\newcommand{\sysn}{\left\{\begin{array}{rcl}}
\newcommand{\sysk}{\end{array}\right.}
\newcommand{\ingrw}[2]{\includegraphics[width=#1mm]{#2}}
\newtheorem{theorem}{Theorem}[section]
\theoremstyle{example}
\theoremstyle{definition}
\newtheorem{definition}[theorem]{Definition}
\journal{Topology and its Applications}
\begin{document}

\begin{frontmatter}



\title{Classification of selectors for sequences of dense sets of Baire functions}


\author{Alexander V. Osipov}

\ead{OAB@list.ru}


\address{Krasovskii Institute of Mathematics and Mechanics, Ural Federal
 University, \\ Ural State University of Economics, Yekaterinburg, Russia}

\begin{abstract} For a Tychonoff space $X$, we denote by $B(X)$
the space of all Baire functions on $X$ with the topology of
pointwise convergence. In this paper we investigate selectors for
sequences of countable dense and countable sequentially dense sets
of the space $B(X)$. We give the characteristics of selection
principles $S_{1}(\mathcal{P},\mathcal{Q})$ and
$S_{fin}(\mathcal{P},\mathcal{Q})$ for $\mathcal{P},\mathcal{Q}\in
\{\mathcal{D}$, $\mathcal{S}$, $\mathcal{A} \}$, where

$\bullet$ $\mathcal{D}$ --- the family of a countable dense
subsets of $B(X)$;

$\bullet$ $\mathcal{S}$ --- the family of a countable sequentially
dense subsets  of $B(X)$;

$\bullet$ $\mathcal{A}$ --- the family of a countable $1$-dense
subsets of $B(X)$, through the selection principles of $X$.

\end{abstract}

\begin{keyword}
 $S_{1}(\mathcal{D},\mathcal{D})$ \sep $S_{fin}(\mathcal{D},\mathcal{D})$ \sep $S_{1}(\mathcal{D},\mathcal{S})$  \sep $S_{fin}(\mathcal{D},\mathcal{S})$ \sep $S_{1}(\mathcal{S},\mathcal{D})$
 \sep $S_{fin}(\mathcal{S},\mathcal{D})$ \sep $S_{1}(\mathcal{S},\mathcal{S})$ \sep
$S_{fin}(\mathcal{S},\mathcal{S})$ \sep $S_1(\mathcal{B}_{\Omega},
\mathcal{B}_{\Gamma})$ \sep $S_1(\mathcal{B}_{\Gamma},
\mathcal{B}_{\Gamma})$ \sep $S_1(\mathcal{B}_{\Omega},
\mathcal{B}_{\Omega})$ \sep $S_{fin}(\mathcal{B}_{\Omega},
\mathcal{B}_{\Omega})$ \sep $S_1(\mathcal{B}_{\Gamma},
\mathcal{B}_{\Omega})$ \sep $S_1(\mathcal{B}, \mathcal{B})$ \sep
$S_1(\mathcal{B}_{\Gamma}, \mathcal{B})$ \sep function spaces \sep
selection principles \sep Gerlits-Nage $\gamma$ property \sep
Baire function \sep $\sigma$-set  \sep $\gamma$-set \sep Scheepers
Diagram \sep sequentially separable


\MSC 37F20 \sep 26A03 \sep 03E75  \sep 54C35

\end{keyword}

\end{frontmatter}



\section{Introduction}

Many topological properties are defined or characterized in terms
 of the following classical selection principles.
 Let $\mathcal{A}$ and $\mathcal{B}$ be sets consisting of
families of subsets of an infinite set $X$. Then:

$S_{1}(\mathcal{A},\mathcal{B})$ is the selection hypothesis: for
each sequence $\{A_{n}: n\in \omega\}$ of elements of
$\mathcal{A}$ there is a sequence $\{b_{n}: n\in\omega\}$ such
that for each $n$, $b_{n}\in A_{n}$, and $\{b_{n}: n\in\omega \}$
is an element of $\mathcal{B}$.

$S_{fin}(\mathcal{A},\mathcal{B})$ is the selection hypothesis:
for each sequence $\{A_{n}: n\in \omega\}$ of elements of
$\mathcal{A}$ there is a sequence $\{B_{n}: n\in \omega\}$ of
finite sets such that for each $n$, $B_{n}\subseteq A_{n}$, and
$\bigcup_{n\in\omega}B_{n}\in\mathcal{B}$.

$U_{fin}(\mathcal{A},\mathcal{B})$ is the selection hypothesis:
whenever $\mathcal{U}_1$, $\mathcal{U}_2, ... \in \mathcal{A}$ and
none contains a finite subcover, there are finite sets
$\mathcal{F}_n\subseteq \mathcal{U}_n$, $n\in \omega$, such that
$\{\bigcup \mathcal{F}_n : n\in \omega\}\in \mathcal{B}$.

\medskip

In this paper, by cover we mean a nontrivial one, that is,
$\mathcal{U}$ is a cover of $X$ if $X=\cup \mathcal{U}$ and
$X\notin \mathcal{U}$.

 A cover $\mathcal{U}$ of a space $X$ is:

 $\bullet$ an {\it $\omega$-cover} if $X$ does not belong to
 $\mathcal{U}$ and every finite subset of $X$ is contained in a
 member of $\mathcal{U}$.

$\bullet$ a {\it $\gamma$-cover} if it is infinite and each $x\in
X$ belongs to all but finitely many elements of $\mathcal{U}$.

For a topological space $X$ we denote:

$\bullet$ $\mathcal{O}$ --- the family of countable open covers of
$X$;

 $\bullet$ $\Omega$ --- the
family of countable open $\omega$-covers of $X$;

$\bullet$ $\Gamma$ --- the family of countable open
$\gamma$-covers of $X$;

$\bullet$ $\mathcal{B}$ --- the family of countable Baire (Borel
for a metrizable $X$) covers of $X$;

$\bullet$ $\mathcal{B}_{\Omega}$ --- the family of countable Baire
 $\omega$-covers of $X$;

$\bullet$ $\mathcal{B}_\Gamma$ --- the family of countable Baire
 $\gamma$-covers of $X$.



\bigskip

Many equivalence hold among these properties, and the surviving
ones appear in the following The extended Scheepers Diagram (where
an arrow denote implication).

\bigskip
{\begin{figure}
\begin{center}

\ingrw{110}{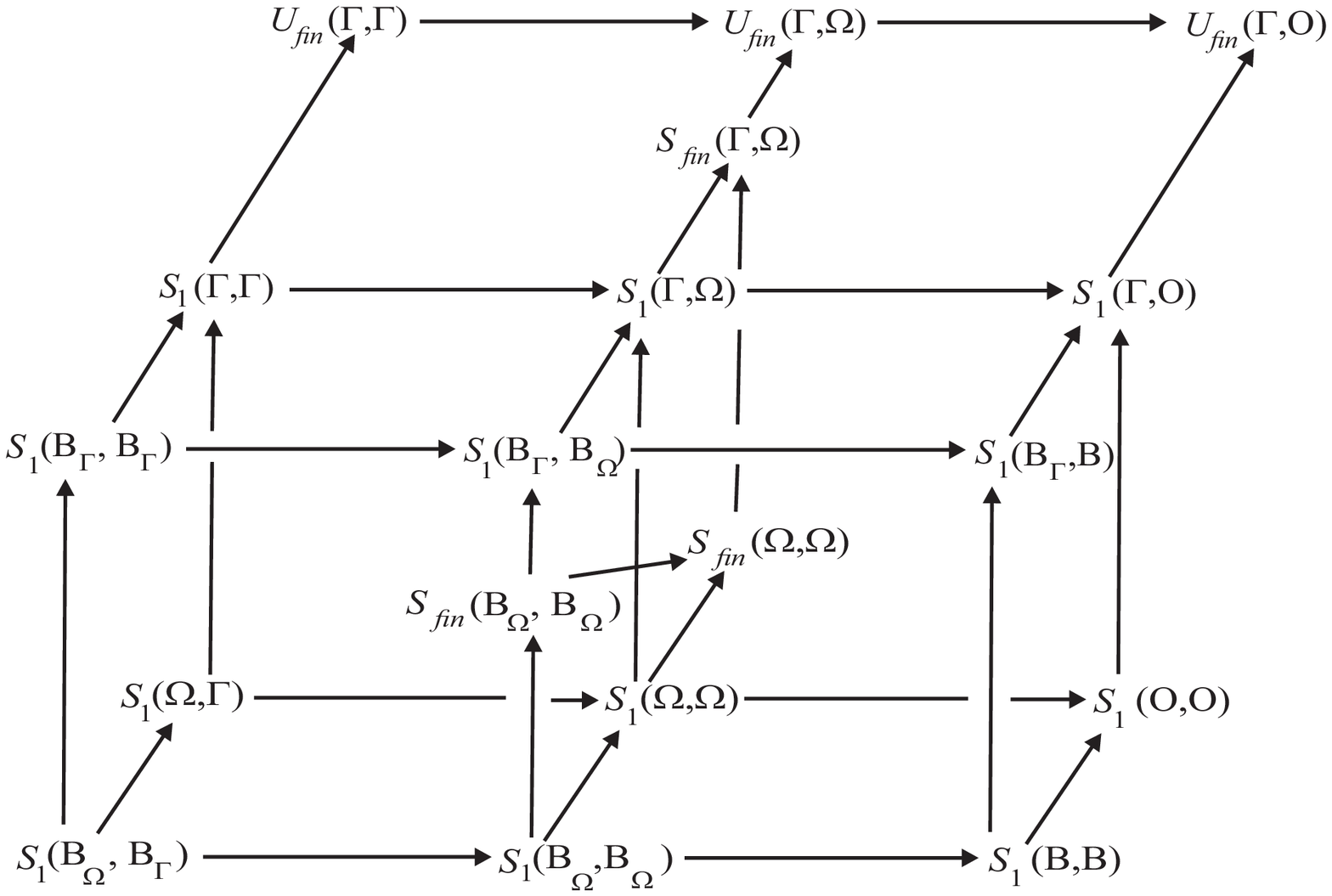}

Figure 1. The extended Scheepers Diagram.

\end{center}

\end{figure}}

 If $X$ is a  space and $A\subseteq X$, then the sequential closure of $A$,
 denoted by $[A]_{seq}$, is the set of all limits of sequences
 from $A$. A set $D\subseteq X$ is said to be sequentially dense
 if $X=[D]_{seq}$. If $D$ is a countable sequentially dense subset
 of $X$ then $X$ call sequentially separable space.

 Call $X$ strongly sequentially separable if $X$ is separable and
 every countable dense subset of $X$ is sequentially dense.
 Clearly, every strongly sequentially separable space is
 sequentially separable, and every sequentially separable space is
 separable.

In paper \cite{osa}, we investigated different selectors for
sequences of dense sets of the space $C_p(X)$ of all real-valued
continuous functions on $X$ with the topology of pointwise
convergence. In this paper we investigate selectors for sequences
of countable dense and countable sequentially dense sets of the
space of all Baire functions, defined on a Tychonoff space $X$. We
give the characteristics of selection principles
$S_{1}(\mathcal{P},\mathcal{Q})$ and
$S_{fin}(\mathcal{P},\mathcal{Q})$ for $\mathcal{P},\mathcal{Q}\in
\{\mathcal{D}$, $\mathcal{S}$, $\mathcal{A} \}$, where

$\bullet$ $\mathcal{D}$ --- the family of a countable dense
subsets of $B(X)$;

$\bullet$ $\mathcal{S}$ --- the family of a countable sequentially
dense subsets  of $B(X)$;

$\bullet$ $\mathcal{A}$ --- the family of a countable $1$-dense
subsets of $B(X)$, through the selection principles of $X$.

\section{Main definitions and notation}

 We will be denoted by

$\bullet$ for any $\alpha\in [0,\omega_1]$, $B_{\alpha}(X)$ a set
of all functions of Baire class $\alpha$, defined
 on a Tychonoff space $X$, provided with the pointwise convergence topology.

In particular:

$\bullet$ for $\alpha=0$, $C_{p}(X)=B_0(X)$ a set of all
real-valued continuous functions $C(X)$ defined
 on a Tychonoff space $X$.

$\bullet$ for $\alpha=1$, $B_1(X)$ a set of all first Baire class
 functions $B_1(X)$ i.e., pointwise limits of continuous functions, defined
 on a Tychonoff space $X$.

$\bullet$ for $\alpha=\omega_1$, $B(X)=B_{\omega_1}(X)$ a  set of
all Baire functions, defined on a Tychonoff space $X$. If $X$ is
metrizable space, then $B(X)$ be called a space of Borel
functions.

Basic open sets of $B_{\alpha}(X)$ are of the form

$[x_1,...,x_k, U_1,...,U_k]=\{f\in B_{\alpha}(X): f(x_i)\in U_i$,
$i=1,...,k\}$, where each $x_i\in X$ and each $U_i$ is a non-empty
open subset of $\mathbb{R}$. Sometimes we will write the basic
neighborhood of the point $f$ as $<f,A,\epsilon>$ where
$<f,A,\epsilon>:=\{g\in B_{\alpha}(X): |f(x)-g(x)|<\epsilon$
$\forall x\in A\}$, $A$ is a finite subset of $X$ and
$\epsilon>0$.

Let $X$ be a topological space, and $x\in X$. A subset $A$ of $X$
{\it converges} to $x$, $x=\lim A$, if $A$ is infinite, $x\notin
A$, and for each neighborhood $U$ of $x$, $A\setminus U$ is
finite. Consider the following collection: $\Omega_x=\{A\subseteq
X : x\in \overline{A}\setminus A\}$ and $\Gamma_x=\{A\subseteq X :
|A|=\aleph_0$ and $x=\lim A\}$. We write $\Pi (\mathcal{A}_x,
\mathcal{B}_x)$ without specifying $x$, we mean $(\forall x) \Pi
(\mathcal{A}_x, \mathcal{B}_x)$.

$\bullet$ A space $X$ has {\it countable fan tightness}
(Arhangel'skii's countable fan tightness), if $X$ $\models$
$S_{fin}(\Omega_x,\Omega_x)$ \cite{arh}.

\medskip

$\bullet$ A space $X$ has {\it countable strong fan tightness}
(Sakai's countable strong fan tightness), if $X$ $\models$
$S_{1}(\Omega_x,\Omega_x)$ \cite{sak}.

\medskip

$\bullet$ A space $X$ has {\it countable selectively sequentially
fan tightness} (Arhangel'skii's property $\alpha_4$), if $X$
$\models$ $S_{fin}(\Gamma_x,\Gamma_x)$ \cite{arh0}.

$\bullet$  A space $X$ has {\it countable strong selectively
sequentially fan tightness} (Arhangel'skii's property $\alpha_2$),
if $X$ $\models$ $S_{1}(\Gamma_x,\Gamma_x)$ \cite{arh0}.

\medskip

$\bullet$ A space $X$ has {\it strictly Fr$\acute{e}$chet-Urysohn
at $x$,} if $X$ $\models$ $S_{1}(\Omega_x,\Gamma_x)$ \cite{sash}.

\medskip

$\bullet$ A space $X$ has {\it almost strictly
Fr$\acute{e}$chet-Urysohn at $x$},  if $X$ $\models$
$S_{fin}(\Omega_x,\Gamma_x)$.

\medskip
$\bullet$ A space $X$ has {\it the weak sequence selection
property}, if $X$ $\models$ $S_{1}(\Gamma_x,\Omega_x)$
\cite{sch4}.

\medskip

$\bullet$  A space $X$ has {\it the sequence selection property},
if $X$ $\models$ $S_{fin}(\Gamma_x,\Omega_x)$.

\medskip
The following implications hold

\medskip
\begin{center}
$S_1(\Gamma_x,\Gamma_x) \Rightarrow S_{fin}(\Gamma_x,\Gamma_x)
\Rightarrow S_1(\Gamma_x,\Omega_x) \Rightarrow
S_{fin}(\Gamma_x,\Omega_x)$ \\  $\Uparrow$ \, \,\, \, \, \, \,
\,\, \, \, \, \, \, $ \Uparrow $ \,\, \, \, \,\, \, \, \, \, \,
$\Uparrow $ \,\, \, \, \,\,\,\, \, \, \, \,\, \, \, $\Uparrow$ \\
$S_1(\Omega_x,\Gamma_x) \Rightarrow S_{fin}(\Omega_x,\Gamma_x)
\Rightarrow S_1(\Omega_x,\Omega_x) \Rightarrow
S_{fin}(\Omega_x,\Omega_x)$

\end{center}

\medskip

We write $\Pi (\mathcal{A}, \mathcal{B}_x)$ without specifying
$x$, we mean $(\forall x) \Pi (\mathcal{A}, \mathcal{B}_x)$.
\medskip

$\bullet$  A space $X$ has {\it countable fan tightness with
respect to countable dense subspaces,} if $X$ $\models$
$S_{fin}(\mathcal{D},\Omega_x)$ (\cite{bbm1}).

$\bullet$ A space $X$ has {\it countable strong fan tightness with
respect to countable dense subspaces,} if $X$ $\models$
$S_{1}(\mathcal{D},\Omega_x)$ (\cite{bbm1}).

$\bullet$ A space $X$ has {\it almost strictly
Fr$\acute{e}$chet-Urysohn at $x$ with respect to countable dense
subspaces}, if $X$ $\models$ $S_{fin}(\mathcal{D},\Gamma_x)$.

$\bullet$ A space $X$ has {\it strictly Fr$\acute{e}$chet-Urysohn
at $x$ with respect to countable dense subspaces}, if $X$
$\models$ $S_{1}(\mathcal{D},\Gamma_x)$.

$\bullet$ A space $X$ has {\it countable selectively sequentially
fan tightness with respect to countable dense subspaces}, if $X$
$\models$ $S_{fin}(\mathcal{S},\Gamma_x)$.

$\bullet$ A space $X$ has {\it countable strong selectively
sequentially fan tightness with respect to countable dense
subspaces}, if $X$ $\models$ $S_{1}(\mathcal{S},\Gamma_x)$.

$\bullet$ A space $X$ has {\it the sequence selection property
with respect to countable dense subspaces}, if $X$ $\models$
$S_{fin}(\mathcal{S},\Omega_x)$.

$\bullet$ A space $X$ has {\it the weak sequence selection
property with respect to countable dense subspaces}, if $X$
$\models$ $S_{1}(\mathcal{S},\Omega_x)$.

\medskip
The following implications hold

\medskip
\begin{center}
$S_1(\mathcal{S},\Gamma_x) \Rightarrow
S_{fin}(\mathcal{S},\Gamma_x) \Rightarrow
S_1(\mathcal{S},\Omega_x) \Rightarrow
S_{fin}(\mathcal{S},\Omega_x)$ \\  \, \, \, $\Uparrow$ \, \, \, \,
\,\,  \, \, \, \, \, $ \Uparrow $ \,\, \, \, \,\, \, \, \, \, \,
$\Uparrow $ \,\, \, \, \,\, \, \,\, \, \, $\Uparrow$
\\ $S_1(\mathcal{D},\Gamma_x) \Rightarrow
S_{fin}(\mathcal{D},\Gamma_x) \Rightarrow
S_1(\mathcal{D},\Omega_x) \Rightarrow
S_{fin}(\mathcal{D},\Omega_x)$

\end{center}

\medskip




\medskip
The following implications hold for countable dense and countable
sequentially dense subsets of topological space.

\medskip
\begin{center}
$S_1(\mathcal{S},\mathcal{S}) \Rightarrow
S_{fin}(\mathcal{S},\mathcal{S}) \Rightarrow
S_1(\mathcal{S},\mathcal{D}) \Rightarrow
S_{fin}(\mathcal{S},\mathcal{D})$ \\  \, \, $\Uparrow$ \, \, \, \,
\,\, \,  \, \, \, $ \Uparrow $ \,\, \, \, \,\, \, \,
\, \, \, $\Uparrow $ \,\, \, \, \,\, \, \,\, \, \, $\Uparrow$ \\
$S_1(\mathcal{D},\mathcal{S}) \Rightarrow
S_{fin}(\mathcal{D},\mathcal{S}) \Rightarrow
S_1(\mathcal{D},\mathcal{D}) \Rightarrow
S_{fin}(\mathcal{D},\mathcal{D})$

\end{center}

\medskip


 We recall that a subset of $X$ that is the
 complete preimage of zero for a certain function from~$C(X)$ is called a zero-set.
A subset $O\subseteq X$  is called  a cozero-set (or functionally
open) of $X$ if $X\setminus O$ is a zero-set.



\medskip
Recall that the $i$-weight $iw(X)$ of a space $X$ is the smallest
infinite cardinal number $\tau$ such that $X$ can be mapped by a
one-to-one continuous mapping onto a Tychonoff space of the weight
not greater than $\tau$.

\begin{theorem} (Noble's Theorem in \cite{nob}) \label{th31} Let $X$ be a Tychonoff space. A space $C_{p}(X)$ is separable if and only if
$iw(X)=\aleph_0$.
\end{theorem}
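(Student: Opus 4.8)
\section*{Proof proposal}

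The plan is to prove both implications directly, the only nontrivial ingredient being a Stone--Weierstrass-type density argument carried out in the topology of pointwise convergence.

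For the ``only if'' direction, suppose $C_{p}(X)$ is separable and fix a countable dense set $D=\{g_n : n\in\omega\}\subseteq C_{p}(X)$. I would form the diagonal map $\Delta\colon X\to\mathbb{R}^{\omega}$ defined by $\Delta(x)=(g_n(x))_{n\in\omega}$, which is continuous since each $g_n$ is. The key point is that $\Delta$ is injective: given $x\neq y$, Tychonoff-ness of $X$ yields $f\in C(X)$ with $f(x)=0$ and $f(y)=1$, so the basic open set $\{h\in C_{p}(X): h(x)<1/2,\ h(y)>1/2\}$ is nonempty and hence meets $D$ in some $g_n$, whence $g_n(x)\neq g_n(y)$ and $\Delta(x)\neq\Delta(y)$. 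Since $\mathbb{R}^{\omega}$ is a Tychonoff space of weight $\le\aleph_0$, so is $\Delta(X)$, and $\Delta$ witnesses that $X$ condenses onto it; therefore $iw(X)\le\aleph_0$, i.e.\ $iw(X)=\aleph_0$.

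For the ``if'' direction, assume $iw(X)=\aleph_0$, so there is a one-to-one continuous map of $X$ onto a Tychonoff space of countable weight; composing with a Tychonoff embedding of that space into $\mathbb{R}^{\omega}$ gives a continuous injection $j=(h_n)_{n\in\omega}\colon X\to\mathbb{R}^{\omega}$ whose coordinate functions $h_n\in C(X)$ separate the points of $X$. I would then let $\mathcal{A}$ be the countable family of all functions of the form $p(h_{i_1},\dots,h_{i_k})$, where $k\in\omega$ and $p$ is a polynomial in $k$ variables with rational coefficients, and show that $\mathcal{A}$ is dense in $C_{p}(X)$. Given $f\in C(X)$, a finite set $\{x_1,\dots,x_m\}\subseteq X$ and $\varepsilon>0$: for each pair $l\neq l'$ some $h_n$ separates $x_l$ and $x_{l'}$, so finitely many of them, say $h_0,\dots,h_N$, already separate $x_1,\dots,x_m$, and hence the points $j_N(x_l):=(h_0(x_l),\dots,h_N(x_l))\in\mathbb{R}^{N+1}$ are pairwise distinct for $l=1,\dots,m$. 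A rational linear form $L$ is injective on this finite set (the bad coefficient vectors form finitely many hyperplanes), so the reals $L(j_N(x_l))$ are distinct; by one-variable Lagrange interpolation there is a polynomial $q$ with $q(L(j_N(x_l)))=f(x_l)$ for all $l$, and perturbing its finitely many coefficients to rationals gives $\tilde q$ with $|\tilde q(L(j_N(x_l)))-f(x_l)|<\varepsilon$ for all $l$. Then $\tilde q\circ L\circ(h_0,\dots,h_N)\in\mathcal{A}$ and is $\varepsilon$-close to $f$ on $\{x_1,\dots,x_m\}$, which proves density, so $C_{p}(X)$ is separable.

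The main obstacle is the density argument in the ``if'' direction: one must exploit that approximation in $C_{p}(X)$ only requires (approximate) agreement on finite subsets of $X$, which is exactly what lets the interpolation-plus-rational-perturbation trick succeed, while point-separation of the family $\{h_n\}$ is precisely what guarantees that the interpolation nodes are distinct. Everything else --- continuity of $\Delta$ and $j$, second countability of $\mathbb{R}^{\omega}$, and the fact that a continuous injection into a space of countable weight witnesses $iw(X)=\aleph_0$ --- is routine.
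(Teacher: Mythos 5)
The paper states this result only as a citation (Noble's theorem) and gives no proof of its own, so there is nothing to compare against; your argument is a correct, self-contained proof and is essentially the classical one: a countable dense subset of $C_p(X)$ yields a point-separating countable family and hence a condensation of $X$ into $\mathbb{R}^{\omega}$, while conversely a condensation onto a second-countable Tychonoff space yields countably many point-separating $h_n\in C(X)$ whose rational polynomial algebra is pointwise dense. Both directions, including the interpolation-plus-rational-perturbation step (a hands-on substitute for the finite-set Stone--Weierstrass argument), check out.
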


\begin{theorem} \label{th30} (Pestrjkov's Theorem in \cite{ps}). Let $X$ be a Tychonoff space
and $0\leq\alpha\leq \omega_1$. A space $B_{\alpha}(X)$ is
separable if and only if $iw(X)=\aleph_0$.
\end{theorem}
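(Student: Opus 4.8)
The plan is to prove the two implications separately; the implication $iw(X)=\aleph_0\Rightarrow B_\alpha(X)$ separable is a short reduction to Noble's Theorem, while the converse is where the work lies. We may assume $X$ is infinite, the finite case being trivial. For the first implication, suppose $iw(X)=\aleph_0$. By Noble's Theorem (Theorem~\ref{th31}) the space $C_p(X)=B_0(X)$ is separable, say with countable dense subset $D$. Since $X$ is Tychonoff, finite subsets of $X$ are closed and for distinct points $x_1,\dots,x_k$ and reals $r_1,\dots,r_k$ there is $f\in C(X)$ with $f(x_i)=r_i$, so $C(X)$ is dense in $\mathbb{R}^X$; as $C(X)\subseteq B_\alpha(X)\subseteq\mathbb{R}^X$, the subspace $C_p(X)$ is already dense in $B_\alpha(X)$, hence $D$ is dense in $B_\alpha(X)$, which is therefore separable. (Thus $C_p(X)$ separable already implies $B_\alpha(X)$ separable.)

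For the converse, suppose $D=\{h_n:n\in\omega\}$ is dense in $B_\alpha(X)$. First I would note that $D$ separates the points of $X$: given $x\ne y$, choose $g\in C(X)$ with $g(x)=0$ and $g(y)=1$; then $\{f\in B_\alpha(X):f(x)<1/2<f(y)\}$ is a neighbourhood of $g$, so it meets $D$, and the witnessing $h_n$ satisfies $h_n(x)\ne h_n(y)$. The difficulty is that the $h_n$ are only Baire functions, not continuous, so this does not yet bound $iw(X)$; the goal is to extract from $\{h_n\}$ a \emph{countable} family of \emph{continuous} functions separating points. For this I would use two facts: every function of Baire class $\le\alpha$ is measurable with respect to the Baire $\sigma$-algebra $\mathrm{Ba}(X)$ (the $\sigma$-algebra generated by the cozero-sets of $X$); and $\mathrm{Ba}(X)$ equals the union of the $\sigma$-algebras $\sigma(\mathcal{E})$ taken over all countable families $\mathcal{E}$ of cozero-sets of $X$. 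Collecting, for each $n$, the witnesses for the countably many sets $h_n^{-1}((q,\infty))$, $q\in\mathbb{Q}$, one obtains a countable family $\mathcal{F}_n$ of cozero-sets such that $h_n$ is $\sigma(\mathcal{F}_n)$-measurable.

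Now set $\mathcal{F}=\bigcup_n\mathcal{F}_n$, a countable family of cozero-sets, write each $C\in\mathcal{F}$ as the cozero-set of some $g_C\in C(X)$, and consider the continuous diagonal map $\Phi=(g_C)_{C\in\mathcal{F}}\colon X\to\mathbb{R}^{\mathcal{F}}$, whose image $Y=\Phi(X)$ is second-countable, hence separable metrizable. The crucial step is that $\Phi$ is injective. If $\Phi(x)=\Phi(x')$, then $x\in C\iff x'\in C$ for every $C\in\mathcal{F}$; for each $n$, the collection of subsets of $X$ saturated under the equivalence relation $x\approx_n x'\iff(\forall C\in\mathcal{F}_n)(x\in C\leftrightarrow x'\in C)$ is a $\sigma$-algebra containing $\mathcal{F}_n$, hence contains $\sigma(\mathcal{F}_n)$, so every $\sigma(\mathcal{F}_n)$-measurable function is constant on each $\approx_n$-class; therefore $h_n(x)=h_n(x')$ for all $n$, and since $D$ separates points, $x=x'$. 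Thus $\Phi$ is a continuous bijection of $X$ onto the second-countable space $Y$, which gives $iw(X)\le\aleph_0$, i.e. $iw(X)=\aleph_0$. I expect the only real obstacle to be the measure-theoretic input above: checking that Baire-class-$\alpha$ functions are $\mathrm{Ba}(X)$-measurable and that a single Baire set already lies in the $\sigma$-algebra generated by countably many cozero-sets. This is precisely what allows a countable Baire-dense set to be ``coded'' by countably many continuous functions, and it is also where the special structure of $B_\alpha(X)$ is used — the analogous statement fails for a general intermediate space $C(X)\subseteq\mathcal{F}\subseteq\mathbb{R}^X$.
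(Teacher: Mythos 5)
The paper does not prove this statement at all --- it is quoted as Pestrjkov's Theorem with a citation to \cite{ps} and used as a black box --- so there is no in-paper argument to compare yours against. On its own merits your proof is correct and complete. The forward direction is exactly the standard reduction: $C_p(X)$ is dense in $\mathbb{R}^X$ for Tychonoff $X$ (finite point-separation by continuous functions), so Noble's Theorem transfers separability upward to every $B_\alpha(X)$. For the converse, the two measure-theoretic inputs you isolate are both standard and hold: Baire-class-$\alpha$ functions are measurable for the $\sigma$-algebra generated by the cozero-sets (transfinite induction, since pointwise limits preserve measurability), and the union of $\sigma(\mathcal{E})$ over countable cozero families $\mathcal{E}$ is itself a $\sigma$-algebra containing all cozero-sets, so each Baire set, and hence each $h_n$ via its sets $h_n^{-1}((q,\infty))$, is captured by countably many cozero-sets. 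The saturation argument for injectivity of the diagonal map $\Phi$ is sound (saturated sets form a $\sigma$-algebra containing $\mathcal{F}_n$, so $\sigma(\mathcal{F}_n)$-measurable functions are constant on $\approx_n$-classes), and $\Phi(X)\subseteq\mathbb{R}^{\mathcal{F}}$ is second countable, giving $iw(X)=\aleph_0$. The only thing I would tidy is the point-separation step at the start of the converse: you should note explicitly that the neighbourhood $\{f: f(x)<1/2<f(y)\}$ is taken in $B_\alpha(X)$ and is nonempty because $g\in C(X)\subseteq B_\alpha(X)$ lies in it --- which you essentially do. No gaps.
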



\medskip







\begin{definition} A space $X$ has {\bf $O$-property} ($X$ $\models$ $O$), if there
 exist  a  bijection $\varphi: X \mapsto Y$ from a space $X$ onto a
 separable metrizable space $Y$, such that

\begin{enumerate}

\item $\varphi^{-1}(U)$ --- Baire set of $X$ for any open set $U$
of $Y$;

\item  $\varphi(T)$ --- $F_{\sigma}$-set of $Y$ for any Baire set
$T$ of $X$.

\end{enumerate}

\end{definition}

By Corollary 4.4, Corollary 4.5 and Corollary 4.6 in \cite{os1},
we have

\begin{theorem}\label{th37} (Osipov) For each Tychonoff space $X$ the
following are equivalent:

\begin{enumerate}

\item $B(X)$ is sequentially separable;

\item $X$ $\models$ $O$;

\item  $\exists$  a Baire isomorphism $f:X\mapsto M$ of class
$\alpha$ from a space $X$ onto
 a $\sigma$-set $M$ for some $\alpha\in (1,\omega_1)$.

\end{enumerate}

 \end{theorem}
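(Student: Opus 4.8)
The plan is to prove $(2)\Rightarrow(1)\Rightarrow(2)$ together with $(2)\Leftrightarrow(3)$, everything resting on one device: for a bijection $\varphi\colon X\to Y$ of $X$ onto a separable metrizable $Y$, the map $\Phi\colon g\mapsto g\circ\varphi$ is a homeomorphism of $\mathbb{R}^{Y}$ onto $\mathbb{R}^{X}$, so one only has to track which subclasses of functions correspond under $\Phi$. Write $\mathcal{B}_{X}$ for the Baire $\sigma$-algebra of $X$ and $\mathrm{Bor}(Y)$ for the Borel $\sigma$-algebra of $Y$ (so $B(Y)$ is exactly the space of $\mathrm{Bor}(Y)$-measurable real functions). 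I will use: $B(X)\supseteq C(X)$ separates points; a map into a separable metrizable space is Baire/Borel measurable iff preimages of open sets lie in the corresponding $\sigma$-algebra; and the classical Lebesgue--Hausdorff fact that, for metrizable $Z$, a real function is of Baire class $1$ iff preimages of open sets are $F_{\sigma}$ --- so $Z$ is a $\sigma$-set iff $B_{1}(Z)=B(Z)$. Note also that clauses (1) and (2) of the $O$-property jointly give $\mathcal{B}_{X}=\varphi^{-1}(\mathrm{Bor}(Y))$ (``$\subseteq$'' because (1) makes preimages of a base Baire, ``$\supseteq$'' from (2)), hence $\varphi(\mathcal{B}_{X})=\mathrm{Bor}(Y)$ as $\varphi$ is a bijection, and then (2) says $\mathrm{Bor}(Y)=F_{\sigma}(Y)$; thus $X\models O$ already entails that the target $Y$ is a $\sigma$-set.

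\smallskip $(1)\Rightarrow(2)$. Fix a countable sequentially dense $D=\{d_{n}:n\in\omega\}\subseteq B(X)$. As $D$ is dense and $C(X)$ separates points, $D$ separates points, so $e\colon X\to\mathbb{R}^{\omega}$, $e(x)=(d_{n}(x))_{n}$, is injective; put $Y:=e(X)$ and $\varphi:=e$. Each $d_{n}$ being a Baire function, $e^{-1}$ of an open set is a Baire set of $X$ --- this is (1), and it gives $e^{-1}(\mathrm{Bor}(Y))\subseteq\mathcal{B}_{X}$, so $\Phi(B(Y))\subseteq B(X)$. Now use sequential density: any $F\in B(X)$ is the pointwise limit of a sequence $(d_{m_{k}})_{k}$ from $D$, hence $F(x)=\lim_{k}\pi_{m_{k}}(e(x))$, i.e. $F=G_{F}\circ e$ with $G_{F}:=\lim_{k}(\pi_{m_{k}}|_{Y})$ defined on all of $Y$ and lying in $B_{1}(Y)$. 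So every Baire function on $X$, in particular every member of $C(X)$, factors through $e$ by a Baire class $1$ function on $Y$; this yields both $\mathcal{B}_{X}=e^{-1}(\mathrm{Bor}(Y))$ and $\Phi(B_{1}(Y))=B(X)$. Since $B_{1}(Y)\subseteq B(Y)$, injectivity of $\Phi$ then forces $B_{1}(Y)=B(Y)$, so $Y$ is a $\sigma$-set; hence $F_{\sigma}(Y)=\mathrm{Bor}(Y)$, and for a Baire set $T=e^{-1}(B)$ of $X$ we get $\varphi(T)=B\in F_{\sigma}(Y)$. Therefore $X\models O$.

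\smallskip $(2)\Leftrightarrow(3)$. $(3)\Rightarrow(2)$ is immediate: a Baire isomorphism $f$ of class $\alpha$ onto a $\sigma$-set $M$ verifies both clauses of $O$ with $\varphi=f$, since $f$ is Baire measurable (so preimages of open sets are Baire) and $f^{-1}$ is Baire measurable (so $\varphi(T)=(f^{-1})^{-1}(T)$ is Borel in $M$, hence $F_{\sigma}$ because $M$ is a $\sigma$-set). For $(2)\Rightarrow(3)$ we already know $Y$ is a $\sigma$-set with $\mathcal{B}_{X}=\varphi^{-1}(\mathrm{Bor}(Y))$; fixing a countable base $\{V_{n}\}$ of $Y$, each $\varphi^{-1}(V_{n})$ occurs at some countable level $\beta_{n}$ of the Baire hierarchy of $X$, so $\varphi$ is of Baire class at most $\alpha:=1+\sup_{n}\beta_{n}<\omega_{1}$, while $\varphi^{-1}$ sends cozero sets of $X$ to $F_{\sigma}$-subsets of $Y$ and so is of Baire class at most $1$. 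Replacing $\alpha$ by $\max\{\alpha,2\}$, $\varphi$ witnesses (3) with $M:=Y$ and this $\alpha\in(1,\omega_{1})$.

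\smallskip It remains to prove $(2)\Rightarrow(1)$, which I expect to be the main obstacle. By the map $\Phi$, clauses (1) and (2) make $\Phi$ restrict to a homeomorphism $B(Y)\cong B(X)$ (clause (1) for $\Phi(B(Y))\subseteq B(X)$, clause (2) for the reverse), so it suffices to show that $B(Y)=B_{1}(Y)$ is sequentially separable whenever $Y$ is a separable metrizable $\sigma$-set. A natural candidate for a countable sequentially dense subset is the $\mathbb{Q}$-subalgebra-and-sublattice of $B_{1}(Y)$ generated by the rationals and the characteristic functions of a countable base of $Y$; the difficulty --- and this is the real crux of the theorem --- is that such a set can only be made pointwise, never uniformly, dense, so one cannot diagonalize iterated limits ``at all points simultaneously.'' One must instead construct directly, for each Borel $g$ on $Y$, a single approximating sequence from the generating family, and here the full strength of $Y$ being a $\sigma$-set (every Borel set being at once $F_{\sigma}$ and $G_{\delta}$, hence of very low Borel complexity, so that each $\mathbf{\Delta}^{0}_{2}$ piece of a rational simple function approximating $g$ is itself a genuine one-step pointwise limit from the base) has to be exploited, together with a final diagonalization over truncations of the range of $g$. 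Showing that all of this collapses to one sequence is the technical heart of the matter, and is presumably the content packaged into Corollaries 4.4--4.6 of \cite{os1}.
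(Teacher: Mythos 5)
First, note that the paper does not actually prove Theorem~\ref{th37}: it imports it wholesale from Corollaries 4.4--4.6 of \cite{os1}, so there is no internal argument to compare yours against. Your reconstruction of $(1)\Rightarrow(2)$ and of $(2)\Leftrightarrow(3)$ is sound and nicely organized around the transfer map $\Phi\colon g\mapsto g\circ\varphi$: the evaluation map $e$ built from a sequentially dense countable $D$, the factorization $F=G_F\circ e$ with $G_F\in B_1(Y)$, and the deduction $B_1(Y)=B(Y)$ (hence $Y$ is a $\sigma$-set, via characteristic functions and Lebesgue--Hausdorff) are all correct modulo the standard identification of Baire functions with Baire-measurable functions on a Tychonoff space, which you use implicitly and should state once.

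The genuine gap is exactly where you flag it: $(2)\Rightarrow(1)$. Your reduction is fine --- the two clauses of the $O$-property make $\Phi$ a homeomorphism of $B(Y)$ onto $B(X)$, and since $Y$ is a $\sigma$-set every Borel function on $Y$ has $F_\sigma$ preimages of open sets and so lies in $B_1(Y)$ --- but the resulting statement, that $B_1(Y)$ is sequentially separable for a separable metrizable $Y$, is never proved. The final paragraph is a description of a strategy (a countable $\mathbb{Q}$-lattice-algebra generated by characteristic functions of a base, low Borel complexity of the pieces, diagonalization over truncations) together with an explicit admission that ``showing that all of this collapses to one sequence is the technical heart of the matter.'' That heart is precisely the content of the cited corollaries of \cite{os1} (in essence, the Osipov--Pytkeev-type theorem that $B_1(Y)$ is sequentially separable for every separable metrizable $Y$), and without it the equivalence $(2)\Leftrightarrow(1)$ is only half established. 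So the proposal should be regarded as a correct and useful reduction of the theorem to that one external fact, not as a complete proof; to close it you would need to exhibit, for each Baire class~1 function $g$ on $Y$, a single sequence from a fixed countable family converging pointwise to $g$, which is a nontrivial construction and cannot be waved through by density alone (pointwise density does not permit diagonalizing iterated limits, as you yourself observe).
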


Recall that the cardinal $\mathfrak{p}$ is the smallest cardinal
so that there is a collection of $\mathfrak{p}$ many subsets of
the natural numbers with the strong finite intersection property
but no infinite pseudo-intersection. Note that $\omega_1 \leq
\mathfrak{p} \leq \mathfrak{c}$.

For $f,g\in \mathbb{N}^{\mathbb{N}}$, let $f\leq^{*} g$ if
$f(n)\leq g(n)$ for all but finitely many $n$. $\mathfrak{b}$ is
the minimal cardinality of a $\leq^{*}$-unbounded subset of
$\mathbb{N}^{\mathbb{N}}$. A set $B\subset [\mathbb{N}]^{\infty}$
is unbounded if the set of all increasing enumerations of elements
of $B$ is unbounded in $\mathbb{N}^{\mathbb{N}}$, with respect to
$\leq^{*}$. It follows that $|B|\geq \mathfrak{b}$. A subset $S$
of the real line is called a $Q$-set if each one of its subsets is
a $G_{\delta}$. The cardinal $\mathfrak{q}$ is the smallest
cardinal so that for any $\kappa< \mathfrak{q}$ there is a $Q$-set
of size $\kappa$. (See \cite{do} for more on small cardinals
including $\mathfrak{p}$).

\medskip
For a collection $\mathcal{J}$ of spaces, let $non(\mathcal{J})$
denote the minimal cardinality for a space which is not a member
of $\mathcal{J}$.

\section{$B(X)$ $\models$ $S_1(\mathcal{D},\mathcal{D})$}

\begin{theorem}\label{th24} For a Tychonoff space $X$, the
following are equivalent:

\begin{enumerate}

\item $B(X)$  $\models$ $S_{1}(\Omega_x,\Omega_x)$;

\item $X$ $\models$ $S_{1}(\mathcal{B}_{\Omega},
\mathcal{B}_{\Omega})$.

\end{enumerate}

\end{theorem}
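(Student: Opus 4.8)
The approach is to transport to Baire functions and Baire covers the classical Arhangel'skii--Sakai duality that characterises $S_1(\Omega_x,\Omega_x)$ for a function space, running parallel to the $C_p(X)$ treatment in \cite{osa}. Since $B(X)$ is a topological group under pointwise addition it is homogeneous, so $(1)$ is equivalent to $B(X)\models S_1(\Omega_{\mathbf 0},\Omega_{\mathbf 0})$ at the zero function $\mathbf 0$; recall that $A\in\Omega_{\mathbf 0}$ means $\mathbf 0\notin A$ and every basic neighbourhood $\langle\mathbf 0,F,\epsilon\rangle$ of $\mathbf 0$ meets $A$. The bridge between the two sides is the dictionary: given a countable Baire $\omega$-cover $\mathcal U=\{U_m:m\in\omega\}$ of $X$ (so each $U_m\neq X$), the set $A_{\mathcal U}:=\{\chi_{X\setminus U_m}:m\in\omega\}$ belongs to $\Omega_{\mathbf 0}$, because each $\chi_{X\setminus U_m}$ is a Baire function $\neq\mathbf 0$ and $\chi_{X\setminus U_m}$ vanishes on $F$ whenever $F\subseteq U_m$; conversely, for a countable $A=\{f_m:m\in\omega\}\in\Omega_{\mathbf 0}$ and $k\ge 1$ the family $\{\{x\in X:|f_m(x)|<1/k\}:m\in\omega\}$ consists of Baire sets and, after deleting any member equal to $X$, is a countable Baire $\omega$-cover of $X$.

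For $(1)\Rightarrow(2)$: given $\mathcal U_n\in\mathcal B_\Omega$ for $n\in\omega$, apply $B(X)\models S_1(\Omega_{\mathbf 0},\Omega_{\mathbf 0})$ to the sequence $(A_{\mathcal U_n})_n$ to obtain $g_n=\chi_{X\setminus U_n}$ with $U_n\in\mathcal U_n$ and $\{g_n:n\in\omega\}\in\Omega_{\mathbf 0}$. For each finite $F\subseteq X$ the neighbourhood $\langle\mathbf 0,F,1/2\rangle$ contains some $g_n$; as $g_n$ is $\{0,1\}$-valued this forces $g_n\equiv 0$ on $F$, i.e. $F\subseteq U_n$. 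Since also $U_n\neq X$, the countable family $\{U_n:n\in\omega\}$ is a Baire $\omega$-cover of $X$, which is $(2)$.

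For $(2)\Rightarrow(1)$, the harder half: first reduce to $A_n$ countable, say $A_n=\{f^n_m:m\in\omega\}$, which is legitimate because $(2)$ entails countable fan tightness, hence countable tightness, of $B(X)$. Split $\omega$ into $N_0=\{n:\inf_{f\in A_n}\sup_X|f|=0\}$ and $N_1=\omega\setminus N_0$; for $n\in N_0$ choose $a_n\in A_n$ with $\sup_X|a_n|<1/(n+1)$ directly, as such functions already converge to $\mathbf 0$. For $n\in N_1$ fix $\delta_n>0$ below $\inf_{f\in A_n}\sup_X|f|$ and a precision $0<\epsilon_n\le\delta_n$ with $\epsilon_n\to 0$ along $N_1$; then $\mathcal V_n:=\{\{x:|f(x)|<\epsilon_n\}:f\in A_n\}$ is a countable Baire $\omega$-cover of $X$ (the bound $\epsilon_n\le\delta_n$ keeps every member proper). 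Applying $X\models S_1(\mathcal B_\Omega,\mathcal B_\Omega)$ to $(\mathcal V_n)_{n\in N_1}$ yields $g_n\in A_n$ for $n\in N_1$ with $\mathcal C:=\{\{x:|g_n(x)|<\epsilon_n\}:n\in N_1\}\in\mathcal B_\Omega$. Here one uses the elementary observation that if an $\omega$-cover meets a finite set in only finitely many of its members then those members already cover $X$; arranging the $\epsilon_n$ and the choice of functions so that $\mathcal C$ avoids this, i.e. so that every finite $F\subseteq X$ lies in infinitely many members of $\mathcal C$, one gets for each finite $F$ and each $\eta>0$ some $n\in N_1$ with $\epsilon_n<\eta$ and $F\subseteq\{|g_n|<\epsilon_n\}$, hence $|g_n|<\eta$ on $F$; thus $\mathbf 0\in\overline{\{g_n:n\in N_1\}}$. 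Putting $a_n:=g_n$ for $n\in N_1$, the sequence $\{a_n:n\in\omega\}$ clusters at $\mathbf 0$ and avoids $\mathbf 0$, i.e. lies in $\Omega_{\mathbf 0}$, which is $(1)$.

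The main obstacle is the last step of the $(2)\Rightarrow(1)$ argument: organising the construction so that the $\mathcal V_n$ are genuine $\omega$-covers and, above all, so that the $\omega$-cover $\mathcal C$ returned by the selection principle has every finite subset of $X$ inside infinitely many of its members (equivalently, so that a suitable small-precision tail of $\mathcal C$ is still an $\omega$-cover), which is exactly what allows one to read off a set clustering at $\mathbf 0$. This, together with the peeling-off of the degenerate indices $N_0$ and the reduction of the $A_n$ to countable sets, is the bookkeeping; it is carried out exactly as for $C_p(X)$ in \cite{osa}, with characteristic functions of Baire sets in place of continuous functions subordinate to cozero covers.
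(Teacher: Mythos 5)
Your proof is correct and follows essentially the same route as the paper's: the same dictionary between countable Baire $\omega$-covers of $X$ and countable sets clustering at a point of $B(X)$ (you use characteristic functions and the point $\mathbf 0$ where the paper uses functions vanishing off a member of the cover and the constant function $1$ --- immaterial), and the same case analysis in the converse direction on whether the sets $\{x:|f(x)|<\epsilon\}$ can equal $X$, finished by the standard observation that in a nontrivial $\omega$-cover every finite set lies in infinitely many members. The one caveat is your justification of the reduction to countable $A_n$ via ``$(2)$ entails countable fan tightness, hence countable tightness'': as stated this is circular, since that entailment is the $S_{fin}$ analogue of the very implication being proved; the paper performs the same reduction silently, so this is a shared informality rather than a defect peculiar to your argument, but it would deserve an independent justification (or a convention that the local principles range over countable sets).
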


\begin{proof} We prove similarly the proof of Theorem 1 in \cite{sak} for $C_p(X)$.

$(1)\Rightarrow(2)$. Let $\{\mathcal{U}_n\}_{n\in \omega}$ be a
sequence of Baire $\omega$-covers of $X$. We set $A_n=\{f\in B(X):
f\upharpoonright (X\setminus U)=0$ for some $U\in \mathcal{U}_n
\}$. It is not difficult to see that each $A_n$ is dense in $B(X)$
since each $\mathcal{U}_n$ is an $\omega$-cover of $X$ and $X$ is
Tychonoff. Let $f$ be the constant function to 1. By the
assumption there exist $f_n\in A_n$ such that $f\in
\overline{\{f_n: n\in \omega\}}$. For each $f_n$ we take $U_n\in
\mathcal{U}_n$ such that $f_n\upharpoonright(X\setminus U_n)=0$.
Set $\mathcal{U}=\{U_n: n\in \omega\}$. For each finite subset
$\{x_1,...,x_k\}$ of $X$ we consider the basic open neighborhood
of $f$ $[x_1,...,x_k; W,..., W]$, where $W=(0,2)$. Note that
$[x_1,...,x_k; W,..., W]$ contains some $f_n$. This means
$\{x_1,...,x_k\}\subset U_n$. Consequently $\mathcal{U}$ is an
$\omega$-cover of $X$.

$(2)\Rightarrow(1)$. Let $f\in \bigcap\limits_n\overline{A_n}$,
where $A_n$ is a subset of $B(X)$. Since $B(X)$ is homogeneous, we
may think that $f$ is the constant function to the zero. We set
$\mathcal{U}_n=\{g^{-1}(-1/n, 1/n) : g\in A_n\}$ for each $n\in
\omega$. For each $n\in \omega$ and each finite subset $\{x_1,
x_2,..., x_k\}$ of $X$ a neighborhood $[x_1,...,x_k; W,..., W]$ of
$f$, where $W=(-1/n, 1/n)$, contains some $g\in A_n$. This means
that each $\mathcal{U}_n$ is the Baire $\omega$-cover of $X$. In
case the set $M=\{n\in \omega: X\in \mathcal{U}_n \}$ is infinite,
choose $g_{m}\in A_m$ $m\in M$ so that $g^{-1}(-1/m, 1/m)=X$, then
$g_m \mapsto f$. So we may assume that there exists $n\in \omega$
such that for each $m\geq n$ and $g\in A_m$ $g^{-1}(-1/m, 1/m)$ is
not $X$. For the sequence $\{\mathcal{U}_m : m\geq n\}$ of Baire
$\omega$-covers there exist $f_m\in A_m$ such that
$\mathcal{U}=\{f^{-1}_m(-1/m,1/m): m\geq n\}$ is Baire
$\omega$-cover of $X$. Let $[x_1,...,x_k; W,...,W]$ be any basic
open neighborhood of $f$, where $W=(-\epsilon, \epsilon)$,
$\epsilon>0$. There exists $m\geq n$ such that
$\{x_1,...,x_k\}\subset f^{-1}_m(-1/m, 1/m)$ and $1/m<\epsilon$.
This means $f\in \overline{\{f_m: m\geq n\}}$.

\end{proof}

\begin{theorem}\label{th22} For a Tychonoff space $X$, the
following are equivalent:

\begin{enumerate}

\item $B(X)$ $\models$ $S_{1}(\mathcal{D},\mathcal{D})$ and it is
separable;

\item $X$ $\models$ $S_{1}(\mathcal{B}_{\Omega},
\mathcal{B}_{\Omega})$ and $iw(X)=\aleph_0$;

\item $B(X)$ $\models$ $S_{1}(\Omega_x, \Omega_x)$ and it is
separable;

\item $B(X)$ $\models$ $S_{1}(\mathcal{D}, \Omega_x)$ and it is
 separable.

\end{enumerate}

\end{theorem}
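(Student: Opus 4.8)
The plan is to establish the cycle of equivalences $(1)\Rightarrow(2)\Rightarrow(3)\Rightarrow(4)\Rightarrow(1)$, using Theorem~\ref{th24} as the main engine connecting the covering properties of $X$ with the tightness-type properties of $B(X)$, and Pestrjkov's Theorem (Theorem~\ref{th30}) to handle the separability side-condition.

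\medskip
\textbf{$(1)\Rightarrow(2)$ and $(3)\Rightarrow(2)$.} First I would record that separability of $B(X)$ is equivalent to $iw(X)=\aleph_0$ by Theorem~\ref{th30} (with $\alpha=\omega_1$), so the separability clause transfers freely between all four statements and only needs to be carried along. For the selection part, the key observation is that when $B(X)$ is separable it has a countable dense subset, so $\mathcal{D}\neq\varnothing$; moreover every $A\in\mathcal{D}$ is in particular a subset witnessing $f\in\overline{A}$ for every $f$, i.e. $A\in\Omega_f$ for each $f\in B(X)$ that is not isolated (and $B(X)$ has no isolated points). Hence $S_1(\mathcal{D},\mathcal{D})\Rightarrow S_1(\mathcal{D},\Omega_x)$ trivially, and conversely a $\Omega_x$-selection for a single well-chosen $x$ together with homogeneity of $B(X)$ and a diagonal/countable-dense-base argument upgrades an $\Omega_x$-selection to a $\mathcal{D}$-selection. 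Combined with Theorem~\ref{th24}, which says $B(X)\models S_1(\Omega_x,\Omega_x)$ iff $X\models S_1(\mathcal{B}_\Omega,\mathcal{B}_\Omega)$, this gives the implications into $(2)$. Concretely: $(3)$ plus Theorem~\ref{th24} gives $X\models S_1(\mathcal{B}_\Omega,\mathcal{B}_\Omega)$ and $iw(X)=\aleph_0$, which is $(2)$; and $(1)\Rightarrow(3)$ because $S_1(\mathcal{D},\mathcal{D})\Rightarrow S_1(\mathcal{D},\Omega_x)$ and, using that a countable dense set is a member of $\Omega_x$, one deduces $S_1(\Omega_x,\Omega_x)$ restricted to countable elements, which suffices here since $B(X)$ is separable (every element of $\Omega_x$ contains a countable element of $\Omega_x$ once the space is separable and has countable $\pi$-weight-like behaviour for these functions).

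\medskip
\textbf{$(2)\Rightarrow(3)$ and $(3)\Rightarrow(4)$.} The implication $(2)\Rightarrow(3)$ is immediate from Theorem~\ref{th24} together with Theorem~\ref{th30}: $X\models S_1(\mathcal{B}_\Omega,\mathcal{B}_\Omega)$ gives $B(X)\models S_1(\Omega_x,\Omega_x)$, and $iw(X)=\aleph_0$ gives separability of $B(X)$. The implication $(3)\Rightarrow(4)$ is the easy ``restrict the source family'' direction: every countable dense subset $D$ of $B(X)$ satisfies $D\in\Omega_x$ for every $x\in B(X)$ (since $D$ is dense and $B(X)$ is crowded), so a selection witnessing $S_1(\Omega_x,\Omega_x)$ applied to a sequence $\{D_n\}\subseteq\mathcal{D}$ produces a sequence converging appropriately to $x$, i.e. $S_1(\mathcal{D},\Omega_x)$; separability is unchanged.

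\medskip
\textbf{$(4)\Rightarrow(1)$.} This is the main obstacle and the heart of the argument. Here I would mimic the corresponding step in Sakai's proof (Theorem~1 of~\cite{sak}) and in \cite{osa}: given a sequence $\{D_n\}_{n\in\omega}$ of countable dense subsets of $B(X)$, I must select $g_n\in D_n$ so that $\{g_n:n\in\omega\}$ is \emph{dense} in $B(X)$, not merely converging to one point. The standard device is to split $\omega$ into infinitely many infinite pieces $\omega=\bigsqcup_k N_k$, fix a countable dense set $\{h_k:k\in\omega\}$ of $B(X)$ (available by separability, i.e. by $iw(X)=\aleph_0$ via Theorem~\ref{th30}), and on each block $N_k$ apply $S_1(\mathcal{D},\Omega_{h_k})$ — after translating by homogeneity to put $h_k$ at a fixed point — to the subsequence $\{D_n:n\in N_k\}$ to get $g_n\in D_n$ with $\{g_n:n\in N_k\}\in\Omega_{h_k}$, hence $h_k\in\overline{\{g_n:n\in N_k\}}$. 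Then $\{g_n:n\in\omega\}\supseteq\bigcup_k\{g_n:n\in N_k\}$ has each $h_k$ in its closure, so its closure contains the dense set $\{h_k\}$ and is therefore all of $B(X)$; thus $B(X)\models S_1(\mathcal{D},\mathcal{D})$, and separability is given. The only subtlety to check carefully is that homogeneity of $B(X)$ (translation by a fixed Baire function is a self-homeomorphism, since $B(X)$ is a topological group under pointwise addition) legitimately moves an arbitrary countable dense set to another countable dense set and moves $\Omega_{h_k}$ to $\Omega_0$, so that the single-point property $S_1(\mathcal{D},\Omega_x)$ can be invoked block-by-block with a uniform target; this is routine but must be stated.
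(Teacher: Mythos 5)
Your implications $(2)\Rightarrow(3)$ (via Theorems~\ref{th24} and~\ref{th30}), $(3)\Rightarrow(4)$, and $(4)\Rightarrow(1)$ are sound, and the block-decomposition argument you give for $(4)\Rightarrow(1)$ is exactly the device the paper itself uses. The problem is the step you use to close the cycle, namely $(1)\Rightarrow(3)$. You claim that from $S_{1}(\mathcal{D},\Omega_x)$ one "deduces $S_{1}(\Omega_x,\Omega_x)$ restricted to countable elements, using that a countable dense set is a member of $\Omega_x$." That inclusion points the wrong way: $\mathcal{D}\subseteq\Omega_x$ makes $S_{1}(\Omega_x,\Omega_x)$ the \emph{stronger} hypothesis, so it yields $S_{1}(\Omega_x,\Omega_x)\Rightarrow S_{1}(\mathcal{D},\Omega_x)$ and nothing in the converse direction. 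A countable $A\in\Omega_x$ need not be dense, and there is no generic way to enlarge it to a dense set while guaranteeing that the selected elements can be pulled back into $A$. Your parenthetical about countable tightness only reduces arbitrary members of $\Omega_x$ to countable ones; it does not address the dense-versus-non-dense obstruction. As written, the cycle does not close.

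The paper closes it by a genuinely different and substantive argument for $(1)\Rightarrow(2)$: given countable Baire $\omega$-covers $\mathcal{B}_i=\{W^j_i\}$ of $X$, it takes a condensation of $X$ onto a separable metrizable space, refines that metrizable topology by declaring all the sets $W^j_i$ and their complements open, and obtains a separable metrizable space $Z$ on the same underlying set in which each $\mathcal{B}_i$ becomes an open $\omega$-cover; since $C_p(Z)$ is dense in $B(Z)$, which is dense in $B(X)$, the property $S_{1}(\mathcal{D},\mathcal{D})$ passes down to $C_p(Z)$, and the known $C_p$-theory result (Theorem 3.3 of \cite{os1}) then gives $Z\models S_{1}(\Omega,\Omega)$, hence the required selection from the original Baire covers. (Alternatively, one could argue directly from $(4)$: form the sets $A_n=\{f\in B(X): f\upharpoonright(X\setminus U)=0 \text{ for some } U\in\mathcal{U}_n\}$ as in Theorem~\ref{th24}, verify using $iw(X)=\aleph_0$ that each $A_n$ contains a \emph{countable} dense subset of $B(X)$, and apply $S_{1}(\mathcal{D},\Omega_x)$ at the constant function $1$.) Some argument of this kind must be supplied; the direct "upgrade" from $S_{1}(\mathcal{D},\Omega_x)$ to $S_{1}(\Omega_x,\Omega_x)$ that your proposal relies on is not available.
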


\begin{proof} $(1)\Rightarrow(2)$. Let $X$ be a Tychonoff space
satisfying the hypotheses. By Pestrjkov's Theorem \ref{th30},
$iw(X)=\aleph_0$. Consider a condensation (one-to-one continuous
mapping) $g: X\mapsto Z$ from a space $X$ onto a separable
metrizable space $Z$. Let $\beta$ be a countable base of $Z$.
Denote $g^{-1}(\beta):=\{g^{-1}(B): B\in \beta\}$.

 Let $\{\mathcal{B}_i\}_{i\in \omega}$ be a sequence
of countable Borel $\omega$-covers of $X$ where
$\mathcal{B}_i=\{W^{j}_{i}\}_{j\in \omega}$ for each $i\in
\omega$.

Consider a topology $\tau$ generated by the family
$\mathcal{P}=\{W^{j}_{i}\cap A_s: i,j\in \omega$ and $A_s\in
g^{-1}(\beta)\}\bigcup \{(X\setminus W^{j}_{i})\cap A_s: i,j\in
\omega$ and $A_s\in g^{-1}(\beta)\} $.

Note that if $\chi_{P}$ is  a characteristic function of $P$ for
each $P\in \mathcal{P}$, then a diagonal mapping
$\varphi=\Delta_{P\in \mathcal{P}} \chi_{P} : X\mapsto 2^{\omega}$
is a Baire bijection onto $Z=\varphi(X)\subset 2^{\omega}$.
 A space $Z=(X,\tau)$ is a separable metrizable space. Note that
 $\varphi(\mathcal{B}_i)=\{\varphi(B): B\in \mathcal{B}_i \}$ is countable open $\omega$-cover of $Z$
 for each $i\in \omega$. Since $B(\varphi(X))=B(Z)$ is a dense subset of $B(X)$, then
  $B(Z)$ also has property $S_{1}(\mathcal{D},\mathcal{D})$. Since
  $C_p(Z)$ is a dense subset of $B(Z)$, $C_p(Z)$ also has property $S_{1}(\mathcal{D},\mathcal{D})$.

  By  Theorem 3.3 in \cite{os1}, the space $Z$ has property $S_{1}(\Omega,
  \Omega)$. It follows that there is a sequence $\{W^{j(i)}_{i}\}_{i\in
  \omega}$ such that $W^{j(i)}_{i}\in \mathcal{B}_i$ and $\{\varphi(W^{j(i)}_{i}): i\in
  \omega\}$ is open $\omega$-cover of $Z$. It follows that $\{W^{j(i)}_{i}: i\in
  \omega\}$ is Baire $\omega$-cover of $X$.

$(2)\Rightarrow(1)$. Assume that $X$ has property
$S_{1}(B_{\Omega}, B_{\Omega})$ and $iw(X)=\aleph_0$. By
Pestrjkov's Theorem, $B(X)$ is a separable space.

 Let $\{D_i\}_{i\in \omega}$ be a sequence countable dense subsets of
$B(X)$. We claim that for any $f\in B(X)$ there is a sequence
$\{f_k\}\subset B(X)$ such that $f_k\in D_k$ for each $k\in
\omega$ and $f\in \overline{\{f_k : k\in \omega\}}$. Without loss
 of generality we can assume $f=\bf{0}$. Denote $W^{k}_i=\{ x\in X
 : -\frac{1}{k}<f^k_i(x)<\frac{1}{k} \}$ for each $f^k_i\in D_k=\{f^k_i : i\in \omega\}$ and
 $k\in \omega$.

If for each $j\in \omega$ there is $k(j)$ such that
$W^{k(j)}_{i(j)}=X$, then a sequence $f_{k(j)}=f^{k(j)}_{i(j)}$
uniform convergence to $f$ and, hence, $f\in \overline{\{f_{k(j)}
: j\in \omega\}}$.

We can assume that $W^{k}_i\neq X$ for any $k,i\in \omega$.

(a). $\{W^{k}_i\}_{i\in
 \omega}$ a sequence of Baire sets of $X$.

(b). For each $k\in \omega$, $\{W^{k}_i :i\in
 \omega\}$ is a $\omega$-cover of $X$.

By (2),  $X$ has property $S_{1}(\mathcal{B}_{\Omega},
\mathcal{B}_{\Omega})$, hence, there is a sequence
$\{W^{k}_{i(k)}\}_{k\in \omega}$ such that $W^{k}_{i(k)}\in
\{W^{k}_i: i\in
 \omega\}$  for each $k\in \omega$ and $\{W^{k}_{i(k)} : k\in \omega\}
 $ is a $\omega$-cover of $X$.

 Consider $\{f^{k}_{i(k)}\}_{k\in \omega}$ and we claim that $f\in \overline{\{f^{k}_{i(k)} : k\in
 \omega\}}$.
Let $K$ be a finite subset of $X$, $\epsilon>0$ and $U=<f, K,
\epsilon>$ be a base neighborhood of $f$, then there is $k_0\in
\omega$ such that $\frac{1}{k_0}<\epsilon$ and $K\subset
W^{k_0}_{i(k_0)}$. It follows that $f^{k_0}_{i(k_0)}\in U$.

Let $D=\{d_n: n\in \omega \}$ be a dense subspace of $B(X)$. Given
a sequence $\{D_i\}_{i\in \omega}$ of dense subspace of $B(X)$,
enumerate it as $\{D_{n,m}: n,m \in \omega \}$. For each $n\in
\omega$, pick $d_{n,m}\in D_{n,m}$ so that $d_n\in
\overline{\{d_{n,m}: m\in \omega\}}$. Then $\{d_{n,m}: m,n\in
\omega\}$ is dense in $B(X)$.

$(2)\Leftrightarrow(3)$. By Theorem \ref{th24} and Theorem
\ref{th30}.

$(1)\Leftrightarrow(4)$.  By Proposition 52 in \cite{bbm1} for
countable dense subsets of $B(X)$.
\end{proof}

\section{$B(X)$ $\models$ $S_{fin}(\mathcal{D},\mathcal{D})$}

\begin{theorem}\label{th34} For a Tychonoff space $X$, the
following are equivalent:

\begin{enumerate}

\item $B(X)$  $\models$ $S_{fin}(\Omega_x, \Omega_x)$;

\item $X$ $\models$ $S_{fin}(\mathcal{B}_{\Omega},
\mathcal{B}_{\Omega})$.

\end{enumerate}

\end{theorem}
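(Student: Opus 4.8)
The plan is to adapt the proof of Theorem~\ref{th24} almost verbatim, replacing the $S_1$-style ``single limit point'' argument with the $S_{fin}$-style ``finite subsets whose union works'' argument. Throughout I use that $B(X)$ is homogeneous, so in both directions I may assume the relevant function is the constant $\mathbf{0}$ (or $\mathbf{1}$, whichever is notationally cleaner), and I use that $X$ is Tychonoff only in the sense already exploited in Theorem~\ref{th24}: for a Baire $\omega$-cover $\mathcal{U}$ the set $A_{\mathcal{U}}=\{f\in B(X): f\upharpoonright(X\setminus U)=0 \text{ for some } U\in\mathcal{U}\}$ is dense in $B(X)$.

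For $(1)\Rightarrow(2)$: let $\{\mathcal{U}_n\}_{n\in\omega}$ be a sequence of Baire $\omega$-covers of $X$. Form $A_n=\{f\in B(X): f\upharpoonright(X\setminus U)=0 \text{ for some } U\in\mathcal{U}_n\}$, which is dense in $B(X)$, and let $f$ be the constant function $1$; then $f\in\bigcap_n\overline{A_n}$. Apply $S_{fin}(\Omega_x,\Omega_x)$ at $f$ to get finite sets $F_n\subseteq A_n$ with $f\in\overline{\bigcup_n F_n}$. For each $g\in F_n$ choose $U(g)\in\mathcal{U}_n$ with $g\upharpoonright(X\setminus U(g))=0$, and set $\mathcal{F}_n=\{U(g): g\in F_n\}$, a finite subset of $\mathcal{U}_n$. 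Given a finite $K=\{x_1,\dots,x_k\}\subseteq X$, the basic neighborhood $[x_1,\dots,x_k;W,\dots,W]$ with $W=(0,2)$ contains some $g\in\bigcup_n F_n$, hence $g\in F_n$ for some $n$ and $K\subseteq U(g)\in\mathcal{F}_n$; thus $\bigcup_n\mathcal{F}_n$ is a Baire $\omega$-cover of $X$, i.e.\ $X\models S_{fin}(\mathcal{B}_{\Omega},\mathcal{B}_{\Omega})$. (One checks, as usual, that $X\notin\bigcup_n\mathcal{F}_n$ since each member comes from some $\mathcal{U}_n$ which is a cover in the nontrivial sense.)

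For $(2)\Rightarrow(1)$: let $f\in\bigcap_n\overline{A_n}$ with $A_n\subseteq B(X)$; by homogeneity take $f=\mathbf{0}$. Put $\mathcal{U}_n=\{g^{-1}(-1/n,1/n): g\in A_n\}$; as in Theorem~\ref{th24} each $\mathcal{U}_n$ is a Baire $\omega$-cover of $X$. If $M=\{n: X\in\mathcal{U}_n\}$ is infinite, pick for each $n\in M$ some $g_n\in A_n$ with $g_n^{-1}(-1/n,1/n)=X$; then $g_n\to f$ uniformly and we are done by taking $F_n=\{g_n\}$ for $n\in M$ and $F_n=\emptyset$ (or any singleton) otherwise. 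So assume there is $n_0$ with $X\notin\mathcal{U}_n$ for all $n\geq n_0$. Apply $S_{fin}(\mathcal{B}_{\Omega},\mathcal{B}_{\Omega})$ to $\{\mathcal{U}_n: n\geq n_0\}$ to obtain finite $\mathcal{F}_n\subseteq\mathcal{U}_n$ with $\bigcup_{n\geq n_0}\mathcal{F}_n$ a Baire $\omega$-cover; for each member of $\mathcal{F}_n$ fix a witnessing $g\in A_n$ and collect these into a finite set $F_n\subseteq A_n$. Then for any basic neighborhood $[x_1,\dots,x_k;W,\dots,W]$ of $f$ with $W=(-\epsilon,\epsilon)$, choose $n\geq n_0$ with $1/n<\epsilon$ and a member of $\mathcal{F}_n$ containing $\{x_1,\dots,x_k\}$; its witness $g\in F_n$ lies in that neighborhood. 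Hence $f\in\overline{\bigcup_{n\geq n_0}F_n}$, padding the first $n_0$ slots with arbitrary singletons, which gives $B(X)\models S_{fin}(\Omega_x,\Omega_x)$ at $f$; since $f$ was arbitrary (up to homogeneity), we are done.

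The only genuinely delicate point, and the one I would be most careful about, is the bookkeeping in $(2)\Rightarrow(1)$ when passing from a \emph{tail} $\{\mathcal{U}_n: n\geq n_0\}$ back to a full sequence indexed by all of $\omega$ — the finitely many discarded indices must be filled with finite (possibly empty, or arbitrary singleton) subsets of $A_n$ without destroying membership in the closure, exactly as handled in Theorem~\ref{th24}; here it is harmless because only the tail contributes to approximating $f$. A secondary routine check is that the constructed families are covers in the paper's nontrivial sense ($X$ not a member), which follows since every selected set descends from some $\mathcal{U}_n$ that already had this property. No new idea beyond the proof of Theorem~\ref{th24} is needed; $S_{fin}$ simply replaces ``pick one point'' by ``pick a finite set,'' and ``$\omega$-cover'' is closed under the resulting finite unions.
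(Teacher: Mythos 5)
Your proposal is correct and follows essentially the same route as the paper's own proof: the same dense sets $A_n=\{f\in B(X): f\upharpoonright(X\setminus U)=0 \text{ for some } U\in\mathcal{U}_n\}$ with the constant function $1$ in one direction, and the same sets $g^{-1}(-1/n,1/n)$ with the case split on whether $X$ occurs infinitely often in the other. The tail/padding bookkeeping you flag is handled the same way in the paper, so no further comment is needed.
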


\begin{proof}
$(1)\Rightarrow(2)$. Let $\{\mathcal{U}_n : n\in \omega\}$ be a
sequence of Baire $\omega$-covers of $X$. We set $A_n=\{f\in B(X):
f\upharpoonright (X\setminus U)=0$ for some $U\in \mathcal{U}_n
\}$. It is not difficult to see that each $A_n$ is dense in $B(X)$
since each $\mathcal{U}_n$ is an $\omega$-cover of $X$ and $X$ is
Tychonoff. Let $f$ be the constant function to 1. By the
assumption there exist $F_n\subset A_n$ such that $f\in
\overline{\bigcup\{F_n: n\in \omega\}}$. For each $F_n$ we take
$W_n=\{U^j_n\}_{j=1}^{k(n)}\subset \mathcal{U}_n$ such that
$f^j_n\upharpoonright(X\setminus U^j_n)=0$ for $f^j_n\in F_n$ and
$U^j_n\in W_n$ where $j=1,...,k(n)$.

 Set $\mathcal{U}=\bigcup_{n\in \omega} W_n$. For each finite subset
$\{x_1,...,x_k\}$ of $X$ we consider the basic open neighborhood
of $f$ $[x_1,...,x_k; W,..., W]$, where $W=(0,2)$. Note that
$[x_1,...,x_k; W,..., W]$ contains some $f^{j(n)}_n$. This means
$\{x_1,...,x_k\}\subset U^{j(n)}_n$. Consequently $\mathcal{U}$ is
an $\omega$-cover of $X$.

$(2)\Rightarrow(1)$. Let $f\in \bigcap\limits_n\overline{A_n}$,
where $A_n$ is a subset of $B(X)$. Since $B(X)$ is homogeneous, we
may think that $f$ is the constant function to the zero. We set
$\mathcal{U}_n=\{g^{-1}(-1/n, 1/n) : g\in A_n\}$ for each $n\in
\omega$. For each $n\in \omega$ and each finite subset $\{x_1,...,
x_k\}$ of $X$ a neighborhood $[x_1,...,x_k; W,..., W]$ of $f$,
where $W=(-1/n, 1/n)$, contains some $g\in A_n$. This means that
each $\mathcal{U}_n$ is the Baire $\omega$-cover of $X$. In case
the set $M=\{n\in \omega: X\in \mathcal{U}_n \}$ is infinite,
choose $g_{m}\in A_m$ $m\in M$ so that $g^{-1}(-1/m, 1/m)=X$, then
$g_m \mapsto f$. So we may assume that there exists $n\in \omega$
such that for each $m\geq n$ and $g\in A_m$ $g^{-1}(-1/m, 1/m)$ is
not $X$. For the sequence $\{\mathcal{U}_m : m\geq n\}$ of Baire
$\omega$ there exist $F_m=\{f_{m,1},..., f_{m,k(m)}\}\subset A_m$
such that

$\mathcal{U}=\bigcup_{m\geq n} \{f^{-1}_{m,1}(-1/m,1/m),...,
f^{-1}_{m,k(m)}(-1/m,1/m): i=1,...,k(m) \}$ is a Baire
$\omega$-cover of $X$. Let $[x_1,...,x_k; W,...,W]$ be any basic
open neighborhood of $f$, where $W=(-\epsilon, \epsilon)$,
$\epsilon>0$. There exists $m\geq n$ and $j\in \{1,...,k(m)\}$
such that $\{x_1,...,x_k\}\subset f^{-1}_{m,j}(-1/m, 1/m)$ and
$1/m<\epsilon$. This means $f\in \overline{\bigcup \{F_m: m\geq
n\}}$.

\end{proof}

\begin{theorem}\label{th35} For a Tychonoff space $X$, the
following are equivalent:

\begin{enumerate}

\item $B(X)$ $\models$ $S_{fin}(\mathcal{D},\mathcal{D})$ and it
is separable;

\item $X$ $\models$ $S_{fin}(\mathcal{B}_{\Omega},
\mathcal{B}_{\Omega})$ and $iw(X)=\aleph_0$;

\item $B(X)$ $\models$ $S_{fin}(\Omega_x, \Omega_x)$ and it is
separable;

\item $B(X)$ $\models$ $S_{fin}(\mathcal{D}, \Omega_x)$ and it is
separable.

\end{enumerate}

\end{theorem}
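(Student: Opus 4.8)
The plan is to mirror the structure of the proof of Theorem~\ref{th22}, replacing the use of Theorem~\ref{th24} by Theorem~\ref{th34} and the Sakai-type $S_1$ arguments by their $S_{fin}$ analogues throughout. The equivalences $(1)\Leftrightarrow(4)$ and $(2)\Leftrightarrow(3)$ should come almost for free: $(2)\Leftrightarrow(3)$ follows by combining Theorem~\ref{th34} (which identifies $B(X)\models S_{fin}(\Omega_x,\Omega_x)$ with $X\models S_{fin}(\mathcal{B}_\Omega,\mathcal{B}_\Omega)$) with Pestrjkov's Theorem~\ref{th30} (which identifies separability of $B(X)$ with $iw(X)=\aleph_0$), exactly as in Theorem~\ref{th22}. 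The implication $(1)\Leftrightarrow(4)$ should again follow from the relevant proposition in \cite{bbm1} (the $S_{fin}$-version of Proposition~52) applied to countable dense subsets of $B(X)$; once $B(X)$ is known separable, $S_{fin}(\mathcal{D},\mathcal{D})$ and $S_{fin}(\mathcal{D},\Omega_x)$ coincide.

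The substantive work is $(1)\Leftrightarrow(2)$. For $(1)\Rightarrow(2)$: assume $B(X)\models S_{fin}(\mathcal{D},\mathcal{D})$ and $B(X)$ separable. By Theorem~\ref{th30}, $iw(X)=\aleph_0$, so fix a condensation $g\colon X\to Z$ onto a separable metrizable $Z$ with countable base $\beta$. Given a sequence $\{\mathcal{B}_i\}_{i\in\omega}$ of countable Baire $\omega$-covers of $X$, I would form, exactly as in the proof of $(1)\Rightarrow(2)$ of Theorem~\ref{th22}, the topology $\tau$ generated by the sets $W^j_i\cap A_s$ and $(X\setminus W^j_i)\cap A_s$ (with $A_s\in g^{-1}(\beta)$), obtaining a Baire bijection $\varphi\colon X\to Z'\subseteq 2^\omega$ onto a separable metrizable space, with each $\varphi(\mathcal{B}_i)$ a countable open $\omega$-cover of $Z'$. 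Then $B(Z')$ is dense in $B(X)$, hence inherits $S_{fin}(\mathcal{D},\mathcal{D})$, and $C_p(Z')$ is dense in $B(Z')$, hence also has $S_{fin}(\mathcal{D},\mathcal{D})$. By the appropriate $S_{fin}$-analogue of Theorem~3.3 in \cite{os1}, $Z'\models S_{fin}(\Omega,\Omega)$, which yields finite sets $\mathcal{F}_i\subseteq\mathcal{B}_i$ with $\bigcup_i\varphi(\mathcal{F}_i)$ an $\omega$-cover of $Z'$; pulling back, $\bigcup_i\mathcal{F}_i$ is a Baire $\omega$-cover of $X$, so $X\models S_{fin}(\mathcal{B}_\Omega,\mathcal{B}_\Omega)$.

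For $(2)\Rightarrow(1)$: assume $X\models S_{fin}(\mathcal{B}_\Omega,\mathcal{B}_\Omega)$ and $iw(X)=\aleph_0$; by Pestrjkov, $B(X)$ is separable. Given a sequence $\{D_i\}_{i\in\omega}$ of countable dense subsets of $B(X)$, it suffices (by the standard re-indexing trick used at the end of the proof of Theorem~\ref{th22}: enumerate as $\{D_{n,m}\}$, diagonalize against a fixed countable dense $\{d_n\}$) to show that for each $f\in B(X)$ there are finite $F_k\subseteq D_k$ with $f\in\overline{\bigcup_k F_k}$. By homogeneity take $f=\mathbf 0$; write $D_k=\{f^k_i:i\in\omega\}$ and $W^k_i=\{x: -1/k<f^k_i(x)<1/k\}$. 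If some $j$ admits $k(j)$ with $W^{k(j)}_{i(j)}=X$ the convergence is uniform and we are done; otherwise each $\{W^k_i:i\in\omega\}$ is a Baire $\omega$-cover, so $S_{fin}(\mathcal{B}_\Omega,\mathcal{B}_\Omega)$ gives finite index sets producing finite $F_k\subseteq D_k$ with $\bigcup_k\{W^k_i: f^k_i\in F_k\}$ an $\omega$-cover of $X$; then any basic neighborhood $<\mathbf 0,K,\epsilon>$ is met by picking $k_0$ with $1/k_0<\epsilon$ and $K$ inside one of the chosen $W^{k_0}_i$, so the corresponding $f^{k_0}_i\in F_{k_0}$ lies in the neighborhood. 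The main obstacle is confirming that Theorem~3.3 of \cite{os1} (relating $S_*(\mathcal{D},\mathcal{D})$ for $C_p$ of a separable metrizable space to $S_*(\Omega,\Omega)$ for the space) is indeed available in the $S_{fin}$ form with all density-transfer steps ($B(Z')$ dense in $B(X)$, $C_p(Z')$ dense in $B(Z')$) valid; everything else is a routine translation of the $S_1$ proof.
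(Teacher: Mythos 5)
Your proposal is correct and follows essentially the same route as the paper: the same condensation-plus-generated-topology construction for $(1)\Rightarrow(2)$, the same $W^k_i$ cover argument and diagonal re-indexing for $(2)\Rightarrow(1)$, and the same reductions for $(2)\Leftrightarrow(3)$ and $(1)\Leftrightarrow(4)$. The one external fact you flag as the "main obstacle" --- that $C_p(Z)\models S_{fin}(\mathcal{D},\mathcal{D})$ for separable metrizable $Z$ implies $Z\models S_{fin}(\Omega,\Omega)$ --- is exactly what the paper invokes, citing Theorem~21 of \cite{bbm1} together with \cite{jmss} rather than an $S_{fin}$-analogue of Theorem~3.3 of \cite{os1}, and $(1)\Leftrightarrow(4)$ is handled by Proposition~15 of \cite{bbm1}.
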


\begin{proof} $(1)\Rightarrow(2)$. Let $X$ be a Tychonoff space
satisfying the hypotheses. By Pestrjkov's Theorem \ref{th30},
$iw(X)=\aleph_0$. Consider a condensation (one-to-one continuous
mapping) $g: X\mapsto Z$ from a space $X$ onto a separable
metrizable space $Z$. Let $\beta$ be a countable base of $Z$.
Denote $g^{-1}(\beta):=\{g^{-1}(B): B\in \beta\}$.

 Let $\{\mathcal{B}_i\}_{i\in \omega}$ be a sequence
of countable Baire $\omega$-covers of $X$ where
$\mathcal{B}_i=\{W^{j}_{i}: j\in \omega\}$ for each $i\in \omega$.

Consider a topology $\tau$ generated by the family
$\mathcal{P}=\{W^{j}_{i}\cap A_s: i,j\in \omega$ and $A_s\in
g^{-1}(\beta)\}\bigcup \{(X\setminus W^{j}_{i})\cap A_s: i,j\in
\omega$ and $A_s\in g^{-1}(\beta)\} $.

 A space $Z=(X,\tau)$ is a separable metrizable space. Note that
 $\{\mathcal{B}_i\}$ is countable open $\omega$-cover of $Z$
 for each $i\in \omega$. Let $\varphi$ be a Baire mapping $\varphi: X
\mapsto Z$ from a space $X$ on a separable metric space $Z$. Then
$B(\varphi(X))=B(Z)$ is a dense subset of $B(X)$ and, hence,
  $B(Z)$ has property $S_{fin}(\mathcal{D},\mathcal{D})$. Since
  $C_p(Z)$ is a dense subset of $B(Z)$, $C_p(Z)$ also has property $S_{fin}(\mathcal{D},\mathcal{D})$.

  By Theorem 21 in \cite{bbm1} and \cite{jmss}, the space $Z$ has property $S_{fin}(\Omega,
  \Omega)$. It follows that there is a sequence $\{F_i=\{W^{j_1}_{i},...,W^{j_s}_{i}\} \}_{i\in
  \omega}$ such that $F_i\subset\mathcal{B}_i$ and
  $\bigcup_{i\in\omega}F_{i}\in \Omega$. It follows that $\bigcup_{i\in\omega}F_{i}$ is Baire $\omega$-cover of $X$.

$(2)\Rightarrow(1)$. Assume that $X$ has property
$S_{fin}(B_{\Omega}, B_{\Omega})$. Let $\{D_i\}_{i\in \omega}$ be
a sequence countable dense subsets of $B(X)$. We claim that for
any $f\in B(X)$ there is a sequence $\{F_k\}$ of finite subsets of
$B(X)$ such that $F_k\subset D_k$ for each $k\in \omega$ and $f\in
\overline{\bigcup\{F_k : k\in \omega\}}$.

Without loss
 of generality we can assume $f=\bf{0}$. Denote $W^{k}_i=\{ x\in X
 : -\frac{1}{k}<f^k_i(x)<\frac{1}{k} \}$ for each $f^k_i\in D_k=\{f^k_i : i\in \omega\}$ and
 $k\in \omega$.

If for each $j\in \omega$ there is $k(j)$ such that
$W^{k(j)}_{i(j)}=X$, then a sequence $f_{k(j)}=f^{k(j)}_{i(j)}$
uniform convergence to $f$ and, hence, $f\in \overline{\{f_{k(j)}
: j\in \omega\}}$.

We can assume that $W^{k}_i\neq X$ for any $k,i\in \omega$.

(a). $\{W^{k}_i\}_{i\in
 \omega}$ a sequence of Baire sets of $X$.

(b). For each $k\in \omega$, $\{W^{k}_i\}_{i\in
 \omega}$ is a $\omega$-cover of $X$.

By (2), $X$ has property $S_{fin}(\mathcal{B}_{\Omega},
\mathcal{B}_{\Omega})$, hence, there is a sequence
$\{S_k=\{W^{k}_{i(1)},...,W^{k}_{i(s(k))}\}\}_{k\in \omega}$ such
that $S_k\subset  \{W^{k}_i\}_{i\in
 \omega}$  for each $k\in \omega$ and $\bigcup_{k\in \omega} S_k$ is a $\omega$-cover of $X$.

 Consider $\{f^{k}_{i(j)} : j=1,...,s(k),k\in \omega\}$ and we claim that

 $f\in \overline{\{f^{k}_{i(j)} :j=1,...,s(k),k\in \omega\}}$.

Let $K$ be a finite subset of $X$, $\epsilon>0$ and $U=<f, K,
\epsilon>$ be a base neighborhood of $f$, then there is $k_0\in
\omega$ such that $\frac{1}{k_0}<\epsilon$ and $K\subset
W^{k_0}_{i(j')}$. It follows that $f^{k_0}_{i(j')}\in U$ for some
$j'\in \{1,...,s(k_0)\}$.

Let $D=\{d_n: n\in \omega \}$ be a dense subspace of $B(X)$. Given
a sequence $\{D_i\}_{i\in \omega}$ of dense subspace of $B(X)$,
enumerate it as $\{D_{n,m}: n,m \in \omega \}$. For each $n\in
\omega$, pick finite set $F_{n,m}\subset D_{n,m}$ so that $d_n\in
\overline{\bigcup_{m\in \omega} F_{n,m}}$. Then $\bigcup_{m,n\in
\omega} F_{n,m}$ is dense in $B(X)$.

$(2)\Leftrightarrow(3)$. By Theorem \ref{th34} and Theorem
\ref{th30}.

$(1)\Leftrightarrow(4)$. By Proposition 15  in \cite{bbm1} for
countable dense subsets of $B(X)$.
\end{proof}

\section{$B(X)$ $\models$ $S_1(\mathcal{D},\mathcal{S})$}

\begin{theorem}\label{th66} For a Tychonoff space $X$, the following statements are
equivalent:

\begin{enumerate}

\item $B(X)$ $\models$  $S_{1}(\Omega_x, \Gamma_x)$;

\item $X$ $\models$ $S_{1}(\mathcal{B}_{\Omega},
\mathcal{B}_{\Gamma})$.

\end{enumerate}

\end{theorem}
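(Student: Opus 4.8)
The proof will follow the pattern of Theorems~\ref{th24} and~\ref{th34}, the single new ingredient being that an $S_1$-selection of ``bump-type'' functions that \emph{converges} to the target function translates into the selected Baire sets forming a $\gamma$-cover (rather than merely an $\omega$-cover). By homogeneity of the topological group $B(X)$ it suffices to work at the constant function $\mathbf{1}$ in the first direction and at the constant function $\mathbf{0}$ in the second.

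$(1)\Rightarrow(2)$. Given a sequence $\{\mathcal{U}_n\}_{n\in\omega}$ of countable Baire $\omega$-covers of $X$, set $A_n=\{f\in B(X): f\upharpoonright(X\setminus U)=0$ for some $U\in\mathcal{U}_n\}$. Since $\chi_U\in B(X)$ for every Baire set $U$ and every finite subset of $X$ lies in some member of $\mathcal{U}_n$, the function $\mathbf{1}$ lies in $\overline{A_n}$; and $\mathbf{1}\notin A_n$ because $X\notin\mathcal{U}_n$ forces $X\setminus U\neq\emptyset$ for each $U\in\mathcal{U}_n$. Thus $A_n\in\Omega_{\mathbf 1}$, so $S_1(\Omega_{\mathbf 1},\Gamma_{\mathbf 1})$ produces $f_n\in A_n$ with $f_n\to\mathbf{1}$; choose $U_n\in\mathcal{U}_n$ with $f_n\upharpoonright(X\setminus U_n)=0$. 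For $x\in X$ the neighbourhood $[x;(0,2)]$ of $\mathbf{1}$ contains $f_n$ for all but finitely many $n$, hence $f_n(x)\neq0$, hence $x\in U_n$, for all but finitely many $n$; and were $\{U_n:n\in\omega\}$ finite, a member $U\neq X$ occurring infinitely often would exclude some $x\notin U$ infinitely often -- a contradiction. So $\mathcal{U}=\{U_n:n\in\omega\}$ is an infinite Baire $\gamma$-cover of $X$.

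$(2)\Rightarrow(1)$. Let $A_n\subseteq B(X)$ with $\mathbf{0}\in\overline{A_n}\setminus A_n$. Since every $\gamma$-cover is an $\omega$-cover, hypothesis $(2)$ gives $X\models S_{fin}(\mathcal{B}_\Omega,\mathcal{B}_\Omega)$, so by Theorem~\ref{th34} $B(X)$ has countable fan tightness, in particular countable tightness; hence we may assume each $A_n$ is countable. Put $\mathcal{U}_n=\{g^{-1}(-1/n,1/n):g\in A_n\}$, a countable family of Baire sets which, as $\mathbf{0}\in\overline{A_n}$, has every finite subset of $X$ inside one of its members. Let $M$ be the set of $n$ for which some $g\in A_n$ satisfies $|g(x)|<1/n$ for all $x\in X$, and for $n\in M$ fix such a $g_n\in A_n$. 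If $\omega\setminus M$ is infinite, then for $n\in\omega\setminus M$ each $\mathcal{U}_n$ is a genuine countable Baire $\omega$-cover, and $S_1(\mathcal{B}_\Omega,\mathcal{B}_\Gamma)$ applied to $\{\mathcal{U}_n:n\in\omega\setminus M\}$ yields $g_n\in A_n$ with $\{g_n^{-1}(-1/n,1/n):n\in\omega\setminus M\}$ a $\gamma$-cover; if $\omega\setminus M$ is finite, choose $g_n\in A_n$ there arbitrarily. In every case, for each $x\in X$ we obtain $|g_n(x)|<1/n$ for all but finitely many $n$ -- on $M$ by the choice of $g_n$, off $M$ by the $\gamma$-cover property -- so every basic neighbourhood $<\mathbf{0},K,\epsilon>$ of $\mathbf{0}$ contains $g_n$ for all but finitely many $n$, i.e. $g_n\to\mathbf{0}$. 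Since $\mathbf{0}\notin A_n$ the sequence $(g_n)$ avoids $\mathbf{0}$, so, being convergent to $\mathbf{0}$ in the Hausdorff space $B(X)$, its range is infinite; thus $\{g_n:n\in\omega\}\in\Gamma_{\mathbf 0}$, as required.

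I expect the main obstacle to be the convergence verification in $(2)\Rightarrow(1)$: unlike in Theorems~\ref{th24} and~\ref{th34}, where producing a subsequence accumulating at the target point already suffices, here the \emph{entire} selected sequence must converge to $\mathbf{0}$, which is why the case ``$M$ infinite'' cannot be discharged on its own but must be combined with the $\gamma$-cover selection over $\omega\setminus M$. A secondary technical point is the reduction to countable covers (via countable tightness of $B(X)$, itself obtained from Theorem~\ref{th34}), needed so that $S_1(\mathcal{B}_\Omega,\mathcal{B}_\Gamma)$, stated for countable families, may legitimately be invoked.
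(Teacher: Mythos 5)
Your proof is correct and follows essentially the same route as the paper's: the same auxiliary sets $A_n$ of functions vanishing off members of $\mathcal{U}_n$ in the direction $(1)\Rightarrow(2)$, and the same families $\mathcal{U}_n=\{g^{-1}(-1/n,1/n):g\in A_n\}$ in the direction $(2)\Rightarrow(1)$. You are in fact somewhat more careful than the paper on the edge cases (reducing to countable $A_n$ via countable tightness, merging the ``trivial'' indices in $M$ with the selection over $\omega\setminus M$ so that the \emph{whole} sequence converges, and checking infiniteness of the selected $\gamma$-cover), but the substance is identical.
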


\begin{proof}

$(1)\Rightarrow(2)$. Let $\{\mathcal{U}_n : n\in \omega\}$ be a
sequence of Baire $\omega$-covers of $X$. We set $A_n=\{f\in B(X):
f\upharpoonright (X\setminus U)=0$ for some $U\in \mathcal{U}_n
\}$. It is not difficult to see that each $A_n$ is dense in $B(X)$
since each $\mathcal{U}_n$ is an $\omega$-cover of $X$ and $X$ is
Tychonoff. Let $f$ be the constant function to 1. By the
assumption there exist $f_n\in A_n$ such that $\{f_n\}_{n\in
\omega}$ converge to $f$.

 For each $f_n$ we
take $U_n\in \mathcal{U}_n$ such that
$f_n\upharpoonright(X\setminus U_n)=0$.

 Set $\mathcal{U}=\{ U_n : n\in \omega\}$. For each finite subset
$\{x_1,...,x_k\}$ of $X$ we consider the basic open neighborhood
of $f$ $[x_1,...,x_k; W,..., W]$, where $W=(0,2)$.

 Note that there is $n'\in \omega$ such that
$[x_1,...,x_k; W,..., W]$ contains $f_n$ for $n>n'$. This means
$\{x_1,...,x_k\}\subset U_n$ for $n>n'$. Consequently
$\mathcal{U}$ is an $\gamma$-cover of $X$.

$(2)\Rightarrow(1)$. Let $f\in \bigcap\limits_n\overline{A_n}$,
where $A_n$ is a subset of $B(X)$. Since $B(X)$ is homogeneous, we
may think that $f$ is the constant function to the zero. We set
$\mathcal{U}_n=\{g^{-1}(-1/n, 1/n) : g\in A_n\}$ for each $n\in
\omega$. For each $n\in \omega$ and each finite subset $\{x_1,...,
x_k\}$ of $X$ a neighborhood $[x_1,...,x_k; W,..., W]$ of $f$,
where $W=(-1/n, 1/n)$, contains some $g\in A_n$. This means that
each $\mathcal{U}_n$ is Baire $\omega$-cover of $X$. In case the
set $M=\{n\in \omega: X\in \mathcal{U}_n \}$ is infinite, choose
$g_{m}\in A_m$ $m\in M$ so that $g^{-1}(-1/m, 1/m)=X$, then $g_m
\mapsto f$. So we may assume that there exists $n\in \omega$ such
that for each $m\geq n$ and $g\in A_m$ $g^{-1}(-1/m, 1/m)$ is not
$X$. For the sequence $\{\mathcal{U}_m : m\geq n\}$ of Baire
$\omega$-covers there exist $f_m\in A_m$ such that

$\mathcal{U}=\{f^{-1}_m(-1/m,1/m): m>n\}$ is a $\gamma$-cover of
$X$. Let $[x_1,...,x_k; W,...,W]$ be any basic open neighborhood
of $f$, where $W=(-\epsilon, \epsilon)$, $\epsilon>0$. There
exists $m'\geq n$  such that $\{x_1,...,x_k\}\subset
f^{-1}_m(-1/m, 1/m)$ and $1/m<\epsilon$ for each $m>m'$. This
means $\{f_m\}$ converge to $f$.

\end{proof}

\begin{theorem}\label{th212} For a Tychonoff space $X$, the following statements are
equivalent:

\begin{enumerate}

\item $B(X)$ $\models$ $S_{1}(\mathcal{D},\mathcal{S})$ and it is
separable;

\item $B(X)$ is strongly sequentially separable;

\item $X$ $\models$ $S_{1}(\mathcal{B}_{\Omega},
\mathcal{B}_{\Gamma})$ and $iw(X)=\aleph_0$;

\item $B(X)$ $\models$ $S_{1}(\Omega_x, \Gamma_x)$ and it is
separable;

\item $B(X)$ $\models$ $S_{1}(\mathcal{D}, \Gamma_x)$ and it is
separable.

\end{enumerate}
\end{theorem}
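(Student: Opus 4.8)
The plan is to prove the cycle of implications $(2)\Rightarrow(1)\Rightarrow(5)\Rightarrow(4)\Rightarrow(3)\Rightarrow(2)$, leaning on the already-established Theorem~\ref{th66} (which gives $(4)\Leftrightarrow(3)$ modulo separability via Pestrjkov's Theorem~\ref{th30}) and the pattern of proof used in Theorem~\ref{th22}. The key reformulations are: separability of $B(X)$ is equivalent to $iw(X)=\aleph_0$ by Theorem~\ref{th30}, and strong sequential separability of $B(X)$ means $B(X)$ is separable and every countable dense subset of $B(X)$ is sequentially dense --- which is exactly the ``one-sequence'' form of $S_1(\mathcal{D},\mathcal{S})$ once one checks that homogeneity of $B(X)$ lets a single convergent selection at one point be upgraded.

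First I would do $(1)\Rightarrow(5)$ and $(4)\Rightarrow(3)$: these are the ``local vs.\ dense-set'' passages, and I expect them to follow from the Bella--Bonanzinga--Matveev-type machinery (Proposition~52 in~\cite{bbm1} for $S_1$, applied to countable dense subsets) exactly as in the proof of Theorem~\ref{th22}, together with Theorem~\ref{th66} for the equivalence with the covering property $S_1(\mathcal{B}_\Omega,\mathcal{B}_\Gamma)$. For $(3)\Rightarrow(2)$ (and $(2)\Rightarrow(1)$) I would mimic the condensation/Baire-bijection argument from $(1)\Rightarrow(2)$ and $(2)\Rightarrow(1)$ of Theorem~\ref{th22}: given $iw(X)=\aleph_0$, fix a condensation $g:X\to Z$ onto a separable metrizable $Z$, and given a sequence of countable dense subsets $\{D_i\}$ of $B(X)$ and a point $f$ (which by homogeneity we may take to be $\mathbf 0$), form the Baire $\omega$-covers $W^k_i=\{x: -1/k<f^k_i(x)<1/k\}$ from $D_k=\{f^k_i:i\in\omega\}$; apply $S_1(\mathcal{B}_\Omega,\mathcal{B}_\Gamma)$ to extract $W^k_{i(k)}$ forming a Baire $\gamma$-cover, and check that the corresponding $f^k_{i(k)}$ converge to $f$ --- the $\gamma$-cover condition is precisely what makes the selected sequence eventually enter every basic neighborhood $<f,K,\epsilon>$. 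The degenerate case ($W^{k(j)}_{i(j)}=X$ for cofinally many $j$) is handled by uniform convergence just as in Theorem~\ref{th22}.

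The genuinely new ingredient, and what I expect to be the main obstacle, is the equivalence $(1)\Leftrightarrow(2)$, i.e.\ showing that ``$B(X)\models S_1(\mathcal{D},\mathcal{S})$ and separable'' coincides with ``$B(X)$ strongly sequentially separable.'' One direction is formal: if every countable dense set is sequentially dense then trivially any single countable dense set, and a fortiori the selection from a sequence of them, witnesses $S_1(\mathcal{D},\mathcal{S})$ --- actually one must be slightly careful, because $S_1(\mathcal{D},\mathcal{S})$ asks the \emph{selected} set to be sequentially dense, not each $D_i$, so I would use the standard diagonalization (enumerate a fixed countable dense $D=\{d_n\}$, re-index the given sequence as $\{D_{n,m}\}$, and for each $n$ pick $d_{n,m}\in D_{n,m}$ with $d_n=\lim_m d_{n,m}$, using that $D_{n,m}$ is sequentially dense, so $\{d_{n,m}\}$ is sequentially dense) as at the end of the proof of Theorem~\ref{th22}. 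For the converse, I would argue that $S_1(\mathcal{D},\mathcal{S})$ forces \emph{every} countable dense $D$ to be sequentially dense: given $D$ dense and an arbitrary $f\in B(X)$, apply $S_1(\mathcal{D},\mathcal{S})$ to the constant sequence $D_i=D$ to get a sequentially dense selection inside $D$, and then use homogeneity of $B(X)$ together with the fact that a countable dense subset that is sequentially dense ``at $\mathbf 0$'' translates (after subtracting $f$) to sequential density near $f$ --- the point being that $B(X)$ is a topological group under pointwise addition, so translations are homeomorphisms preserving both density and the chosen sequence. Combining this with Theorem~\ref{th30} to get separability $\Leftrightarrow iw(X)=\aleph_0$ closes the loop. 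I would flag that the only subtlety is making the homogeneity/translation argument precise for sequential (not just topological) density, which is routine since $B(X)$ is a topological vector space but deserves an explicit sentence.
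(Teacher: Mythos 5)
Your architecture routes the hard content of the theorem through the two arrows $(2)\Rightarrow(1)$ and $(1)\Rightarrow(5)$, and both of these, as sketched, have genuine gaps. For $(2)\Rightarrow(1)$ you propose to pick $d_{n,m}\in D_{n,m}$ with $d_n=\lim_m d_{n,m}$ ``using that $D_{n,m}$ is sequentially dense.'' Sequential density of each individual $D_{n,m}$ only gives, for each fixed $(n,m)$, some sequence \emph{inside} $D_{n,m}$ converging to $d_n$; it does not let you choose a \emph{single} point from each member of the sequence $\{D_{n,m}\}_{m\in\omega}$ so that the resulting selection converges to $d_n$. That selection is precisely an instance of $S_1(\mathcal{S},\Gamma_x)$ (or $S_1(\mathcal{D},\Gamma_x)$, i.e.\ statement (5)) at the point $d_n$, which is not available at that stage of your cycle --- it is exactly what the cycle is supposed to deliver later. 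Similarly, for $(1)\Rightarrow(5)$: a selection $\{d_n\}$ that is sequentially dense only guarantees a convergent \emph{subsequence} to the target $f$, whereas membership in $\Gamma_f$ requires the whole selected set to converge (all but finitely many terms in every neighborhood); and Proposition 52 of \cite{bbm1}, which the paper invokes, concerns the $\Omega_x$-version ($S_1(\mathcal{D},\mathcal{D})$ versus $S_1(\mathcal{D},\Omega_x)$), not the $\Gamma_x$-version. So the composite $(2)\Rightarrow(3)$ --- the genuinely hard direction, which in the paper is outsourced to Theorem 3.4 of \cite{obp} --- is not actually established by your argument.

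The paper avoids both traps by a different decomposition: it proves $(1)\Rightarrow(2)$ trivially (the selection from the constant sequence $D_i=D$ is a subset of $D$ that is sequentially dense, and any superset of a sequentially dense set is sequentially dense --- no homogeneity or translation argument is needed, contrary to the subtlety you flag), cites $(2)\Leftrightarrow(3)$ from \cite{obp}, gets $(3)\Leftrightarrow(4)$ from Theorem \ref{th66}, has $(4)\Rightarrow(5)\Rightarrow(2)$ by restriction and the constant-sequence trick, and then derives $(1)$ from $(3)$ rather than from $(2)$: since $S_1(\mathcal{B}_\Omega,\mathcal{B}_\Gamma)$ implies $S_1(\mathcal{B}_\Omega,\mathcal{B}_\Omega)$, Theorem \ref{th22} yields $S_1(\mathcal{D},\mathcal{D})$, one selects a \emph{dense} set $\{d_i\}$, and then $(2)$ upgrades it to sequentially dense. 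Your direct covering-theoretic proof of $(3)\Rightarrow(2)$ is fine and is a reasonable self-contained substitute for the citation to \cite{obp} in that one direction; but to repair your proof you would either need to import the converse $(2)\Rightarrow(3)$ from \cite{obp} as the paper does, or replace your $(2)\Rightarrow(1)$ by the paper's $(3)\Rightarrow(1)$ argument (dense selection via Theorem \ref{th22}, then upgrade via $(2)$) and drop the problematic arrow $(1)\Rightarrow(5)$ from the cycle.
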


\begin{proof}

$(1)\Rightarrow(2)$. Let $D=\{d_i: i\in \omega\}$ be a countable
dense subset of
 $B(X)$. By $S_{1}(\mathcal{D},\mathcal{S})$, for
 sequence $\{D_i : D_i=D$ and $i\in \omega \}$
there is a set $\{d_{i}: i\in\omega\}$ such that for each $i$,
$d_{i}\in D_{i}$, and $\{d_{i}: i\in\omega \}$ is a countable
sequentially dense  subset of $B(X)$. It follows that $D$ is a
countable sequentially dense subset of $B(X)$.

$(2)\Leftrightarrow(3)$. By Theorem 3.4 in \cite{obp}.

$(3)\Leftrightarrow(4)$. By Theorem \ref{th66}.

$(4)\Rightarrow(5)$ is immediate.

$(5)\Rightarrow(2)$. Let $D\in \mathcal{D}$, $f\in B(X)$ and
$\{D_n\}_{n\in \omega}$ such that $D_n=D$ for each $n\in \omega$.
By $(5)$, there is a set $\{f_{n}: n\in\omega\}$ such that for
each $n$, $f_{n}\in D_{n}$, and $\{f_{n}\}_{n\in\omega}$ converge
to $f$. It follows that $D$ is a sequentially dense subset of
$B(X)$.

$(3)\Rightarrow(1)$. Let $\{D_i\}_{i\in \omega}$ be a sequence of
countable dense subsets of $B(X)$. Since $X$ $\models$
$S_{1}(\mathcal{B}_{\Omega},
 \mathcal{B}_{\Gamma})$, then,  $X$ $\models$ $S_{1}(\mathcal{B}_{\Omega}, \mathcal{B}_{\Omega})$ and, by
 Theorem \ref{th22}, $B(X)$ $\models$ $S_{1}(\mathcal{D},\mathcal{D})$. Then there is a sequence $\{d_{i}\}_{i\in\omega}$ such that for each $i$,
$d_{i}\in D_{i}$, and $\{d_{i}: i\in\omega \}$ is a countable
dense  subset of $B(X)$. By $(2)$, $\{d_{i}: i\in\omega \}$ is a
countable sequentially dense subset of $B(X)$, i.e. $\{d_{i}:
i\in\omega \}\in \mathcal{S}$.

\end{proof}

Note that $S_{1}(\mathcal{B}_{\Omega},
\mathcal{B}_{\Gamma})=S_{fin}(\mathcal{B}_{\Omega},
\mathcal{B}_{\Gamma})$ (see \cite{scts}).

\begin{theorem}\label{th55} For a Tychonoff space $X$, the following statements are
equivalent:

\begin{enumerate}

\item $B(X)$ $\models$ $S_{fin}(\Omega_x, \Gamma_x)$;

\item $X$ $\models$ $S_{fin}(\mathcal{B}_{\Omega},
\mathcal{B}_{\Gamma})$.

\end{enumerate}

\end{theorem}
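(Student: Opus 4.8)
The plan is to mimic the translation scheme used in Theorems \ref{th24}, \ref{th34}, and \ref{th66}, turning a sequence of Baire $\omega$-covers of $X$ into a sequence of dense sets in $B(X)$ whose "selected" points witness the $\Omega_x$-to-$\Gamma_x$ property, and conversely reading off a $\gamma$-cover from a converging sequence in $B(X)$. The only difference from Theorem \ref{th66} is that in both directions the inner selection is allowed to be finite (we pick $F_n\subseteq A_n$ finite rather than a single $f_n$), so the bookkeeping must be done with finite blocks $W_n=\{U^j_n\}_{j=1}^{k(n)}\subseteq\mathcal{U}_n$ exactly as in the proof of Theorem \ref{th34}, while the target cover must now be a $\gamma$-cover rather than merely an $\omega$-cover.

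For $(1)\Rightarrow(2)$: given a sequence $\{\mathcal{U}_n:n\in\omega\}$ of Baire $\omega$-covers, set $A_n=\{f\in B(X): f\upharpoonright(X\setminus U)=0 \text{ for some } U\in\mathcal{U}_n\}$; each $A_n$ is dense in $B(X)$ since $\mathcal{U}_n$ is an $\omega$-cover and $X$ is Tychonoff (here we use that characteristic-type functions supported off a member of $\mathcal{U}_n$ already form a dense set, just as in the earlier proofs — note these $A_n$ need not be countable, but $B(X)$'s relevant fan-tightness property is phrased for $\Omega_x,\Gamma_x$ so countability is not required). Let $f\equiv 1$. By $S_{fin}(\Omega_x,\Gamma_x)$ there are finite $F_n\subseteq A_n$ with $\bigcup_n F_n\in\Gamma_f$, i.e.\ the union converges to $f$. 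For each $g\in F_n$ choose $U_g\in\mathcal{U}_n$ with $g\upharpoonright(X\setminus U_g)=0$, and set $W_n=\{U_g:g\in F_n\}$, $\mathcal{U}=\bigcup_n W_n$. Given a finite $\{x_1,\dots,x_k\}\subseteq X$, the neighborhood $[x_1,\dots,x_k;W,\dots,W]$ with $W=(0,2)$ of $f$ contains all but finitely many members of $\bigcup_n F_n$; for each such member $g$ we get $\{x_1,\dots,x_k\}\subseteq U_g$. Since only finitely many blocks $W_n$ are thereby excluded, $\{x_1,\dots,x_k\}$ lies in all but finitely many members of $\mathcal{U}$, so $\mathcal{U}$ is a Baire $\gamma$-cover of $X$.

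For $(2)\Rightarrow(1)$: let $f\in\bigcap_n\overline{A_n}$ with $A_n\subseteq B(X)$; by homogeneity assume $f\equiv 0$. Put $\mathcal{U}_n=\{g^{-1}(-1/n,1/n):g\in A_n\}$, which is a Baire $\omega$-cover of $X$ for each $n$ by density of $A_n$. If $M=\{n: X\in\mathcal{U}_n\}$ is infinite, pick $g_m\in A_m$ with $g_m^{-1}(-1/m,1/m)=X$ for $m\in M$ and note $g_m\to f$, so the (finite-block) selection is immediate; otherwise drop finitely many indices and assume $g^{-1}(-1/n,1/n)\neq X$ always. Apply $S_{fin}(\mathcal{B}_{\Omega},\mathcal{B}_{\Gamma})$ to $\{\mathcal{U}_n\}$ to obtain finite $F_n\subseteq A_n$ such that $\mathcal{U}=\bigcup_n\{g^{-1}(-1/n,1/n):g\in F_n\}$ is a Baire $\gamma$-cover of $X$. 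For a basic neighborhood $[x_1,\dots,x_k;W,\dots,W]$ of $f$ with $W=(-\epsilon,\epsilon)$, all but finitely many members of $\mathcal{U}$ contain $\{x_1,\dots,x_k\}$, and since $1/n<\epsilon$ for large $n$, all but finitely many $g\in\bigcup_n F_n$ lie in that neighborhood; hence $\bigcup_n F_n$ converges to $f$, giving $S_{fin}(\Omega_x,\Gamma_x)$.

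The main obstacle is the direction handling of the $\gamma$-condition under \emph{finite} selection: one must check that passing from a single chosen function to a finite block $F_n$ does not destroy the "all but finitely many" quantifier — i.e.\ that the union $\bigcup_n W_n$ (resp.\ $\bigcup_n\{g^{-1}(-1/n,1/n):g\in F_n\}$) is genuinely a $\gamma$-cover, not merely an $\omega$-cover, and that a converging \emph{union of finite sets} $\bigcup_n F_n$ still yields, after possibly discarding an initial segment, the cofinite behavior needed at each finite subset of $X$. This is exactly the same subtlety as in the proof of Theorem \ref{th34} combined with the $\gamma$-version in Theorem \ref{th66}, and since $S_{fin}(\mathcal{B}_{\Omega},\mathcal{B}_{\Gamma})=S_1(\mathcal{B}_{\Omega},\mathcal{B}_{\Gamma})$ (noted after Theorem \ref{th212}), the argument could even be routed through Theorem \ref{th66}, but the direct proof above is cleaner and parallels the pattern already established in the paper.
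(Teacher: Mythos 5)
Your proof is correct and follows essentially the same route as the paper: for $(1)\Rightarrow(2)$ the paper uses the identical construction (it merely extracts the subsequence of first elements $f^1_n$ from the finite blocks, whereas you keep the whole blocks and argue that only finitely many of them are spoiled), and for $(2)\Rightarrow(1)$ the paper simply invokes Theorem~\ref{th66} together with the coincidence $S_1(\mathcal{B}_{\Omega},\mathcal{B}_{\Gamma})=S_{fin}(\mathcal{B}_{\Omega},\mathcal{B}_{\Gamma})$ --- exactly the shortcut you point out at the end --- rather than writing out the direct finite-block argument you give. No gaps.
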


\begin{proof} By Theorem \ref{th66}, it suffices to prove $(1)\Rightarrow(2)$.

$(1)\Rightarrow(2)$. Let $\{\mathcal{U}_n\}_{n\in \omega}$ be a
sequence of Baire $\omega$-covers of $X$. We set $A_n=\{f\in B(X):
f\upharpoonright (X\setminus U)=0$ for some $U\in \mathcal{U}_n
\}$. It is not difficult to see that each $A_n$ is dense in $B(X)$
since each $\mathcal{U}_n$ is an $\omega$-cover of $X$ and $X$ is
Tychonoff. Let $f$ be the constant function to 1. By the
assumption there exist $\{f^i_n\}_{i=1}^{k(n)}\subset A_n$ such
that $\bigcup_{n\in \omega} \{f^i_n\}_{i=1}^{k(n)}$ converge to
$f$. Note that a subsequence $\{f^1_n\}_{n\in \omega}$ also
converge to $f$.

 For each $f^1_n$ we
take $U_n\in \mathcal{U}_n$ such that
$f^1_n\upharpoonright(X\setminus U_n)=0$.

 Set $\mathcal{U}=\{ U_n : n\in \omega\}$. For each finite subset
$\{x_1,...,x_k\}$ of $X$ we consider the basic open neighborhood
of $f$ $[x_1,...,x_k; W,..., W]$, where $W=(0,2)$.

 Note that there is $n'\in \omega$ such that
$[x_1,...,x_k; W,..., W]$ contains $f^1_n$ for $n>n'$. This means
$\{x_1,...,x_k\}\subset U_n$ for $n>n'$. Consequently
$\mathcal{U}$ is an $\gamma$-cover of $X$.
\end{proof}

\begin{theorem}\label{th123} For a Tychonoff space $X$, the following statements are
equivalent:

\begin{enumerate}

\item $B(X)$ $\models$ $S_{fin}(\mathcal{D},\mathcal{S})$ and it
is separable;

\item $B(X)$ is strongly sequentially separable;

\item $X$ $\models$ $S_{fin}(\mathcal{B}_{\Omega},
\mathcal{B}_{\Gamma})$ and $iw(X)=\aleph_0$;

\item $B(X)$ $\models$ $S_{fin}(\Omega_x, \Gamma_x)$ and it is
separable;

\item $B(X)$ $\models$ $S_{fin}(\mathcal{D}, \Gamma_x)$ and it is
separable.

\end{enumerate}

\end{theorem}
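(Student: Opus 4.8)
The plan is to mirror the architecture of Theorem~\ref{th212} almost verbatim, replacing $S_1$ by $S_{fin}$ throughout and invoking the $S_{fin}$-versions of the auxiliary results that have already been established in this section. Concretely, I would prove the cycle of implications $(1)\Rightarrow(2)\Leftrightarrow(3)\Leftrightarrow(4)\Rightarrow(5)\Rightarrow(2)$ together with $(3)\Rightarrow(1)$, exactly as in the $S_1$ case.

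First, $(1)\Rightarrow(2)$: given a countable dense set $D=\{d_i:i\in\omega\}\subseteq B(X)$, apply $S_{fin}(\mathcal{D},\mathcal{S})$ to the constant sequence $D_i=D$; this produces finite sets $F_i\subseteq D$ with $\bigcup_i F_i$ sequentially dense, and since $\bigcup_i F_i\subseteq D$ we conclude $D$ itself is sequentially dense, so $B(X)$ is strongly sequentially separable. The equivalence $(2)\Leftrightarrow(3)$ is the same citation used in Theorem~\ref{th212}, namely Theorem~3.4 in \cite{obp} (it characterizes strong sequential separability of $B(X)$ by $iw(X)=\aleph_0$ together with the cover property, and since $S_{fin}(\mathcal{B}_\Omega,\mathcal{B}_\Gamma)=S_1(\mathcal{B}_\Omega,\mathcal{B}_\Gamma)$ by the remark after Theorem~\ref{th212}, this is literally the same statement). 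The equivalence $(3)\Leftrightarrow(4)$ is Theorem~\ref{th55}. Then $(4)\Rightarrow(5)$ is immediate from the implication $S_{fin}(\Omega_x,\Gamma_x)\Rightarrow S_{fin}(\mathcal{D},\Gamma_x)$ combined with separability (or rather directly: $\mathcal{D}\subseteq\bigcup\Omega_x$ at each point, so $S_{fin}(\Omega_x,\Gamma_x)$ yields $S_{fin}(\mathcal{D},\Gamma_x)$), and $(5)\Rightarrow(2)$ again specializes the selection principle to a constant sequence $D_n=D$: the finite selections $F_n\subseteq D$ yield $\bigcup_n F_n$ whose union (contained in $D$) witnesses that every point is a sequential limit from $D$.

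The one implication that needs genuine content is $(3)\Rightarrow(1)$, and here I would copy the $S_1$ argument with $S_{fin}$ substitutions. Given a sequence $\{D_i\}_{i\in\omega}$ of countable dense subsets of $B(X)$: since $S_{fin}(\mathcal{B}_\Omega,\mathcal{B}_\Gamma)$ implies $S_{fin}(\mathcal{B}_\Omega,\mathcal{B}_\Omega)$ and $iw(X)=\aleph_0$, Theorem~\ref{th35} gives $B(X)\models S_{fin}(\mathcal{D},\mathcal{D})$, so there are finite $F_i\subseteq D_i$ with $\bigcup_i F_i$ dense in $B(X)$; by $(2)$ this dense set is sequentially dense, hence lies in $\mathcal{S}$, which is exactly the witness required for $S_{fin}(\mathcal{D},\mathcal{S})$. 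Separability is automatic since $iw(X)=\aleph_0$ gives it via Pestrjkov's Theorem~\ref{th30}.

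The main obstacle—if any—is purely bookkeeping: making sure the cited auxiliary results are stated in the $S_{fin}$ form actually needed (in particular that Theorem~3.4 of \cite{obp} applies in both $S_1$ and $S_{fin}$ guises, which it does because of the collapse $S_{fin}(\mathcal{B}_\Omega,\mathcal{B}_\Gamma)=S_1(\mathcal{B}_\Omega,\mathcal{B}_\Gamma)$), and confirming that the elementary reductions $S_{fin}(\Omega_x,\Gamma_x)\Rightarrow S_{fin}(\mathcal{D},\Gamma_x)\Rightarrow$ (strong sequential separability) go through with constant sequences. There is no hard analytic step: the delicate translation between cover properties of $X$ and tightness-type properties of $B(X)$ has already been carried out in Theorems~\ref{th34}, \ref{th35}, and \ref{th55}, so this theorem is a formal corollary of those together with Theorem~3.4 in \cite{obp}. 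Accordingly I would present the proof as a short chain of references, exactly parallel to the proof of Theorem~\ref{th212}, spelling out only $(1)\Rightarrow(2)$, $(5)\Rightarrow(2)$, and $(3)\Rightarrow(1)$ in a line or two each.
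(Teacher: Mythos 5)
Your proposal is correct and is essentially the paper's own argument in expanded form: the paper proves Theorem~\ref{th123} simply ``by Theorem~\ref{th55} and Theorem~\ref{th212},'' relying on the same ingredients you identify (the collapse $S_{fin}(\mathcal{B}_{\Omega},\mathcal{B}_{\Gamma})=S_1(\mathcal{B}_{\Omega},\mathcal{B}_{\Gamma})$, Theorem~\ref{th55} for the cover/tightness translation, and the constant-sequence reductions inherited from the $S_1$ case). Your write-up just makes explicit the bookkeeping the paper leaves implicit.
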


\begin{proof} By Theorem \ref{th55} and Theorem \ref{th212}.

\end{proof}

\section{$B(X)$ $\models$ $S_1(\mathcal{S},\mathcal{D})$}

Recall that $U_{fin}(\mathcal{S},\mathcal{D})$ is the selection
hypothesis:
 whenever $\mathcal{U}_1$, $\mathcal{U}_2, ... \in \mathcal{S}$, there are finite sets $\mathcal{F}_n\subseteq \mathcal{U}_n$,
$n\in \omega$, such that $\{\bigcup \mathcal{F}_n : n\in
\omega\}\in \mathcal{D}$. For a function space $B(X)$, we can
represent the condition  $\{\bigcup \mathcal{F}_n : n\in
\omega\}\in \mathcal{D}$ as $\forall$ $f\in B(X)$ $\forall$ a base
neighborhood $O(f)=<f, K, \epsilon >$ of $f$  where $\epsilon>0$
and $K=\{x_1, ..., x_k\}$ is a finite subset of $X$, there is
$n'\in \omega$ such that for each $j\in \{1,...,k\}$ there is
$g\in \mathcal{F}_{n'}$ such that $g(x_j)\in (f(x_j)-\epsilon,
f(x_j)+\epsilon)$.

Similarly, $U_{fin}(\Gamma_0,\Omega_0)$: whenever $\mathcal{S}_1$,
$\mathcal{S}_2, ... \in \Gamma_0$, there are finite sets
$\mathcal{F}_n\subseteq \mathcal{S}_n$, $n\in \omega$, such that
$\{\bigcup \mathcal{F}_n : n\in \omega\}\in \Omega_0$, i.e. for a
base neighborhood $O(f)=<f, K, \epsilon
>$ of $f={\bf 0}$  where $\epsilon>0$
and $K=\{x_1, ..., x_k\}$ is a finite subset of $X$, there is
$n'\in \omega$ such that for each $j\in \{1,...,k\}$ there is
$g\in \mathcal{F}_{n'}$ such that $g(x_j)\in (f(x_j)-\epsilon,
f(x_j)+\epsilon)$.

\begin{theorem}\label{th333} For a Tychonoff space $X$, the following statements are
equivalent:

\begin{enumerate}

\item $B(X)$ $\models$ $S_{1}(\Gamma_x,\Omega_x)$;

\item $X$ $\models$ $S_{1}(\mathcal{B}_{\Gamma},
\mathcal{B}_{\Omega})$.

\item $B(X)$ $\models$ $S_{fin}(\Gamma_x,\Omega_x)$;

\item $B(X)$ $\models$ $U_{fin}(\Gamma_x,\Omega_x)$;

\end{enumerate}

\end{theorem}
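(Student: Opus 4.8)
The plan is to prove a cycle of implications, following the pattern already established for the $\Omega$-versions in Theorems \ref{th24} and \ref{th34}. First I would observe that the implications $(1)\Rightarrow(3)\Rightarrow(4)$ are trivial (a single-element selection is a finite selection, and a finite selection witnessing $\Omega_x$ obviously yields the union-type statement). So the real content is the loop closing $(4)\Rightarrow(2)\Rightarrow(1)$, together with the bookkeeping that $(3)$ and $(4)$ are actually equivalent to the rest and not strictly weaker.

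For $(2)\Rightarrow(1)$ I would mimic the $(2)\Rightarrow(1)$ arguments of Theorems \ref{th24}/\ref{th34}, but starting from sets $A_n$ that \emph{converge} to a point rather than merely accumulate at it. Fix $f\in B(X)$ with each $A_n\in\Gamma_f$; by homogeneity assume $f=\mathbf 0$. Set $\mathcal U_n=\{g^{-1}(-1/n,1/n):g\in A_n\}$. The key new point is that because $A_n\to\mathbf 0$, for every finite $K\subseteq X$ all but finitely many $g\in A_n$ satisfy $K\subseteq g^{-1}(-1/n,1/n)$, so $\mathcal U_n$ is in fact a Baire \emph{$\gamma$-cover} of $X$ (after discarding the finitely many indices $n$ for which some $g^{-1}(-1/n,1/n)=X$, exactly as in the earlier proofs). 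Apply $S_1(\mathcal B_\Gamma,\mathcal B_\Omega)$ to get $g_n\in A_n$ with $\{g_n^{-1}(-1/n,1/n):n\ge n_0\}$ a Baire $\omega$-cover; then for any basic neighbourhood $\langle\mathbf 0,K,\epsilon\rangle$ pick $n\ge n_0$ with $1/n<\epsilon$ and $K$ inside $g_n^{-1}(-1/n,1/n)$, giving $g_n\in\langle\mathbf 0,K,\epsilon\rangle$, i.e. $\mathbf 0\in\overline{\{g_n:n\in\omega\}}$. That is $S_1(\Gamma_x,\Omega_x)$.

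For the reverse direction $(4)\Rightarrow(2)$ (which also subsumes $(1)\Rightarrow(2)$ and $(3)\Rightarrow(2)$ since $(1),(3)$ both imply $(4)$), I would start from a sequence $\{\mathcal U_n\}$ of Baire $\gamma$-covers of $X$ and reuse the construction $A_n=\{f\in B(X):f\upharpoonright(X\setminus U)=0$ for some $U\in\mathcal U_n\}$, noting as before that each $A_n$ is dense in $B(X)$. The point now is that if $\mathcal U_n$ is a $\gamma$-cover then, taking $f$ to be the constant function $1$, the set $A_n$ actually \emph{converges} to $f$: given a basic neighbourhood $[x_1,\dots,x_k;W,\dots,W]$ of $f$ with $W=(0,2)$, all but finitely many $U\in\mathcal U_n$ contain $\{x_1,\dots,x_k\}$, and the corresponding functions lie in that neighbourhood --- so $A_n\in\Gamma_f$ (it is infinite and $f\notin A_n$ since the members of $\mathcal U_n$ are proper subsets of $X$). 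Applying $U_{fin}(\Gamma_x,\Omega_x)$ to $\{A_n\}$ yields finite $F_n\subseteq A_n$ with $\mathbf{1}\in\overline{\bigcup_n F_n}$ in the $\omega_x$-sense; for each $g\in F_n$ choose $U_g\in\mathcal U_n$ with $g\upharpoonright(X\setminus U_g)=0$ and set $\mathcal F_n=\{U_g:g\in F_n\}$. Then $\bigcup_n\mathcal F_n$ is a Baire $\omega$-cover of $X$: for any finite $\{x_1,\dots,x_k\}$, the neighbourhood $[x_1,\dots,x_k;(0,2),\dots,(0,2)]$ meets $\bigcup_n F_n$ in some $g$, and then $\{x_1,\dots,x_k\}\subseteq U_g$. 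This gives $S_{fin}(\mathcal B_\Gamma,\mathcal B_\Omega)$, and since $S_1(\mathcal B_\Gamma,\mathcal B_\Omega)=S_{fin}(\mathcal B_\Gamma,\mathcal B_\Omega)$ (a known equivalence, cf.\ the remark after Theorem \ref{th212}, or \cite{scts}), we recover $(2)$ in the $S_1$ form stated.

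The main obstacle I anticipate is the careful handling of the ``degenerate'' indices --- the $n$ for which some chosen set is all of $X$ --- and, more importantly, checking that the $A_n$ really converge (not just accumulate) to the chosen point: one must verify $f\notin A_n$ and that $A_n$ is infinite, which is where the hypothesis that covers are \emph{nontrivial} ($X\notin\mathcal U$) and \emph{infinite} $\gamma$-covers is used. A secondary subtlety is making sure the ``$\omega$-cover'' one extracts from a finite-selection in $B(X)$ is genuinely an $\omega$-cover of $X$ and not merely a large cover; this is handled exactly as in Theorem \ref{th34} by translating basic neighbourhoods of the constant function $1$ into the statement ``every finite set is contained in a member.'' No new ideas beyond those in Theorems \ref{th24}--\ref{th35} are needed; the proof is a $\gamma$/$\Gamma_x$ adaptation of that template, plus the cited collapse $S_1(\mathcal B_\Gamma,\mathcal B_\Omega)=S_{fin}(\mathcal B_\Gamma,\mathcal B_\Omega)$ to merge items $(1)$ and $(3)$.
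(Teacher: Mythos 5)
Your proposal is correct and follows essentially the same route as the paper: the same characteristic-function constructions in both directions, the cycle $(1)\Rightarrow(3)\Rightarrow(4)\Rightarrow(2)\Rightarrow(1)$, and the appeal to the collapse of $S_1$, $S_{fin}$ and $U_{fin}$ for $(\mathcal{B}_{\Gamma},\mathcal{B}_{\Omega})$ from \cite{scts} (the paper additionally gives a direct, but redundant, proof of $(1)\Leftrightarrow(2)$). The only caveat is that $U_{fin}(\Gamma_x,\Omega_x)$ in this paper is the weaker ``per-coordinate'' statement, so your $(4)\Rightarrow(2)$ really yields $U_{fin}(\mathcal{B}_{\Gamma},\mathcal{B}_{\Omega})$ rather than $S_{fin}(\mathcal{B}_{\Gamma},\mathcal{B}_{\Omega})$; this is harmless since the cited equivalence includes the $U_{fin}$ form.
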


\begin{proof}

$(1)\Rightarrow(2)$. Let $B(X)$ $\models$
$S_{1}(\Gamma_x,\Omega_x)$ and $\{\mathcal{F}_i : i\in
\omega\}\subset \mathcal{B}_{\Gamma}$. Let
$\mathcal{F}_i=\{F_{i,k} : k\in \omega\}$ for each $i\in \omega$
and $f=\bf{0}$. For each $i\in \omega$ we consider a sequence
$\{f_{i,k}\}_{k\in \omega}$ such that $f_{i,k}\upharpoonright
F_{i,k}=0$ and $f_{i,k}\upharpoonright (X\setminus F_{i,k})=1$ for
each $k\in \omega$.

Since $\mathcal{F}_i$ is a Baire $\gamma$-cover of $X$, we have
that $\{f_{i,k}\}_{k\in \omega}$ converge to $f$ for each $i\in
\omega$.

Since $B(X)$ $\models$ $S_{1}(\Gamma_x,\Omega_x)$, there is a
sequence $\{f_{i,k(i)}\}_{i\in\omega}$ such that for each $i$,
$f_{i,k(i)}\in \{f_{i,k} : k\in \omega\}$, and $\{f_{i,k(i)}:
i\in\omega \}$ is an element of $\Omega_{f}$.

Consider a sequence $\{F_{i,k(i)}: i\in \omega\}$.

(a). $F_{i,k(i)}\in \mathcal{F}_{i}$.

(b). $\{F_{i,k(i)}: i\in \omega\}$ is a $\omega$-cover of $X$.

Let $K$ be a finite subset of $X$ and $U=<f, K, \frac{1}{2}>$ be a
base neighborhood of $f$, then there is $f_{i_{j_0},k(i_{j_0})}\in
U$. It follows that $K\subset F_{i_{j_0},k(i_{j_0})}$. We thus get
$X$ $\models$ $S_{1}(B_{\Gamma}, B_{\Omega})$.

$(2)\Rightarrow(1)$. Let $\{f_{k,i}\}_{k\in \omega}$ be a sequence
converge to $f$ for each $i\in \omega$. Without loss of generality
we can assume that $f=\bf{0}$ and a set $W^{i}_{k}=\{x\in X:
-\frac{1}{i}< f_{k,i}(x)< \frac{1}{i}\}\neq X$ for any $i\in
\omega$.

Consider $\mathcal{V}_i=\{W^{i}_{k} : k\in \omega\}$ for each
$i\in \omega$.

(a). $\mathcal{V}_i$ is Baire cover of $X$.

(b). $\mathcal{V}_i$ is $\gamma$-cover of $X$. Since
$\{f_{k,i}\}_{k\in \omega}$ converge to $f$, for each finite set
$K\subset X$ there is $k_0\in \omega$ such that  $f_{k,i}\in <f,
K, \frac{1}{i}
>$ for $k>k_0$. It follows that $K\subset W^{i}_{k}$ for any
$k>k_0$.

By $X$ $\models$ $S_{1}(\mathcal{B}_{\Gamma},
\mathcal{B}_{\Omega})$, there is a sequence $\{W^{i}_{k(i)}\}_{
i\in\omega}$ such that for each $i$, $W^{i}_{k(i)}\in
\mathcal{V}_i$, and $\{W^{i}_{k(i)}: i\in\omega \}$ is an element
of $\mathcal{B}_{\Omega}$.

We claim that $f\in \overline{\{f_{k(i),i} : i\in \omega \}}$. Let
$U=<f, K, \epsilon>$ be a base neighborhood of $f$ where
$\epsilon>0$ and $K$ is a finite subset of $X$, then there are
$i_0, i_1\in \omega$ such that $\frac{1}{i_0}<\epsilon$, $i_1>i_0$
and $W^{i_1}_{k(i_1)}\supset K$. It follows that
$f_{k(i_1),i_1}\in <f, K, \epsilon>$ and, hence, $f\in
\overline{\{f_{k(i),i} : i\in \omega \}}$.

$(1)\Rightarrow(3)\Rightarrow(4)$ is immediate.

 $(4)\Rightarrow(2)$. Let $\{\mathcal{U}_i: i\in \omega\}\subset \mathcal{B}_{\Gamma}$. Let $\mathcal{U}_i=\{U^m_i : m\in \omega\}$
 we consider $\mathcal{K}_i =\{ f^m_i\in
B(X) : f^m_i\upharpoonright U^{m}_i=0$ and $f^m_i\upharpoonright
(X\setminus U^{m}_i)=1$ for $m \in \omega \}$. Since
$\mathcal{U}_i$ is a $\gamma$-cover of $X$, we have that
$\mathcal{K}_i$ converge to $\bf{0}$ for each $i\in \omega$. By
$B(X)$ $\models$ $U_{fin}(\Gamma_x,\Omega_x)$, there are finite
sets $F_i=\{f^{m_1}_i, ..., f^{m_{s(i)}}_i\}\subseteq
\mathcal{K}_i$ such that $\{\bigcup F_i : i\in \omega\}\in
\Omega_{0}$. Note that $\{\bigcup \{U^{m_1}_i, ...,
U^{m_{s(i)}}_i\}  : i\in \omega\}\in B_{\Omega}$. It follows that
$X$ $\models$
$U_{fin}(\mathcal{B}_{\Gamma},\mathcal{B}_{\Omega})$. By Theorem 9
in \cite{scts},
$S_1(\mathcal{B}_{\Gamma},\mathcal{B}_{\Omega})\Leftrightarrow
S_{fin}(\mathcal{B}_{\Gamma},\mathcal{B}_{\Omega})\Leftrightarrow
U_{fin}(\mathcal{B}_{\Gamma},\mathcal{B}_{\Omega})$.

\end{proof}

\begin{theorem}\label{th111} For a Tychonoff space $X$, the following statements are
equivalent:

\begin{enumerate}

\item $B(X)$ $\models$ $S_{1}(\mathcal{S},\mathcal{D})$ and it is
sequentially separable;

\item $X$ $\models$ $S_{1}(\mathcal{B}_{\Gamma},
\mathcal{B}_{\Omega})$ and $X$ $\models$ $O$;

\item $B(X)$ $\models$ $S_{1}(\Gamma_x,\Omega_x)$ and it is
sequentially separable;

\item $B(X)$ $\models$ $S_{1}(\mathcal{S},\Omega_x)$ and it is
sequentially separable;

\item $B(X)$ $\models$ $S_{fin}(\mathcal{S},\mathcal{D})$ and it
is sequentially separable;

\item $B(X)$ $\models$ $U_{fin}(\mathcal{S},\mathcal{D})$ and it
is sequentially separable;

\item $B(X)$ $\models$ $S_{fin}(\mathcal{S},\Omega_x)$ and it is
sequentially separable;

\item $B(X)$ $\models$ $U_{fin}(\mathcal{S},\Omega_x)$ and it is
sequentially separable.
\end{enumerate}

\end{theorem}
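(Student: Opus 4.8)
The plan is to establish a cycle of implications among the eight conditions, using the already-proved equivalences as black boxes and exploiting the general implications recorded in the introduction. The backbone is Theorem \ref{th333}, which tells us that for $B(X)$ the four properties $S_1(\Gamma_x,\Omega_x)$, $S_{fin}(\Gamma_x,\Omega_x)$, $U_{fin}(\Gamma_x,\Omega_x)$ and $X\models S_1(\mathcal B_\Gamma,\mathcal B_\Omega)$ all coincide; and Theorem \ref{th37}, which says $B(X)$ is sequentially separable iff $X\models O$. Granting these, the equivalence $(2)\Leftrightarrow(3)$ is immediate, and $(2)\Leftrightarrow$"(the cover part of) $(1),(4),(5),(6),(7),(8)$" will follow once we pin down that, in the presence of sequential separability, all the selection hypotheses $S_1(\mathcal S,\mathcal D)$, $S_1(\mathcal S,\Omega_x)$, $S_{fin}(\mathcal S,\mathcal D)$, $U_{fin}(\mathcal S,\mathcal D)$, $S_{fin}(\mathcal S,\Omega_x)$, $U_{fin}(\mathcal S,\Omega_x)$ are equivalent to $S_1(\Gamma_x,\Omega_x)$.

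First I would record the trivial arrows. Within each "$\mathcal D$/$\Omega_x$ row" the standard diagram gives $S_1\Rightarrow S_{fin}\Rightarrow U_{fin}$, and passing from a $\Omega_x$-target to a $\mathcal D$-target (and from an $\mathcal S$-source to a $\Gamma_x$-source, when a countable sequentially dense set is present) is also free; this collapses $(1),(4),(5),(6),(7),(8)$ to a single chain of implications once sequential separability is assumed, with $(3)$ at the strong end via $\Gamma_x\subseteq\mathcal S$ and $\Omega_x\subseteq\mathcal D$. So the substantive work is the single arrow closing the cycle: from the weakest of these — say $(6)$, $B(X)\models U_{fin}(\mathcal S,\mathcal D)$ together with sequential separability — back to $(2)$, i.e. $X\models S_1(\mathcal B_\Gamma,\mathcal B_\Omega)$ (equivalently $(3)$).

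For that arrow I would imitate the $(4)\Rightarrow(2)$ argument inside the proof of Theorem \ref{th333}. Given Baire $\gamma$-covers $\mathcal U_i=\{U^m_i:m\in\omega\}$ of $X$, form for each $i$ the functions $f^m_i$ with $f^m_i\restriction U^m_i=0$ and $f^m_i\restriction(X\setminus U^m_i)=1$; since $\mathcal U_i$ is a $\gamma$-cover, $\{f^m_i:m\in\omega\}$ converges to $\mathbf 0$, so each $\{f^m_i:m\in\omega\}$ is a (countable) sequentially dense set only after we enlarge it — here is the one real subtlety: $\{f^m_i:m\}$ converges to $\mathbf 0$ but need not be dense. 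I would fix a fixed countable sequentially dense $D=\{d_n:n\in\omega\}$ (using sequential separability) and replace $\mathcal K_i$ by $D\cup\{f^m_i:m\in\omega\}$, which is sequentially dense and still "mostly" inside the level sets of $\mathcal U_i$; applying $U_{fin}(\mathcal S,\mathcal D)$ yields finite $F_i\subseteq\mathcal K_i$ with $\bigcup_i F_i\in\mathcal D$, and testing density at $\mathbf 0$ with neighborhoods $\langle\mathbf 0,K,\tfrac12\rangle$ forces $K\subseteq\bigcup\{U^m_i:f^m_i\in F_i\}$ for some $i$, so the corresponding finite unions form a Baire $\omega$-cover; hence $X\models U_{fin}(\mathcal B_\Gamma,\mathcal B_\Omega)$, and Theorem 9 of \cite{scts} upgrades this to $S_1(\mathcal B_\Gamma,\mathcal B_\Omega)$.

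The main obstacle is precisely this density bookkeeping: $U_{fin}(\mathcal S,\mathcal D)$ only speaks about sequentially dense sets, so the "witness" functions $f^m_i$ must be woven together with a genuine countable sequentially dense subset without destroying the property that the selected finite pieces still live in the level sets $\{x:|g(x)|<\tfrac12\}=U^m_i$. The clean way around it is to note that adjoining the fixed $D$ (whose members we can also choose, by homogeneity and by Theorem \ref{th37}, to be controlled on a prescribed finite set) does not affect the argument, since at the end we only need \emph{some} member of $F_{n'}$ to be close to $\mathbf 0$ at each coordinate of $K$, and the $f^m_i$ that achieve this are exactly the ones carrying the cover information. Finally, $(2)\Leftrightarrow(3)$ is Theorem \ref{th333}, "$X\models O$" is Theorem \ref{th37}, and the remaining equivalences among $(1),(4),(5),(6),(7),(8)$ follow from the implication diagram together with the already-established equivalence of their "cover parts" with $(3)$.
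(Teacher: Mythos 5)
Your skeleton is the same as the paper's: collapse $(1),(4),(5),(6),(7),(8)$ into a chain of cheap implications, use Theorem \ref{th333} and Theorem \ref{th37} for $(2)\Leftrightarrow(3)$, and close the cycle with one substantive arrow from the weakest $U_{fin}$-type hypothesis back to $X\models S_1(\mathcal{B}_\Gamma,\mathcal{B}_\Omega)$, finishing with Theorem 9 of \cite{scts}. You also correctly locate the one real difficulty: the characteristic-function witnesses $f^m_i$ of a $\gamma$-cover form a set converging to $\mathbf{0}$, not a sequentially dense set, so they are not a legal input for $U_{fin}(\mathcal{S},\mathcal{D})$.

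However, your proposed repair --- feeding in $D\cup\{f^m_i:m\in\omega\}$ for a fixed countable sequentially dense $D$ --- has a genuine gap. After selecting finite $F_i$ with $\bigcup_i F_i$ dense, you test density at $\mathbf{0}$ on a finite set $K$ with $\epsilon=\tfrac12$; but the members of $F_{i}$ that witness this may all come from $D$, and a function $d\in D$ with $|d|<\tfrac12$ on $K$ gives no containment $K\subseteq U^m_i$ whatsoever. This cannot be avoided: any dense subset of $B(X)$ must meet every basic neighborhood of $\mathbf{0}$, so no choice of $D$ is ``bounded away from $\mathbf{0}$,'' and your parenthetical suggestion to choose $D$ ``controlled on a prescribed finite set'' fails because $K$ is universally quantified \emph{after} the selection has been made. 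The paper's construction is different and is the essential idea you are missing: fix a countable sequentially dense $\mathcal{S}=\{h_m\}$ and define $f^m_i$ to equal $h_m$ on $F^m_i$ and to equal $1$ on $X\setminus F^m_i$. Then $\mathcal{S}_i=\{f^m_i:m\in\omega\}$ is itself sequentially dense (if $h_{m_s}\to h$, then for any finite $K$ eventually $K\subseteq F^{m_s}_i$, so $f^{m_s}_i$ agrees with $h_{m_s}$ on $K$ and $f^{m_s}_i\to h$), and \emph{every} element of $\mathcal{S}_i$ carries cover information: $|f^m_i(x)|<\tfrac12$ forces $x\in F^m_i$ because $f^m_i\equiv 1$ off $F^m_i$. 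With that substitution the rest of your argument goes through as in the paper.
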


\begin{proof} $(1)\Rightarrow(5)\Rightarrow(6)$ is immediate.

$(6)\Rightarrow(2)$. Let $\{\mathcal{F}_i : i\in \omega\}\subset
\mathcal{B}_{\Gamma}$ where $\mathcal{F}_i=\{ F^{m}_i : m\in
\omega\}$  and $\mathcal{S}=\{h_m : m\in \omega\}$ be a countable
sequentially dense subset of $B(X)$. For each $i\in \omega$ we
consider a countable sequentially dense subset $\mathcal{S}_i$ of
$B(X)$. where $\mathcal{S}_i :=\{ f^m_i\in B(X) :
f^m_i\upharpoonright F^{m}_i=h_m$ and $f^m_i\upharpoonright
(X\setminus F^{m}_i)=1$ for $m \in \omega \}$.

Since $\mathcal{F}_i$ is a Baire $\gamma$-cover of $X$ and
$\mathcal{S}$ is a countable sequentially dense subset of $B(X)$,
we have that $\mathcal{S}_i$ is a  countable sequentially dense
subset of $B(X)$ for each $i\in \omega$.  Let $h\in B(X)$, there
is a sequence $\{h_{m_s}\}_{s\in \omega}$ such that $\{h_{m_s} :
s\in \omega\}\subset \mathcal{S}$ and $\{h_{m_s}\}_{s\in \omega}$
converge to $h$.
 Let $K$ be a finite subset of $X$, $\epsilon>0$ and $W=<h, K,\epsilon>$ be
a base neighborhood of $h$, then there is a number $m_0$ such that
$K\subset F^{m}_i $ for $m>m_0$ and $h_{m_s}\in W$ for $m_s>m_0$.
Since $f^{m_s}_i\upharpoonright K= h_{m_s}\upharpoonright K$ for
each $m_s>m_0$, $f^{m_s}_i\in W$ for each $m_s>m_0$. It follows
that a sequence $\{f^{m_s}_i\}_{s\in \omega}$ converge to $h$.

By $B(X)$ $\models$ $U_{fin}(\mathcal{S},\mathcal{D})$,  there are
finite sets $F_i=\{f^{m_1}_i, ..., f^{m_{s(i)}}_i\}\subseteq
\mathcal{S}_i$ such that $\{\bigcup F_i : i\in \omega\}\in
\mathcal{D}$.

Consider a sequence $\{F^{m_1}_{i}, ..., F^{m_{s(i)}}_{i} : i\in
\omega\}$.

(a). $F^{m_k}_{i}\in \mathcal{F}_{i}$ for $k\in
\overline{1,s(i)}$.

(b). $\{\bigcup\limits_{k=1}^{s(i)} F^{m_k}_{i}: i\in \omega\}$ is
a $\omega$-cover of $X$.

Let $K$ be a finite subset of $X$ and $U=<$ $\bf{0}$ $, K,
\frac{1}{2}>$ be a base neighborhood of $\bf{0}$, then there is
$F_i$ such that $K\subset \bigcup\limits_{k=1}^{s(i)}
F^{m_k}_{i}$. We thus get $X$ $\models$ $U_{fin}(B_{\Gamma},
B_{\Omega})$. By Theorem 9 in \cite{scts},
 $S_1(\mathcal{B}_{\Gamma},\mathcal{B}_{\Omega})\Leftrightarrow
S_{fin}(\mathcal{B}_{\Gamma},\mathcal{B}_{\Omega})\Leftrightarrow
U_{fin}(\mathcal{B}_{\Gamma},\mathcal{B}_{\Omega})$.

$(2)\Rightarrow(3)$. Let $X$ $\models$ $S_{1}(B_{\Gamma},
B_{\Omega})$ and $\{f_{i,m}\}_{m\in \omega}$ converge to $\bf{0}$
for each $i\in \omega$.

Consider $\mathcal{F}_i=\{F_{i,m} : m\in
\omega\}=\{f^{-1}_{i,m}(-\frac{1}{i}, \frac{1}{i}): m\in \omega
\}$ for each $i\in \omega$. Without loss of generality we can
assume that a set $F_{i,m}\neq X$ for any $i,m\in \omega$.
Otherwise there is sequence $\{f_{i_k,m_k}\}_{k\in \omega}$ such
that $\{f_{i_k,m_k}\}_{k\in \omega}$ uniform converge to $\bf{0}$
and $\{f_{i_k,m_k}: k\in \omega\}\in \Omega_{\bf 0}$.

Note that $\mathcal{F}_i$ is Baire $\gamma$-cover of $X$ for each
$i\in \omega$. By $X$ $\models$ $S_{1}(\mathcal{B}_{\Gamma},
\mathcal{B}_{\Omega})$, there is a sequence $\{F_{i,m(i)}:
i\in\omega\}$ such that for each $i$, $F_{i,m(i)}\in
\mathcal{F}_i$, and $\{F_{i,m(i)}: i\in\omega \}$ is an element of
$\mathcal{B}_{\Omega}$.

We claim that $\bf 0$ $\in \overline{\{f_{i,m(i)} : i\in \omega
\}}$. Let $W=<$ $\bf 0$ $, K, \epsilon>$ be a base neighborhood of
$\bf 0$ where $\epsilon>0$ and $K$ is a finite subset of $X$, then
there are $i_0, i_1\in \omega$ such that $\frac{1}{i_0}<\epsilon$,
$i_1>i_0$ and $F_{i_1,m(i_1)}\supset K$. It follows that $f_{i_1,
m(i_1)}\in <$ $\bf 0$ $, K, \epsilon>$ and, hence, $\bf 0$ $\in
\overline{\{f_{i,m(i)} : i\in \omega \}}$ and $B(X)$ $\models$
$S_{1}(\Gamma_x,\Omega_x)$.

$(3)\Rightarrow(4)$ is immediate.

$(4)\Rightarrow(1)$. Suppose that $B(X)$ is a sequentially
separable and  $B(X)$ $\models$ $S_{1}(\mathcal{S},\Omega_x)$. Let
$D=\{d_n: n\in \omega \}$ be a dense subspace of $B(X)$. Given a
sequence of sequentially dense subspaces of $B(X)$, enumerate it
as $\{S_{n,m}: n,m \in \omega \}$. For each $n\in \omega$, pick
$d_{n,m}\in S_{n,m}$ so that $d_n\in \overline{\{d_{n,m}: m\in
\omega\}}$. Then $\{d_{n,m}: m,n\in \omega\}$ is dense in $B(X)$.

$(4)\Rightarrow(7)\Rightarrow(8)$ is immediate.

$(8)\Rightarrow(6)$. Similarly to the implication
$(4)\Rightarrow(1)$.

\end{proof}

\section{$B(X)$ $\models$ $S_1(\mathcal{S},\mathcal{S})$}

Recall that $U_{fin}(\mathcal{S},\mathcal{S})$ is the selection
hypothesis:
 whenever $\mathcal{U}_1$, $\mathcal{U}_2, ... \in \mathcal{S}$, there are finite sets $\mathcal{F}_n\subseteq \mathcal{U}_n$,
$n\in \omega$, such that $\{\bigcup \mathcal{F}_n : n\in
\omega\}\in \mathcal{S}$. For a function space $B(X)$, we can
represent the condition  $\{\bigcup \mathcal{F}_n : n\in
\omega\}\in \mathcal{S}$ as $\forall$ $f\in B(X)$ $\forall$ a base
neighborhood $O(f)=<f, K, \epsilon >$ of $f$  where $\epsilon>0$
and $K=\{x_1, ..., x_k\}$ is a finite subset of $X$, there is
$n'\in \omega$ such that for each $n>n'$ and $j\in \{1,...,k\}$
there is $g\in \mathcal{F}_{n}$ such that $g(x_j)\in
(f(x_j)-\epsilon, f(x_j)+\epsilon)$.

Similarly, $U_{fin}(\Gamma_0,\Gamma_0)$: whenever $\mathcal{S}_1$,
$\mathcal{S}_2, ... \in \Gamma_0$, there are finite sets
$\mathcal{F}_n\subseteq \mathcal{S}_n$, $n\in \omega$, such that
$\{\bigcup \mathcal{F}_n : n\in \omega\}\in \Gamma_0$, i.e. for a
base neighborhood $O(f)=<f, K, \epsilon
>$ of $f={\bf 0}$  where $\epsilon>0$
and $K=\{x_1, ..., x_k\}$ is a finite subset of $X$, there is
$n'\in \omega$ such that for each $n>n'$ and $j\in \{1,...,k\}$
there is $g\in \mathcal{F}_{n}$ such that $g(x_j)\in
(f(x_j)-\epsilon, f(x_j)+\epsilon)$.

\begin{theorem}\label{th445} For a Tychonoff space $X$, the following statements are
equivalent:

\begin{enumerate}

\item  $B(X)$ $\models$ $S_{1}(\Gamma_x, \Gamma_x)$;

\item $X$ $\models$ $S_{1}(\mathcal{B}_{\Gamma},
\mathcal{B}_{\Gamma})$;

\item  $B(X)$ $\models$ $S_{fin}(\Gamma_x, \Gamma_x)$;

\item  $B(X)$ $\models$ $U_{fin}(\Gamma_x, \Gamma_x)$.

\end{enumerate}

\end{theorem}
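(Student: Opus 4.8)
The plan is to follow the same template as Theorems~\ref{th24}, \ref{th34}, \ref{th66} and the first two equivalences of Theorem~\ref{th333}, namely to establish the cycle $(1)\Rightarrow(2)\Rightarrow(1)$ and then observe that the remaining implications $(1)\Rightarrow(3)\Rightarrow(4)$ are immediate, while $(4)\Rightarrow(2)$ closes the loop via the known collapse $S_1(\mathcal{B}_\Gamma,\mathcal{B}_\Gamma)=S_{fin}(\mathcal{B}_\Gamma,\mathcal{B}_\Gamma)=U_{fin}(\mathcal{B}_\Gamma,\mathcal{B}_\Gamma)$ from \cite{scts}. In fact, since $S_1(\mathcal B_\Gamma,\mathcal B_\Gamma)$ and $S_{fin}$ and $U_{fin}$ versions all coincide on the cover side, it suffices to prove $(1)\Leftrightarrow(2)$, and then $(1)\Rightarrow(3)\Rightarrow(4)\Rightarrow(2)$ via the same coding argument used for $(4)\Rightarrow(2)$ in Theorem~\ref{th333}; the $U_{fin}$-to-$S_1$ direction on the space side is obtained by transporting finite unions of coding functions to finite unions of cover elements and invoking \cite{scts}.

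For $(1)\Rightarrow(2)$: given a sequence $\{\mathcal F_i:i\in\omega\}\subset\mathcal B_\Gamma$, write $\mathcal F_i=\{F_{i,k}:k\in\omega\}$ and define coding functions $f_{i,k}\in B(X)$ with $f_{i,k}\upharpoonright F_{i,k}=0$ and $f_{i,k}\upharpoonright(X\setminus F_{i,k})=1$; since each $\mathcal F_i$ is a Baire $\gamma$-cover, $\{f_{i,k}\}_{k\in\omega}$ converges to $f=\mathbf 0$, so $\{f_{i,k}:k\in\omega\}\in\Gamma_{\mathbf 0}$. Apply $S_1(\Gamma_x,\Gamma_x)$ at $f=\mathbf 0$ to get $f_{i,k(i)}$ with $\{f_{i,k(i)}:i\in\omega\}\in\Gamma_{\mathbf 0}$, i.e. the sequence converges to $\mathbf 0$. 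Then $\{F_{i,k(i)}:i\in\omega\}$ is a Baire $\gamma$-cover of $X$: for a finite $K\subset X$, the neighborhood $U=\langle\mathbf 0,K,\tfrac12\rangle$ contains $f_{i,k(i)}$ for all but finitely many $i$, and $f_{i,k(i)}\in U$ forces $K\subset F_{i,k(i)}$; hence each point of $X$ (in particular each finite set) lies in all but finitely many $F_{i,k(i)}$, so this is a $\gamma$-cover, giving $X\models S_1(\mathcal B_\Gamma,\mathcal B_\Gamma)$.

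For $(2)\Rightarrow(1)$: reverse the construction exactly as in $(2)\Rightarrow(1)$ of Theorem~\ref{th333}. Given sequences $\{f_{i,m}\}_{m\in\omega}$ converging to $\mathbf 0$ (WLOG $f=\mathbf 0$ by homogeneity of $B(X)$), set $W^i_m=f_{i,m}^{-1}(-\tfrac1i,\tfrac1i)$ and $\mathcal V_i=\{W^i_m:m\in\omega\}$. Each $\mathcal V_i$ is a Baire cover, and it is a $\gamma$-cover because convergence of $\{f_{i,m}\}_m$ means every finite $K\subset X$ eventually sits inside $W^i_m$; discard the trivial case $W^i_m=X$ (which yields uniform convergence along a subsequence). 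Apply $S_1(\mathcal B_\Gamma,\mathcal B_\Gamma)$ to extract $W^i_{m(i)}\in\mathcal V_i$ with $\{W^i_{m(i)}:i\in\omega\}\in\mathcal B_\Gamma$; then for a base neighborhood $\langle\mathbf 0,K,\epsilon\rangle$ choose $i_0$ with $\tfrac1{i_0}<\epsilon$ and note that $K\subset W^i_{m(i)}$ for all but finitely many $i$, so $f_{i,m(i)}\in\langle\mathbf 0,K,\epsilon\rangle$ for all but finitely many $i$ with $i\ge i_0$, i.e. $\{f_{i,m(i)}\}_i$ converges to $\mathbf 0$. This is precisely $S_1(\Gamma_x,\Gamma_x)$. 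The main obstacle is purely bookkeeping: one must make sure the selected cover is actually a $\gamma$-cover (not merely an $\omega$-cover), which is where the $\gamma$-hypothesis on the extracted cover and the ``all but finitely many'' quantifier must be used carefully in both directions; everything else is a routine transcription of the earlier proofs, together with the cited equivalence $S_1(\mathcal B_\Gamma,\mathcal B_\Gamma)\Leftrightarrow S_{fin}(\mathcal B_\Gamma,\mathcal B_\Gamma)\Leftrightarrow U_{fin}(\mathcal B_\Gamma,\mathcal B_\Gamma)$ from \cite{scts} to absorb items (3) and (4).
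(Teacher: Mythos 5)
Your proposal is correct and follows essentially the same route as the paper: coding functions $f_{i,k}$ that are $0$ on $F_{i,k}$ and $1$ off it for the cover-to-function direction, preimages $f_{i,m}^{-1}(-\tfrac1i,\tfrac1i)$ for the converse, the immediate chain $(1)\Rightarrow(3)\Rightarrow(4)$, and the collapse $S_1(\mathcal{B}_{\Gamma},\mathcal{B}_{\Gamma})=S_{fin}(\mathcal{B}_{\Gamma},\mathcal{B}_{\Gamma})=U_{fin}(\mathcal{B}_{\Gamma},\mathcal{B}_{\Gamma})$ from \cite{scts} to absorb $(4)\Rightarrow(2)$. Your direct $S_1$-version of $(1)\Rightarrow(2)$ is logically redundant given the cycle (the paper only proves the $U_{fin}$ version $(4)\Rightarrow(2)$), but it is a correct specialization of the same argument.
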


\begin{proof}

$(1)\Rightarrow(3)\Rightarrow(4)$ is immediate.

$(4)\Rightarrow(2)$. Let $\{\mathcal{U}_i : i\in \omega\}\subset
\mathcal{B}_{\Gamma}$. For each $i\in \omega$ we consider a
 subset $\mathcal{S}_i$ of $B(X)$ and $\mathcal{U}_i=\{
U^{m}_i : m\in \omega\}$ where

$\mathcal{S}_i:=\{ f^m_i\in B(X) : f^m_i\upharpoonright U^{m}_i=0$
and $f^m_i\upharpoonright (X\setminus U^{m}_i)=1$ for $m \in
\omega \}$.

Since $\mathcal{U}_i$ is a $\gamma$-cover of $X$, we have that
$\mathcal{S}_i$ converge to ${\bf 0}$, i.e. $\mathcal{S}_i\in
\Gamma_0$ for each $i\in \omega$.

Since  $B(X)$ $\models$ $U_{fin}(\Gamma_x,\Gamma_x)$, there is a
sequence $\{\mathcal{F}_i\}_{i\in \omega}=\{f^{m_1}_{i},...,
f^{m_{k(i)}}_{i} : i\in\omega\}$ such that for each $i$,
$\mathcal{F}_i\subseteq \mathcal{S}_i$, and $\{\bigcup
\mathcal{F}_i : i\in \omega\}\in \Gamma_0$.

Consider a sequence $\{W_i\}_{i\in \omega}=\{U^{m_1}_{i},
...,U^{m_{k(i)}}_{i} : i\in \omega\}$.

(a). $W_i \subset \mathcal{U}_{i}$.

(b). $\{\bigcup W_i: i\in \omega\}$ is a $\gamma$-cover of $X$.

 Let $K=\{x_1,...,x_s\}$ be a finite subset of $X$ and $U=<$ $\bf{0}$ $,
K, \frac{1}{2}>$ be a base neighborhood of $\bf{0}$, then there
exists $i_0\in \omega$ such that for each $i>i_0$ and

$j\in \{1,...,s\}$ there is $g\in \mathcal{F}_{i}$ such that
$g(x_j)\in (-\frac{1}{2}, \frac{1}{2})$.

 It follows that
$K\subset \bigcup\limits_{j=1}^{k(i)} U^{m_j}_{i}$ for $i>i_0$. We
thus get $X$ $\models$ $U_{fin}(\mathcal{B}_{\Gamma},
\mathcal{B}_{\Gamma})$.

By Theorem 1 in \cite{scts},
$S_1(\mathcal{B}_{\Gamma},\mathcal{B}_{\Gamma})\Leftrightarrow
S_{fin}(\mathcal{B}_{\Gamma},\mathcal{B}_{\Gamma})\Leftrightarrow
U_{fin}(\mathcal{B}_{\Gamma},\mathcal{B}_{\Gamma})$.

$(2)\Rightarrow(1)$. For each $n\in \omega$, let
$\{f_{n,m}\}_{m\in \omega}$ be a sequence of Baire functions on
$X$ converging pointwise to $\bf{0}$. For $n,m\in \omega$, let
$U_{n,m}=\{x\in X: |f_{n,m}(x)|<\frac{1}{2^{n}}\}$.

For each $n\in \omega$, we put $\mathcal{U}_n=\{U_{n,m} : m\in
\omega\}$. If the set $\{n\in \omega: X\in \mathcal{U}_n \}$ is
infinite, $X=U_{n_0,m_0}=U_{n_1,m_1}=... $ for some sequences
$\{n_j\}_{j\in \omega}$ and $\{m_j\}_{j\in \omega}$, where
$\{n_j\}_{j\in \omega}$ is strictly increasing. This means that
$\{f_{n_j,m_j}\}_{j\in \omega}$ is a sequence converging uniformly
to $\bf{0}$. If the set $\{n\in \omega: X\in \mathcal{U}_n \}$ is
finite, by removing such finitely many $n$'s we assume
$U_{n,m}\neq X$ for $n,m\in \omega$. Note that each
$\mathcal{U}_n$ is a $\gamma$-cover of $X$. By $X$ $\models$
$S_{1}(B_{\Gamma}, B_{\Gamma})$, there is a sequence
$\{U_{n,m(n)}\}$ such that $U_{n,m(n)}\in \mathcal{U}_n$ for each
$n\in \omega$ and $\{U_{n,m(n)} : n\in \omega\}$ is a
$\gamma$-cover of $X$. Let $F$ be a finite subset of $X$ and let
$\epsilon$ a positive real number. Because of $U_{n,m(n)}\neq X$
for $n\in \omega$, there is $n'\in \omega$ such that $F\subset
U_{n,m(n)}$ for each $n>n'$ and $\frac{1}{2^{n'}}<\epsilon$. Then
$|f_{n,m(n)}(x)|<\epsilon$ for any $x\in F$ and $n>n'$. Hence
$\{f_{n,m(n)}\}$ converge to $\bf{0}$. Thus $B(X)$ $\models$
$S_{1}(\Gamma_x, \Gamma_x)$.

\end{proof}

\begin{theorem}\label{th444} For a Tychonoff space $X$, the following statements are
equivalent:

\begin{enumerate}

\item $B(X)$ $\models$ $S_{1}(\mathcal{S},\mathcal{S})$ and it is
sequentially separable;

\item $X$ $\models$ $S_{1}(\mathcal{B}_{\Gamma},
\mathcal{B}_{\Gamma})$ and $X$ $\models$ $O$;

\item $B(X)$ $\models$ $S_{1}(\Gamma_x,\Gamma_x)$ and it is
sequentially separable;

\item $B(X)$ $\models$ $S_{1}(\mathcal{S},\Gamma_x)$ and it is
sequentially separable;

\item $B(X)$ $\models$ $S_{fin}(\mathcal{S},\mathcal{S})$ and it
is sequentially separable;

\item $B(X)$ $\models$ $U_{fin}(\mathcal{S},\mathcal{S})$ and it
is sequentially separable;

\item $B(X)$ $\models$ $S_{fin}(\mathcal{S},\Gamma_x)$ and it is
sequentially separable;

\item $B(X)$ $\models$ $U_{fin}(\mathcal{S},\Gamma_x)$ and it is
sequentially separable.

\end{enumerate}

\end{theorem}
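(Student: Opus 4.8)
The plan is to imitate the proof of Theorem~\ref{th111}, systematically replacing $\mathcal{D}$ by $\mathcal{S}$; the genuinely new ingredient is that the target class $\mathcal{S}$ is ``sequential'', so one extra diagonalisation is needed to produce a \emph{sequentially} dense selection rather than merely a dense one. I would arrange the eight conditions around the cycle $(1)\Rightarrow(5)\Rightarrow(6)\Rightarrow(2)\Leftrightarrow(3)\Rightarrow(1)$, together with the side chain $(3)\Rightarrow(4)\Rightarrow(7)\Rightarrow(8)\Rightarrow(2)$, which visibly makes all eight equivalent. The implications $(1)\Rightarrow(5)\Rightarrow(6)$ and $(3)\Rightarrow(4)\Rightarrow(7)\Rightarrow(8)$ are immediate from the trivial chain $S_1\Rightarrow S_{fin}\Rightarrow U_{fin}$ and from the fact (already exploited in Theorem~\ref{th111}) that a countable sequentially dense set contains a sequence converging to any prescribed point, sequential separability being carried along verbatim. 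Finally $(2)\Leftrightarrow(3)$ is term-by-term: $B(X)\models S_1(\Gamma_x,\Gamma_x)\Leftrightarrow X\models S_1(\mathcal{B}_{\Gamma},\mathcal{B}_{\Gamma})$ is Theorem~\ref{th445}, and ``$B(X)$ sequentially separable $\Leftrightarrow X\models O$'' is Theorem~\ref{th37}.

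For $(6)\Rightarrow(2)$ I would repeat the construction from $(6)\Rightarrow(2)$ of Theorem~\ref{th111}. Fixing a countable sequentially dense $\mathcal{S}=\{h_m:m\in\omega\}\subseteq B(X)$ and, for a sequence $\{\mathcal{F}_i=\{F^m_i:m\in\omega\}\}_{i\in\omega}\subseteq\mathcal{B}_{\Gamma}$, setting $\mathcal{S}_i=\{f^m_i:m\in\omega\}$ with $f^m_i\upharpoonright F^m_i=h_m$ and $f^m_i\upharpoonright(X\setminus F^m_i)=1$, each $\mathcal{S}_i$ is again a countable sequentially dense subset of $B(X)$ because each $\mathcal{F}_i$ is a $\gamma$-cover. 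Apply $U_{fin}(\mathcal{S},\mathcal{S})$ to $\{\mathcal{S}_i\}_{i\in\omega}$ and instantiate the resulting diagonal condition at ${\bf 0}$: for every finite $K\subseteq X$ there is $i'$ so that for each $i>i'$ and each $x\in K$ some $g\in F_i$ has $|g(x)|<\frac{1}{2}$, whence (as $g$ equals $1$ off its defining set) the finite subfamily $\mathcal{W}_i\subseteq\mathcal{F}_i$ consisting of the sets $F^m_i$ with $f^m_i\in F_i$ satisfies $K\subseteq\bigcup\mathcal{W}_i$ for all $i>i'$. Thus $\{\bigcup\mathcal{W}_i:i\in\omega\}$ is a Baire $\gamma$-cover and $X\models U_{fin}(\mathcal{B}_{\Gamma},\mathcal{B}_{\Gamma})$, which equals $S_1(\mathcal{B}_{\Gamma},\mathcal{B}_{\Gamma})$ by Theorem~1 in \cite{scts}; the $O$-property follows from sequential separability of $B(X)$ by Theorem~\ref{th37}. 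The implication $(8)\Rightarrow(2)$ is proved identically, using $U_{fin}(\mathcal{S},\Gamma_{\bf 0})$ in place of $U_{fin}(\mathcal{S},\mathcal{S})$, since only the ${\bf 0}$-instance of the diagonal condition is used.

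The main obstacle is $(3)\Rightarrow(1)$: unlike its counterpart in Theorem~\ref{th111}, which needed only density, here I must fabricate a sequentially dense selection, and for this I use that $B(X)$ is a topological group under pointwise addition. Fix a countable sequentially dense $S=\{d_n:n\in\omega\}$ and, given a sequence of countable sequentially dense subsets of $B(X)$, re-index it as $\{D_{n,m}:n,m\in\omega\}$. For each $n$, the consequence $S_1(\mathcal{S},\Gamma_{d_n})$ of $(3)$ provides $t_{n,m}\in D_{n,m}$ with $\{t_{n,m}:m\in\omega\}$ converging to $d_n$; put $T=\{t_{n,m}:n,m\in\omega\}$, a selection from the given sequence. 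To see that $T$ is sequentially dense, fix $f\in B(X)$: if $f=d_n$ for some $n$ then $\{t_{n,m}:m\in\omega\}$ already witnesses $f\in[T]_{seq}$, and otherwise pick by sequential separability distinct $d_{n_k}$ ($k\in\omega$) converging to $f$, observe that each $E_k=\{t_{n_k,m}-d_{n_k}:m\in\omega\}$ belongs to $\Gamma_{\bf 0}$, and apply $S_1(\Gamma_{\bf 0},\Gamma_{\bf 0})$ to $\{E_k\}_{k\in\omega}$ to get $e_k\in E_k$ with $\{e_k:k\in\omega\}\in\Gamma_{\bf 0}$. Writing $e_k=t_{n_k,m(k)}-d_{n_k}$ and passing to an injective enumeration of the infinite set $\{e_k:k\in\omega\}$, continuity of addition yields a sequence inside $\{t_{n_k,m(k)}:k\in\omega\}\subseteq T$ converging to ${\bf 0}+f=f$, so $f\in[T]_{seq}$. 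Hence $B(X)\models S_1(\mathcal{S},\mathcal{S})$, and $B(X)$ remains sequentially separable. The only friction in this last step is the standard bookkeeping — keeping the auxiliary sequences genuine and the $\Gamma$-sets infinite with limit point excluded — which is automatic since every member of $\Gamma_x$ is by definition infinite and omits $x$.
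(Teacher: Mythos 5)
Your proposal is correct, and the easy implications ($(1)\Rightarrow(5)\Rightarrow(6)$, $(3)\Rightarrow(4)\Rightarrow(7)\Rightarrow(8)$, the cover-extraction arguments for $(6)\Rightarrow(2)$ and $(8)\Rightarrow(2)$, and $(2)\Leftrightarrow(3)$ via Theorems~\ref{th445} and~\ref{th37}) coincide with the paper's. Where you genuinely diverge is the hard direction. The paper closes the cycle with $(2)\Rightarrow(1)$: it builds an auxiliary separable metrizable topology $Y=(X,\tau)$ generated by preimages of open sets under the countably many given functions, invokes Bukovsk\'{y}--\v{S}upina to see that $Y$ is a $QN$-space and Tsaban--Zdomskyy to get that $B(Y)$ is $\alpha_2$, selects $B$ with $S\subseteq[B]_{seq}$, and then needs a second, separate appeal to the $QN$-property (Theorem~16 of \cite{busu}) to resolve the iterated-limit problem and conclude $[B]_{seq}=B(X)$. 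You instead prove $(3)\Rightarrow(1)$ entirely inside $B(X)$: after selecting $t_{n,m}\in D_{n,m}$ with $\{t_{n,m}:m\in\omega\}\in\Gamma_{d_n}$ via $(4)$, you translate each row by $-d_{n_k}$ so that all auxiliary sets land in $\Gamma_{\bf 0}$, apply $S_{1}(\Gamma_{\bf 0},\Gamma_{\bf 0})$ (available from Theorem~\ref{th445}) once, and recover convergence to $f$ from joint continuity of addition; the only care needed is passing to an injective enumeration of the selected $\Gamma_{\bf 0}$-set along increasing indices so that the paired subsequence of $(d_{n_k})$ still converges to $f$, which you note. This replaces the external $QN$-space and sheaf-amalgamation machinery by the $\alpha_2$-property of $B(X)$ itself together with its topological-group structure, giving a shorter and more self-contained argument; the paper's detour, on the other hand, routes the proof through the $QN$-space characterization of $Y$, which carries independent information. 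Both arguments rely on the same tacit convention (also used throughout the paper, e.g.\ in ``$(3)\Rightarrow(4)$ is immediate'') that a countable sequentially dense set supplies, at each point $g$, a member of $\Gamma_g$, i.e.\ an infinite witnessing sequence omitting $g$; your proof is no more exposed on this point than the paper's.
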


\begin{proof}

$(1)\Rightarrow(2)$.  Let $\{\mathcal{F}_i: i\in \omega \}\subset
\mathcal{B}_{\Gamma}$ where $\mathcal{F}_i=\{ F^{m}_i : m\in
\omega\}$ and $\mathcal{S}=\{h_k : k\in \omega\}$ be a countable
sequentially dense subset of $B(X)$. For each $i\in \omega$ we
consider a countable sequentially dense subset $\mathcal{S}_i$ of
$B(X)$ where $\mathcal{S}_i:=\{ f\in B(X) : f\upharpoonright
F^{m}_i=h_m$ and $f\upharpoonright (X\setminus F^{m}_i)=1$ for $m
\in \omega \}$.

Since $\mathcal{F}_i$ is a Baire $\gamma$-cover of $X$ and
$\mathcal{S}$ is a countable sequentially dense subset of $B(X)$,
we have that $\mathcal{S}_i$ is a countable sequentially dense
subset of $B(X)$ for each $i\in \omega$.

Let $h\in B(X)$, there is a sequence $\{h_s\}_{s\in \omega}\subset
\mathcal{S}$ such that $\{h_s\}_{s\in \omega}$ converge to $h$.
 Let $K$ be a finite subset of $X$ and $W=<h, K,\epsilon>$ be a base
neighborhood of $h$, then there is a number $m_0$ such that
$K\subset F^{m}_i $ for $m>m_0$ and $h_s\in W$ for $s>m_0$. Since
$f^s_i\upharpoonright K= h_s\upharpoonright K$ for each $s>m_0$,
$f^s_i\in W$ for each $s>m_0$. It follows that a sequence
$\{f^s_i\}_{s\in \omega}$ converge to $h$.

 Since $B(X)$ $\models$ $S_{1}(\mathcal{S},\mathcal{S})$, there is a set $\{f^{m(i)}_{i}: i\in\omega\}$ such
that for each $i$, $f^{m(i)}_{i}\in \mathcal{S}_i$, and
$\{f^{m(i)}_{i}: i\in\omega \}$ is an element of $\mathcal{S}$.

For $\bf{0}$ $\in B(X)$ there is a set $\{f^{m(i)_j}_{i_j}\}_{j\in
\omega}\subset \{f^{m(i)}_{i}: i\in\omega\}$ such that
$\{f^{m(i)_j}_{i_j}\}_{j\in \omega}$ converge to $\bf{0}$.
Consider a set $\{F^{m(i)_j}_{i_j}: j\in \omega\}$.

(1) $F^{m(i)_j}_{i_j}\in \mathcal{F}_{i_j}$.

(2) $\{F^{m(i)_j}_{i_j}: j\in \omega\}$ is a $\gamma$-cover of
$X$.

Let $K$ be a finite subset of $X$ and $U=<$ $\bf{0}$ $, K,
\frac{1}{2}>$ be a base neighborhood of $\bf{0}$, then there is a
number $j_0$ such that $f^{m(i)_j}_{i_j}\in U$ for any $j>j_0$. It
follows that $K\subset F^{m(i)_j}_{i_j}$ for any $j>j_0$. We thus
get $X\in U_{fin}(\mathcal{B}_{\Gamma}, \mathcal{B}_{\Gamma})$.
Note that $U_{fin}(\mathcal{B}_{\Gamma},
\mathcal{B}_{\Gamma})=S_1(\mathcal{B}_{\Gamma},
\mathcal{B}_{\Gamma})=S_{fin}(\mathcal{B}_{\Gamma},
\mathcal{B}_{\Gamma})$ (see Theorem 1 in \cite{scts}).

 $(2)\Rightarrow(1)$.  Let
$\{S_i\}\subset \mathcal{S}$ and $S\in \mathcal{S}$. Consider a
topology $\tau$ generated by the family
 $\mathcal{P}=\{f^{-1}(G): G$ is an open set of $\mathbb{R}$ and
 $f\in S\cup \bigcup\limits_{i\in \omega} S_i \}$. A space $Y=(X,\tau)$ is a separable metrizable space
 because $P=S\cup \bigcup\limits_{i\in \omega} S_i$ is a countable dense subset
 of $B(X)$. Note that a function $f\in P$, considered as map from
 $Y$ to $\mathbb{R}$, is a continuous function. Note also that the identity
 map $\varphi$ from $X$ on $Y$, is a Borel bijection. By Corollary
 12 in \cite{busu}, $Y$ is a $QN$-space and, hence, by Corollary
 20 in \cite{tszd}, $Y$ has a property $S_{1}(\mathcal{B}_{\Gamma},
 \mathcal{B}_{\Gamma})$. By Corollary 21 in \cite{tszd}, $B(Y)$ is an $\alpha_2$ space and, hence, $B(Y)$ has property
$S_1(\Gamma_z, \Gamma_z)$ for each $z\in B(Y)$.

We claim that there is a sequence $B=\{b_{i}: i\in\omega\}$ such
that for each $i$, $b_{i}\in S_{i}\subset C(Y)$, and, $S\subset
[B]_{seq}$, i.e. for each $d\in S\subset C(Y)$ there is a sequence
$\{b_{i_k} : k\in \omega \}\subset B$ such that $\{b_{i_k} : k\in
\omega \}$ converges to $d$.

Let $q:\omega \mapsto \omega\times \omega$ be a bijection. Then we
enumerate $\{S_i\}$ as $\{S_{q(i)}\}$. For each $d_n\in S$ there
are sequences $s_{n,m}\subset S_{n,m}$ such that $s_{n,m}$
converges to $d_n$ for each $m\in \omega$. By property
$S_1(\Gamma_z, \Gamma_z)$ of $B(Y)$, there is a sequence
$\{b_{n,m}: m\in\omega\}$ such that for each $m$, $b_{n,m}\in
s_{n,m}$, and, $\{b_{n,m}\}$ converges to $d_n$. Denote
$B=\{b_{n,m} : n,m\in \omega \}$. By construction of $B$, we have
that $S\subset [B]_{seq}$.

Note that $B_1(X)=B(X)$ and $\varphi(B(Y))=\varphi(B_1(Y))=B(X)$.

We claim that $B\in \mathcal{S}$. Let $f\in B(Y)$ and $\{f_k: k\in
\omega\}\subset S$ such that $\{f_k\}$ converge to $f$. For each
$k\in \omega$ there is $\{f^{n}_k : n\in \omega \}\subset B$ such
that $\{f^{n}_k\}$ converge to $f_k$. Since $Y$ is a $QN$-space,
by Theorem 16 in \cite{busu}, there exists an unbounded $\beta\in
\omega^{\omega}$ such that $\{f^{n}_{\beta(n)}\}$ converge to $f$
on $Y$. It follows that $\{f^{n}_{\beta(n)}\}$ converge to $f$ on
$X$ and $[B]_{seq}=B(X)$.

$(2)\Leftrightarrow(3)$. By Theorem \ref{th445} and Theorem
\ref{th37}.

$(3)\Rightarrow(4)$ is immediate.

$(4)\Rightarrow(2)$. Let $B(X)$ $\models$ $S_{1}(\mathcal{S},
\Gamma_x)$ and $B(X)$ be a sequentially separable.  Let
$\{\mathcal{U}_i\}\subset \mathcal{B}_{\Gamma}$ and
$\mathcal{S}=\{h_m\}_{m\in \omega}$ be a countable sequentially
dense subset of $B(X)$. For each $i\in \omega$ we consider a
countable sequentially dense subset $\mathcal{S}_i$ of $B(X)$ and
$\mathcal{U}_i=\{U^{m}_i\}_{m\in \omega}$ where

$\mathcal{S}_i =\{f^m_i\}:=\{ f^m_i\in B(X) : f^m_i\upharpoonright
U^{m}_i=h_m$ and $f^m_i\upharpoonright (X\setminus U^{m}_i)=1$ for
$m \in \omega \}$.

Since $\mathcal{F}_i=\{U^{m}_i\}_{m\in \omega}$ is a
$\gamma$-cover of $X$ and $\mathcal{S}$ is a countable
sequentially dense subset of $B(X)$, we have that $\mathcal{S}_i$
is a  countable sequentially dense subset of $B(X)$ for each $i\in
\omega$.

Let $h\in B(X)$, there is a sequence $\{h_{m_s}\}_{s\in
\omega}\subset \mathcal{S}$ such that $\{h_{m_s}\}_{s\in \omega}$
converge to $h$.
 Let $K$ be a finite subset of $X$, $\epsilon>0$ and $W=<h, K,\epsilon>$ be
a base neighborhood of $h$, then there is a number $m_0$ such that
$K\subset U^{m}_i$ for $m>m_0$ and $h_{m_s}\in W$ for $m_s>m_0$.
Since $f^{m_s}_i\upharpoonright K= h_{m_s}\upharpoonright K$ for
each $m_s>m_0$, $f^{m_s}_i\in W$ for each $m_s>m_0$. It follows
that a sequence $\{f^{m_s}_i\}_{s\in \omega}$ converge to $h$.

By $B(X)$ $\models$ $S_{1}(\mathcal{S}, \Gamma_x)$, there is a
sequence $\{f^{m(i)}_{i}: i\in\omega\}$ such that for each $i$,
$f^{m(i)}_{i}\in \mathcal{S}_i$, and $\{f^{m(i)}_{i}: i\in\omega
\}$ is an element of $\Gamma_0$.

Consider a sequence $\{U^{m(i)}_{i}: i\in \omega\}$.

(a). $U^{m(i)}_{i}\in \mathcal{U}_{i}$.

(b). $\{U^{m(i)}_{i}: i\in \omega\}$ is a $\gamma$-cover of $X$.

Let $K$ be a finite subset of $X$ and $U=<$ $\bf{0}$ $, K,
\frac{1}{2}>$ be a base neighborhood of $\bf{0}$, then there is
$j_0\in \omega$ such that $f^{m(i)_{j}}_{i_{j}}\in U$ for each
$j>j_0$. It follows that $K\subset U^{m(i)_{j}}_{i_{j}}$ for each
$j>j_0$. We thus get $X$ $\models$ $S_{1}(\mathcal{B}_{\Gamma},
\mathcal{B}_{\Gamma})$. By Theorem \ref{th37}, $X$ $\models$ $O$.

$(1)\Rightarrow(5)\Rightarrow(6)$ and
$(4)\Rightarrow(7)\Rightarrow(8)$ are immediate.

$(8)\Rightarrow(2)$. $B(X)$ $\models$
$U_{fin}(\mathcal{S},\Gamma_x)$ $\Rightarrow$  $X$ $\models$
$U_{fin}(\mathcal{B}_{\Gamma}, \mathcal{B}_{\Gamma})$ is proved
similarly to the implication $(4)\Rightarrow(2)$. By Theorem 1 in
\cite{scts},
$S_1(\mathcal{B}_{\Gamma},\mathcal{B}_{\Gamma})\Leftrightarrow
S_{fin}(\mathcal{B}_{\Gamma},\mathcal{B}_{\Gamma})\Leftrightarrow
U_{fin}(\mathcal{B}_{\Gamma},\mathcal{B}_{\Gamma})$.

$(6)\Rightarrow(2)$. Similarly to the implication
$(1)\Rightarrow(2)$.

\end{proof}

\section{$B(X)$ $\models$ $S_{1}(\mathcal{A},\mathcal{A})$}

\begin{definition} A set $A\subseteq B(X)$ will
be called {\it $n$-dense} in $B(X)$, if for each $n$-finite set
$\{x_1,...,x_n\}\subset X$ such that $x_i\neq x_j$ for $i\neq j$
and an open sets $W_1,..., W_n$ in $\mathbb{R}$ there is $f\in A$
such that $f(x_i)\in W_i$ for $i\in \overline{1,n}$.

\end{definition}

Obviously, that if $A$ is a $n$-dense set of $B(X)$ for each $n\in
\omega$ then $A$ is a dense set of $B(X)$.

For a space $B(X)$ we denote:

$\mathcal{A}_n$
--- the family of a countable $n$-dense subsets of $B(X)$.

If $n=1$, then we denote $\mathcal{A}$ instead of $\mathcal{A}_1$.

\begin{definition} Let $f\in B(X)$. A set $B\subseteq B(X)$ will
be called {\it $n$-dense} at point $f$, if for each $n$-finite set
$\{x_1,...,x_n\}\subset X$ and $\epsilon>0$ there is $h\in B$ such
that $h(x_i)\in (f(x_i)-\epsilon, f(x_i)+\epsilon)$ for $i\in
\overline{1,n}$.
\end{definition}

Obviously, that if $B$ is a $n$-dense at point $f$ for each $n\in
\omega$ then $f\in \overline{B}$.

For a space $B(X)$ we denote:

$\mathcal{B}_{n,f}$
--- the family of a countable $n$-dense at point $f$ subsets of $B(X)$.

If $n=1$, then we denote $\mathcal{B}_f$ instead of
$\mathcal{B}_{1,f}$.

 Let $\mathcal{B}$ be a Baire cover of $X$ and $n\in \mathbb{N}$.

$\bullet$ $\mathcal{B}$ is an $n$-cover of $X$ if for each
$F\subset X$ with $|F|\leq n$, there is $B\in \mathcal{B}$ such
that $F \subset B$.


\begin{theorem}\label{th243} For a   space $X$, the following statements are
equivalent:

\begin{enumerate}

\item  $B(X)$ $\models$ $S_{1}(\mathcal{A},\mathcal{A})$;

\item $X$ $\models$ $S_{1}(\mathcal{B}, \mathcal{B})$;

\item $B(X)$ $\models$ $S_{1}(\mathcal{B}_f,\mathcal{B}_f)$;

\item  $B(X)$ $\models$ $S_{1}(\mathcal{A},\mathcal{B}_f)$;





\end{enumerate}

\end{theorem}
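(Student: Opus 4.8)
The plan is to run the function-space/open-cover correspondence of Theorems \ref{th24}--\ref{th35}, adapted to plain Baire covers and to the fact that $\mathcal{A}$ and $\mathcal{B}_f$ consist of \emph{countable} families. First I would record the trivial inclusion $\mathcal{A}\subseteq\mathcal{B}_f$ for every $f\in B(X)$: a countable $1$-dense subset of $B(X)$ is in particular $1$-dense at every point $f$. Hence $(1)\Rightarrow(4)$ and $(3)\Rightarrow(4)$ are immediate (apply the selection hypothesis to a sequence from $\mathcal{A}$ and observe that the output, being in $\mathcal{A}$ resp. in $\mathcal{B}_f$ by hypothesis, lies in $\mathcal{B}_f$). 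So it remains to prove $(4)\Rightarrow(2)$, $(2)\Rightarrow(1)$ and $(2)\Rightarrow(3)$; together with the two immediate implications these yield $(1)\Rightarrow(4)\Rightarrow(2)\Rightarrow(1)$ and $(2)\Rightarrow(3)\Rightarrow(4)$, closing all four into one equivalence class.

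For $(4)\Rightarrow(2)$: given Baire covers $\{\mathcal{U}_n\}_{n\in\omega}$ of $X$, put $A_n=\{q\cdot\chi_U : q\in\mathbb{Q},\ U\in\mathcal{U}_n\}$, a \emph{countable} subset of $B(X)$ (each $\chi_U\in B(X)$ since $U$ is a Baire set). Because each $\mathcal{U}_n$ is a cover, $A_n$ is $1$-dense in $B(X)$: given $x\in X$ and a nonempty open $W\subseteq\mathbb{R}$, choose $q\in W\cap\mathbb{Q}$ and $U\in\mathcal{U}_n$ with $x\in U$, so $(q\chi_U)(x)=q\in W$. Apply $(4)$ at $f=\mathbf{1}$: there are $q_n\chi_{U_n}\in A_n$ with $\{q_n\chi_{U_n}:n\in\omega\}\in\mathcal{B}_{\mathbf{1}}$. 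For each $x\in X$, $1$-density at $\mathbf{1}$ with $\varepsilon=1$ gives $n$ with $(q_n\chi_{U_n})(x)\neq 0$, whence $x\in U_n$; and $U_n\neq X$ as $U_n\in\mathcal{U}_n$. Thus $\{U_n:n\in\omega\}$ is a countable Baire cover of $X$, i.e. $X\models S_1(\mathcal{B},\mathcal{B})$.

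For $(2)\Rightarrow(3)$, by homogeneity of $B(X)$ fix the point as $\mathbf{0}$ and let $\{A_n\}$ be countable sets that are $1$-dense at $\mathbf{0}$. Fix a partition $\omega=\bigsqcup_{k\ge 1}I_k$ into infinite blocks. For $k\ge 1$ and $n\in I_k$ the countable family $\mathcal{V}^{k}_n=\{g^{-1}(-1/k,1/k):g\in A_n\}$ consists of Baire sets and covers $X$ (by $1$-density of $A_n$ at $\mathbf{0}$ at scale $1/k$). If some $n\in I_k$ admits $g\in A_n$ with $g^{-1}(-1/k,1/k)=X$, take $f_n=g$ and choose $f_m\in A_m$ arbitrarily for the other $m\in I_k$; otherwise each $\mathcal{V}^{k}_n$ ($n\in I_k$) is a genuine countable Baire cover and $S_1(\mathcal{B},\mathcal{B})$ supplies $f_n\in A_n$ with $\{f_n^{-1}(-1/k,1/k):n\in I_k\}$ covering $X$. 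Either way, for every $x\in X$ there is $n\in I_k$ with $|f_n(x)|<1/k$. Running over all $k$ defines $f_n\in A_n$ for every $n$, and $\{f_n\}$ is $1$-dense at $\mathbf{0}$: given $x$ and $\varepsilon>0$, use any $k$ with $1/k<\varepsilon$. The proof of $(2)\Rightarrow(1)$ is the same, with blocks $I_k$ indexed by an enumeration $\{J_k:k\in\omega\}$ of the rational open intervals of $\mathbb{R}$: block $I_k$ is used to make $\{f_n^{-1}(J_k):n\in I_k\}$ cover $X$ (via full $1$-density of $A_n$), and the resulting countable $\{f_n\}$ is $1$-dense in $B(X)$ because $(f_n(x))_n$ is then dense in $\mathbb{R}$ for every $x$.

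The main obstacle, and the reason this is not a verbatim copy of Theorems \ref{th24}/\ref{th34}, is that plain covers --- unlike $\omega$-covers and $\gamma$-covers --- are not absorbed by passing to cofinite subfamilies, so a single invocation of $S_1(\mathcal{B},\mathcal{B})$ only controls the selected functions at one fixed scale $1/k$ (or one rational target interval); partitioning $\omega$ into countably many infinite blocks and spending $S_1(\mathcal{B},\mathcal{B})$ once per block is exactly what restores density at all scales. The secondary subtlety is the countability constraint in $\mathcal{A}$ and $\mathcal{B}_f$, which forces the auxiliary family $\{q\chi_U:q\in\mathbb{Q}\}$ instead of the uncountable set of all functions vanishing off $U$; the degenerate case ``$X$ belongs to the auxiliary cover'' is dispatched locally inside each block, so no separate global ``$M$ infinite''-type argument is needed here.
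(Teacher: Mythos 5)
Your proof is correct and follows essentially the same route as the paper's: countable auxiliary families built from rationals and characteristic functions of cover elements for the function-space-to-cover direction, and a double-indexing (your block partition of $\omega$ is the paper's renumbering $\{B_{i,j}\}_{i,j\in\omega}$) to recover density at all scales, respectively at all rational targets, for the converse. The only difference is cosmetic — you close the cycle as $(1)\Rightarrow(4)\Rightarrow(2)\Rightarrow(1)$ and $(2)\Rightarrow(3)\Rightarrow(4)$ where the paper runs $(1)\Rightarrow(2)\Rightarrow(3)\Rightarrow(4)\Rightarrow(1)$ — and the constructions involved are interchangeable.
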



\begin{proof} $(1)\Rightarrow(2)$.  Let $\{\mathcal{B}_n : n\in \omega\}$ be a
sequence of countable Baire covers of $X$. We set $A_n=\{f\in
B(X): f\upharpoonright (X\setminus U)=1$ and $f\upharpoonright
U=q$ for  $U\in \mathcal{B}_n$ and $q\in \mathbb{Q}\}$. It is not
difficult to see that each $A_n$ is countable $1$-dense subset of
$B(X)$ since each $\mathcal{B}_n$ is a cover of $X$.

 By the assumption there exist $f_n\in A_n$ such that
$\{f_n : n\in \omega\}\in \mathcal{A}$.

 For each $f_n$ we
take $U_n\in \mathcal{B}_n$ such that
$f_n\upharpoonright(X\setminus U_n)=1$.

 Set $\mathcal{U}=\{ U_n : n\in \omega\}$. For $x\in X$ we consider the basic open neighborhood
of $\bf{0}$ $[x, W]$, where $W=(-\frac{1}{2},\frac{1}{2})$.

 Note that there is $m\in \omega$ such that
$[x, W]$ contains $f_m\in \{f_n : n\in \omega\}$. This means $x\in
 U_m$. Consequently $\mathcal{U}$ is cover
of $X$.

$(2)\Rightarrow(3)$. Let $B_n\in \mathcal{B}_f$ for each $n\in
\omega$. We renumber $\{B_n\}_{n\in \omega}$ as
$\{B_{i,j}\}_{i,j\in \omega}$.  Since $B(X)$ is homogeneous, we
may think that $f=\bf{0}$.  We set
$\mathcal{U}_{i,j}=\{g^{-1}(-1/i, 1/i) : g\in B_{i,j}\}$ for each
$i,j\in \omega$. Since $B_{i,j}\in \mathcal{B}_0$,
$\mathcal{U}_{i,j}$ is a Baire cover of $X$ for each $i,j\in
\omega$. In case the set $M=\{i\in \omega: X\in \mathcal{U}_{i,j}
\}$ is infinite, choose $g_{m}\in B_{m,j}$ $m\in M$ so that
$g^{-1}(-1/m, 1/m)=X$, then $\{g_m : m\in \omega\}\in
\mathcal{B}_f$.

So we may assume that there exists $i'\in \omega$ such that for
each $i\geq i'$ and $g\in B_{i,j}$ $g^{-1}(-1/i, 1/i)$ is not $X$.

For the sequence $\mathcal{V}_i=\{\mathcal{U}_{i,j} : j\in
\omega\}$ of Baire covers there exist $f_{i,j}\in B_{i,j}$ such
that $\mathcal{U}_i=\{f^{-1}_{i,j}(-1/i,1/i):  j\in \omega\}$ is a
cover of $X$.  Let $[x, W]$ be any basic open neighborhood of
$\bf{0}$, where $W=(-\epsilon, \epsilon)$, $\epsilon>0$. There
exists $m\geq i'$ and $j\in \omega$  such that $1/m<\epsilon$ and
$x\in f^{-1}_{m,j}(-1/m, 1/m)$. This means $\{f_{i,j}: i,j\in
\omega\}\in \mathcal{B}_f$.

$(3)\Rightarrow(4)$ is immediate.

$(4)\Rightarrow(1)$. Let $A_n\in \mathcal{A}$ for each $n\in
\omega$. We renumber $\{A_n\}_{n\in \omega}$ as
$\{A_{i,j}\}_{i,j\in \omega}$. Renumber the rational numbers
$\mathbb{Q}$ as $\{q_i : i\in \omega\}$.  Fix $i\in\omega$. By the
assumption there exist $f_{i,j}\in A_{i,j}$ such that $\{f_{i,j} :
j\in \omega\}\in \mathcal{A}_{q_i}$ where $q_i$  is the constant
function to $q_i$. Then $\{f_{i,j} : i,j\in \omega\}\in
\mathcal{A}$.







\end{proof}

\section{$B(X)$ $\models$ $S_{1}(\mathcal{S},\mathcal{A})$}

Well-known (see in \cite{scts}) that
$S_1(\mathcal{B}_{\Gamma},\mathcal{B})\Leftrightarrow
S_{fin}(\mathcal{B}_{\Gamma},\mathcal{B})\Leftrightarrow
U_{fin}(\mathcal{B}_{\Gamma},\mathcal{B})$.

\begin{theorem}\label{th194} For a Tychonoff space $X$, the following statements are
equivalent:

\begin{enumerate}

\item $B(X)$ $\models$ $S_{1}(\Gamma_x,\mathcal{B}_f)$;

\item $X$ $\models$ $S_{1}(\mathcal{B}_{\Gamma}, \mathcal{B})$.

\end{enumerate}

\end{theorem}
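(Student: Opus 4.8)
The plan is to follow the template of Theorems~\ref{th333} and \ref{th445}: pass between sequences in $B(X)$ converging to the constant function $\mathbf 0$ and countable Baire $\gamma$-covers of $X$ by means of two-valued Baire functions, and between selections that are $1$-dense at $\mathbf 0$ and Baire covers of $X$. I would use homogeneity of $B(X)$ to normalise the relevant point to $\mathbf 0$, and I would first record the elementary reformulation that a countable $B\subseteq B(X)$ lies in $\mathcal{B}_{\mathbf 0}$ (is $1$-dense at $\mathbf 0$) if and only if for every $n\in\omega$ the family $\{g^{-1}(-1/n,1/n):g\in B\}$ covers $X$; this is the bridge between the function-space side and the cover side.

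For $(1)\Rightarrow(2)$ I would start from $\{\mathcal{F}_i:i\in\omega\}\subseteq\mathcal{B}_\Gamma$, write $\mathcal{F}_i=\{F_{i,k}:k\in\omega\}$, and for each $i,k$ choose $f_{i,k}\in B(X)$ with $f_{i,k}\upharpoonright F_{i,k}=0$ and $f_{i,k}\upharpoonright(X\setminus F_{i,k})=1$ (legitimate because $F_{i,k}$ is a Baire set). Since each $\mathcal{F}_i$ is a $\gamma$-cover, $\{f_{i,k}:k\in\omega\}$ converges to $\mathbf 0$, hence belongs to $\Gamma_{\mathbf 0}$ after dropping the indices with $F_{i,k}=X$. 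Applying $S_1(\Gamma_x,\mathcal{B}_f)$ at $\mathbf 0$ gives $f_{i,k(i)}$ with $\{f_{i,k(i)}:i\in\omega\}$ $1$-dense at $\mathbf 0$; setting $U_i=F_{i,k(i)}\in\mathcal{F}_i$, $1$-density at a single point $x$ with $\epsilon=1/2$ forces $f_{i,k(i)}(x)\neq1$, i.e. $x\in U_i$, so $\{U_i:i\in\omega\}$ is a Baire cover of $X$ and $X\models S_1(\mathcal{B}_\Gamma,\mathcal{B})$.

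For $(2)\Rightarrow(1)$ I would take a sequence $\langle A_n:n\in\omega\rangle$ in $\Gamma_{\mathbf 0}$, reindex it as $\langle A_{i,j}:i,j\in\omega\rangle$ with $A_{i,j}=\{f_{i,j,m}:m\in\omega\}$, and for fixed $i,j$ form $\mathcal{V}_{i,j}=\{f_{i,j,m}^{-1}(-1/i,1/i):m\in\omega\}$. Pointwise convergence $f_{i,j,m}\to\mathbf 0$ makes each $\mathcal{V}_{i,j}$ a countable Baire $\gamma$-cover of $X$ (after the usual reduction excluding members equal to $X$, the leftover case being handled by a uniformly convergent subselection as in Theorems~\ref{th333} and \ref{th243}). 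Then, for each fixed $i$ separately, I would apply $S_1(\mathcal{B}_\Gamma,\mathcal{B})$ to $\langle\mathcal{V}_{i,j}:j\in\omega\rangle$ to obtain $m(i,j)$ with $\{f_{i,j,m(i,j)}^{-1}(-1/i,1/i):j\in\omega\}$ a Baire cover of $X$. Putting $b_{i,j}=f_{i,j,m(i,j)}\in A_{i,j}$, the reformulation from the first paragraph yields that $\{b_{i,j}:i,j\in\omega\}$ is $1$-dense at $\mathbf 0$: given $x$ and $\epsilon>0$, choose $i$ with $1/i<\epsilon$ and then $j$ with $x\in b_{i,j}^{-1}(-1/i,1/i)$, so $|b_{i,j}(x)|<\epsilon$. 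Hence $B(X)\models S_1(\Gamma_x,\mathcal{B}_f)$.

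The hard part will be exactly $(2)\Rightarrow(1)$: the target class $\mathcal{B}$ is a plain cover, so, unlike the $\omega$-covers and $\gamma$-covers appearing in Theorems~\ref{th333} and \ref{th445} --- which automatically put each finite set inside cofinally many of their members once finite subcovers are removed --- a Baire cover produced by a single use of $S_1(\mathcal{B}_\Gamma,\mathcal{B})$ need not transport the accuracy $1/i$ to an arbitrary $\epsilon$. The fix is to let the first coordinate of the reindexing control the accuracy and invoke the selection principle once per value of that coordinate; beyond that, only the routine bookkeeping of the degenerate cases ($F_{i,k}=X$ and $f_{i,j,m}^{-1}(-1/i,1/i)=X$) and of the infiniteness of the auxiliary families remains, handled as in the earlier theorems.
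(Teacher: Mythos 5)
Your proposal is correct and takes essentially the same route as the paper: the forward direction uses the same two-valued functions supported off the members of the $\gamma$-covers together with $1$-density at $\mathbf 0$ with $\epsilon=\frac{1}{2}$, and the backward direction uses the same device of reindexing the given sequence as a double array so that one coordinate fixes the accuracy while the selection principle is applied once per accuracy level (the paper uses radii $\frac{1}{i+j}$ and selects over $i$ for each fixed $j$, whereas you use $\frac{1}{i}$ and select over $j$ for each fixed $i$ --- an immaterial difference). Only routine bookkeeping of the degenerate cases differs, and you handle it the same way the paper does.
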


\begin{proof} $(1)\Rightarrow(2)$. Let $\{\mathcal{U}_i\}\subset
 B_{\Gamma}$. For each $i\in \omega$ we consider the
 set $\mathcal{S}_i:=\{ f\in B(X) : f\upharpoonright
U=0$ and $f\upharpoonright (X\setminus U)=1$ for $U\in
\mathcal{U}_i \}$.

Since $\mathcal{U}_i$ is a $\gamma$-cover of $X$, we have that
$\mathcal{S}_i$ converge to ${\bf 0}$, i.e. $\mathcal{S}_i\in
\Gamma_0$ for each $i\in \omega$.

Since  $B(X)$ $\models$ $S_{1}(\Gamma_x,\mathcal{B}_f)$, there is
a sequence $\{f_{i}\}_{i\in\omega}$ such that for each $i$,
$f_{i}\in \mathcal{S}_i$, and $\{f_{i} : i\in\omega\}\in
\mathcal{B}_{\bf 0}$.

Consider $\mathcal{V}=\{U_i : U_i\in \mathcal{U}_i$ such that
$f_i\upharpoonright U_i=0$ and $f_i\upharpoonright (X\setminus
U_i)=1\} $. Let $x\in X$ and $W=[x,(-1,1)]$ be a neighborhood of
$\bf{0}$, then there exists $i_0\in \omega$ such that $f_{i_0}\in
W$ .

 It follows that $x\in U_{i_0}$ and $\mathcal{V}\in \mathcal{B}$. We thus get
$X$ $\models$ $S_{1}(\mathcal{B}_{\Gamma}, \mathcal{B})$.

$(2)\Rightarrow(1)$.  Fix $\{S_n : n\in \omega\}\subset \Gamma_0$.
We renumber $\{S_n : n\in \omega\}$ as $\{S_{i,j}: i,j\in
\omega\}$.

For each $i,j\in \omega$ and $f\in S_{i,j}$, we put
$U_{i,j,f}=\{x\in X : |f(x)|<\frac{1}{i+j}\}$.

Each $U_{i,j,f}$  is a Baire set of $X$. Let $\mathcal{U}_{i,j}=\{
U_{i,j,f} : f\in S_{i,j}\}$. So without loss of generality, we may
assume $U_{i,j,f}\neq X$ for each $i,j\in \omega$ and $f\in
S_{i,j}$.

We can easily check that the condition $S_{i,j}\in \Gamma_0$
implies that $\mathcal{U}_{i,j}$ is a $\gamma$-cover of $X$.

Since  $X$ $\models$ $S_{1}(\mathcal{B}_{\Gamma}, \mathcal{B})$
for each $j\in \omega$ there is a sequence $\{U_{i,j,f_{i,j}} :
i\in \omega\}$ such that for each $i$, $U_{i,j,f_{i,j}}\in
\mathcal{U}_{i,j}$, and $\{U_{i,j,f_{i,j}} : i\in \omega\}\in
\mathcal{B}$. Claim that $\{f_{i,j}:i,j\in \omega\}\in
\mathcal{B}_f$. Let $x\in X$, $\epsilon>0$, and $W=[x,(-\epsilon,
\epsilon)]$ be a base neighborhood of $\bf{0}$, then there exists
$j'\in \omega$ such that $\frac{1}{1+j'}<\epsilon$. It follow that
there is $f_{i,j'}\in \{f_{i,j}:i,j\in \omega\}$ such that
$f_{i,j'}(x)\in (-\epsilon, \epsilon)$.

So $B(X)$ $\models$ $S_{1}(\Gamma_x,\mathcal{B}_f)$.

\end{proof}

\begin{theorem}\label{th173} For a   space $X$, the following statements are
equivalent:

\begin{enumerate}

\item  $B(X)$ $\models$ $S_{1}(\mathcal{S},\mathcal{A})$ and is
sequentially separable;

\item $X$ $\models$ $S_{1}(\mathcal{B}_{\Gamma}, \mathcal{B})$ and
$X$ $\models$ $O$;

\item $B(X)$ $\models$ $S_{1}(\Gamma_x,\mathcal{B}_f)$ and is
sequentially separable;

\item  $B(X)$ $\models$ $S_{1}(\mathcal{S},\mathcal{B}_f)$ and is
sequentially separable.

\end{enumerate}

\end{theorem}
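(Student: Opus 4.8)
The plan is to run the same four-step cycle used for Theorems \ref{th111} and \ref{th444}, namely $(1)\Rightarrow(2)$, $(2)\Leftrightarrow(3)$, $(3)\Rightarrow(4)$, $(4)\Rightarrow(1)$, and to reduce the genuinely new equivalences to facts already proved. The equivalence $(2)\Leftrightarrow(3)$ is immediate: Theorem \ref{th194} gives $B(X)\models S_{1}(\Gamma_x,\mathcal{B}_f)\Leftrightarrow X\models S_{1}(\mathcal{B}_{\Gamma},\mathcal{B})$, and Theorem \ref{th37} gives that $B(X)$ is sequentially separable if and only if $X\models O$; combining these yields $(2)\Leftrightarrow(3)$. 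The implication $(3)\Rightarrow(4)$ is handled exactly as the step $(3)\Rightarrow(4)$ of Theorem \ref{th444}: given countable sequentially dense sets $\{S_i\}_{i\in\omega}$ and $f\in B(X)$, inside each $S_i$ choose a sequence converging to $f$ (which, as a set, lies in $\Gamma_f$), apply $S_{1}(\Gamma_x,\mathcal{B}_f)$ to these sequences to get a selection $g_i\in S_i$ with $\{g_i:i\in\omega\}\in\mathcal{B}_f$, and carry sequential separability over from the hypothesis.

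For $(4)\Rightarrow(1)$ I would imitate the step $(4)\Rightarrow(1)$ of Theorem \ref{th243}. Given $\{S_n\}_{n\in\omega}\subset\mathcal{S}$, reindex it as $\{S_{i,j}:i,j\in\omega\}$ and fix an enumeration $\mathbb{Q}=\{q_i:i\in\omega\}$. For each fixed $i$, apply $S_{1}(\mathcal{S},\mathcal{B}_f)$ to the sequence $\{S_{i,j}:j\in\omega\}$ with $f$ equal to the constant function with value $q_i$, obtaining $g_{i,j}\in S_{i,j}$ with $\{g_{i,j}:j\in\omega\}$ being $1$-dense at that constant function. Then $\{g_{i,j}:i,j\in\omega\}$ is the required selection and it is $1$-dense in $B(X)$: for $x\in X$ and a nonempty open $W\subseteq\mathbb{R}$ pick $q_i\in W$ and $\epsilon>0$ with $(q_i-\epsilon,q_i+\epsilon)\subseteq W$, so that $g_{i,j}(x)\in(q_i-\epsilon,q_i+\epsilon)\subseteq W$ for a suitable $j$. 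Sequential separability is inherited from $(4)$, hence $(1)$ holds.

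The substantial implication is $(1)\Rightarrow(2)$. By Theorem \ref{th37}, sequential separability of $B(X)$ already yields $X\models O$, so only $X\models S_{1}(\mathcal{B}_{\Gamma},\mathcal{B})$ remains. Fix a countable sequentially dense set $\mathcal{S}=\{h_m:m\in\omega\}\subseteq B(X)$ and a sequence $\{\mathcal{U}_i\}_{i\in\omega}$ of Baire $\gamma$-covers, $\mathcal{U}_i=\{U^m_i:m\in\omega\}$. For each $i$ put $\mathcal{S}_i=\{f^m_i:m\in\omega\}$, where $f^m_i\upharpoonright U^m_i=h_m$ and $f^m_i\upharpoonright(X\setminus U^m_i)=1$; since $U^m_i$ is a Baire set and each $h_m$ is Baire, each $f^m_i\in B(X)$. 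As in the proof of Theorem \ref{th444}, each $\mathcal{S}_i$ is sequentially dense: if $\{h_{m_s}\}_{s\in\omega}\subseteq\mathcal{S}$ converges to $h\in B(X)$, then for any finite $K\subset X$ the $\gamma$-cover property of $\mathcal{U}_i$ gives $m_0$ with $K\subseteq U^m_i$ for all $m>m_0$, so $f^{m_s}_i\upharpoonright K=h_{m_s}\upharpoonright K$ for large $s$, whence $\{f^{m_s}_i\}_s$ converges to $h$. Applying $S_{1}(\mathcal{S},\mathcal{A})$ to $\{\mathcal{S}_i\}_{i\in\omega}$ yields $f^{m(i)}_i\in\mathcal{S}_i$ with $\{f^{m(i)}_i:i\in\omega\}\in\mathcal{A}$, and I claim $\{U^{m(i)}_i:i\in\omega\}$ is the desired countable Baire cover: for $x\in X$, $1$-density of $\{f^{m(i)}_i:i\in\omega\}$ applied to $x$ and $W=(-1/2,1/2)$ gives $i$ with $f^{m(i)}_i(x)\in W$, and since $f^{m(i)}_i\equiv1$ off $U^{m(i)}_i$ while $1\notin W$, necessarily $x\in U^{m(i)}_i$. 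As every member of a $\gamma$-cover is a proper subset of $X$, this is a nontrivial cover, so $X\models S_{1}(\mathcal{B}_{\Gamma},\mathcal{B})$.

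The main obstacle I expect is the bookkeeping in $(1)\Rightarrow(2)$: verifying that the glued functions $f^m_i$ genuinely belong to $B(X)$, that the sets $\mathcal{S}_i$ remain sequentially dense — which is precisely where the $\gamma$-cover hypothesis, as opposed to a mere $\omega$-cover, is used — and the correct choice of test neighbourhood avoiding the value $1$. The remaining implications are routine diagonalizations in the spirit of Theorems \ref{th111}, \ref{th243}, \ref{th444}, or direct invocations of Theorems \ref{th194} and \ref{th37}.
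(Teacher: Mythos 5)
Your proposal is correct and follows essentially the same route as the paper: the cycle $(1)\Rightarrow(2)$ via the glued functions $f^m_i$ built from a sequentially dense set and the $\gamma$-covers, $(2)\Leftrightarrow(3)$ by Theorems \ref{th194} and \ref{th37}, $(3)\Rightarrow(4)$ by extracting convergent sequences, and $(4)\Rightarrow(1)$ by a countable diagonalization. The only (harmless) deviations are that in $(4)\Rightarrow(1)$ you diagonalize over the constant rational functions, as in Theorem \ref{th243}, where the paper diagonalizes over a fixed countable sequentially dense set $D$, and that in $(1)\Rightarrow(2)$ you correctly test $1$-density at a single point $x$ rather than at a finite set $K$ as the paper's printed argument does.
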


\begin{proof} $(1)\Rightarrow(2)$. Let $\{\mathcal{U}_i\}\subset \mathcal{B}_{\Gamma}$ and
$\mathcal{S}=\{h_m\}_{m\in \omega}$ be a countable sequentially
dense subset of $B(X)$. Let $\mathcal{U}_i=\{ U^{m}_i\}_{m\in
\omega}$.

For each $i\in \omega$ we consider a countable sequentially dense
subset $\mathcal{S}_i$ of $B(X)$ where

$\mathcal{S}_i =\{ f^m_i\in B(X) : f^m_i\upharpoonright
U^{m}_i=h_m$ and $f^m_i\upharpoonright (X\setminus U^{m}_i)=1$ for
$m \in \omega \}$.

Since $\mathcal{U}_i$ is a $\gamma$-cover of  $X$ and
$\mathcal{S}$ is a countable sequentially dense subset of $B(X)$,
we have that $\mathcal{S}_i$ is a countable sequentially dense
subset of $B(X)$ for each $i\in \omega$.  Let $h\in B(X)$, there
is a sequence $\{h_{m_s}: s\in \omega\}\subset \mathcal{S}$ such
that $\{h_{m_s}\}_{s\in \omega}$ converge to $h$.
 Let $K$
be a finite subset of $X$, $\epsilon>0$ and $W=<h, K,\epsilon>$ be
a base neighborhood of $h$, then there is a number $m_0$ such that
$K\subset U^{m}_i$ for $m>m_0$ and $h_{m_s}\in W$ for $m_s>m_0$.
Since $f^{m_s}_i\upharpoonright K= h_{m_s}\upharpoonright K$ for
each $m_s>m_0$, $f^{m_s}_i\in W$ for each $m_s>m_0$. It follows
that a sequence $\{f^{m_s}_i\}_{s\in \omega}$ converge to $h$.

By $B(X)\in S_{1}(\mathcal{S},\mathcal{A})$, there is a sequence
$\{f^{m(i)}_{i}\}_{i\in\omega}$ such that for each $i$,
$f^{m(i)}_{i}\in \mathcal{S}_i$, and $\{f^{m(i)}_{i}: i\in\omega
\}$ is an element of $\mathcal{A}$.

Consider a sequence $\{U^{m(i)}_{i}: i\in \omega\}$.

(a). $U^{m(i)}_{i}\in \mathcal{U}_{i}$.

(b). $\{U^{m(i)}_{i}: i\in \omega\}$ is a Baire cover of $X$.

Let $K$ be a finite subset of $X$ and $U=<$ $\bf{0}$ $, K,
\frac{1}{2}>$ be a base neighborhood of $\bf{0}$, then there is
$f^{m(i)_{j_0}}_{i_{j_0}}\in U$ for some $j_0\in \omega$. It
follows that $K\subset U^{m(i)_{j_0}}_{i_{j_0}}$. We thus get

$X$ $\models$ $S_{1}(\mathcal{B}_{\Gamma}, \mathcal{B})$.

$(2)\Leftrightarrow(3)$. By Theorem \ref{th194} and Theorem
\ref{th37}.

$(3)\Rightarrow(4)$ is immediate.

$(4)\Rightarrow(1)$. Suppose that $B(X)$ is sequentially separable
and $B(X)$ $\models$ $S_{1}(\mathcal{S},\mathcal{B}_f)$. Let
$D=\{d_n: n\in \omega \}$ be a countable sequentially dense
subspace of $B(X)$. Given a sequence of sequentially dense
subspace of $B(X)$, enumerate it as $\{S_{n,m}: n,m \in \omega
\}$. For each $n\in \omega$, pick $d_{n,m}\in S_{n,m}$ so that
${\{d_{n,m}: m\in \omega\}}\in \mathcal{B}_{d_n}$. Then
$\{d_{n,m}: m,n\in \omega\}\in \mathcal{A}$.

\end{proof}

We can summarize the relationships between considered notions in
next diagrams.

\newpage

{\begin{figure}
\begin{center}

\ingrw{90}{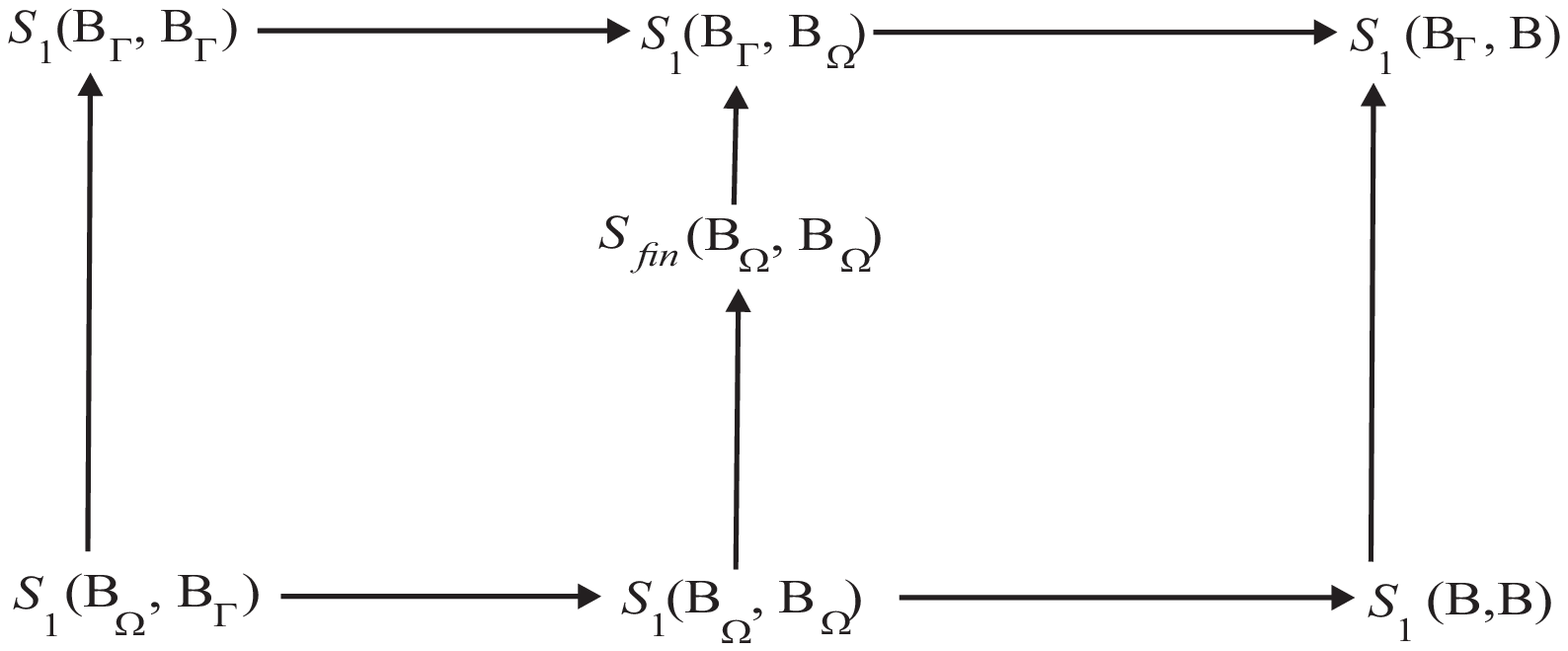}

Figure 2. The Scheepers Diagram in the Baire case.

\end{center}
\end{figure}}

\bigskip
{\begin{figure}
\begin{center} \ingrw{90}{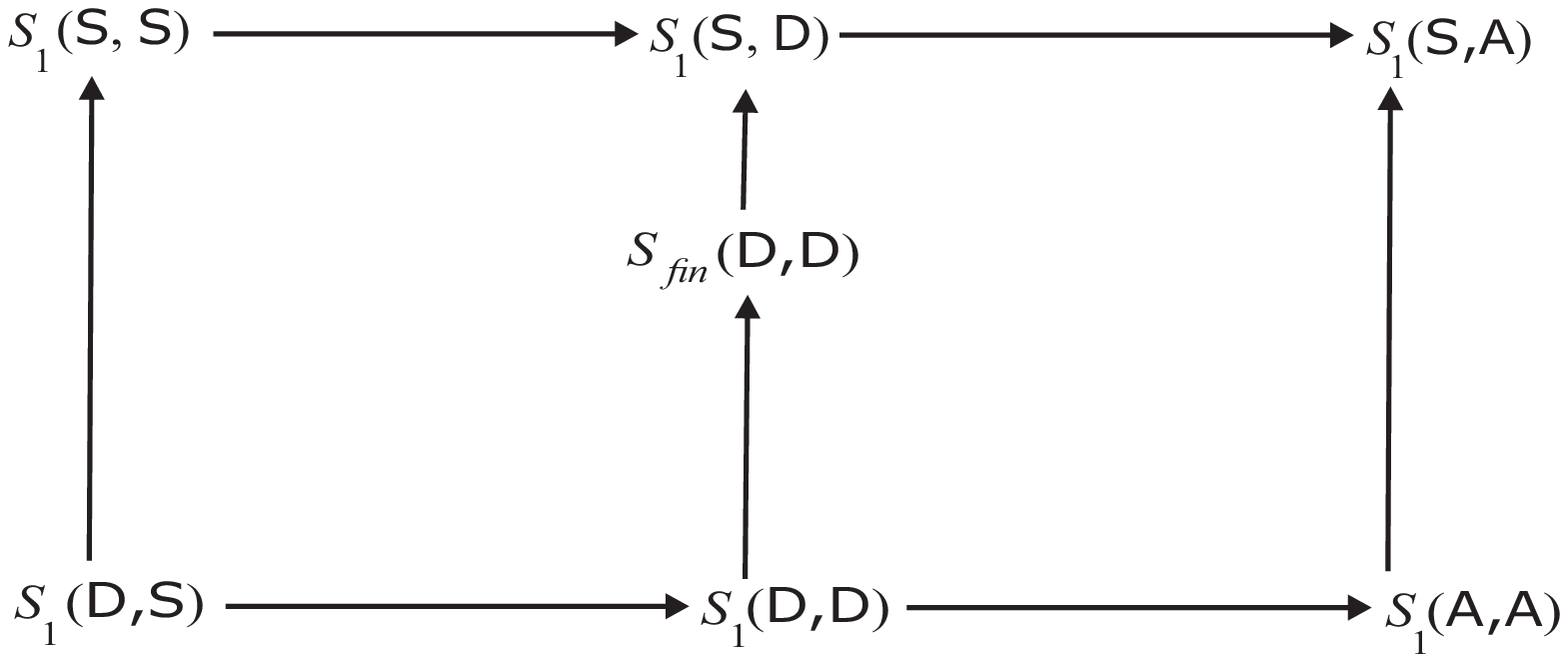}

\medskip

Figure 3. The Diagram of selection principles of selectors for
sequences of countable dense and countable sequentially subsets of
$B(X)$ corresponding to selection principles of Figure 2.

\end{center}

\end{figure}}

\section{Critical cardinalities}

 The critical cardinalities in the Scheepers Diagram in the Baire case are equal to the critical cardinalities of selectors for
sequences of countable dense and countable sequentially subsets of
$B(X)$.

\medskip

$\bullet$ According to Corollary 41 in \cite{scts}, Theorem
\ref{th22} and Theorem \ref{th444} it is consistent that there is
a set of real numbers $X$ such that $B(X)$ with property
$S_{1}(\mathcal{D},\mathcal{D})$, but not property
$S_{1}(\mathcal{S},\mathcal{S})$.

 $\bullet$ According to Theorem 32 in \cite{scts}, Theorem \ref{th243} and Theorem \ref{th111} it is consistent that there is a set
of real numbers $X$ such that $B(X)$ in
$S_{1}(\mathcal{A},\mathcal{A})$ which is not in
$S_{1}(\mathcal{S},\mathcal{D})$.

 $\bullet$ According to Theorem 43 in \cite{scts}, Theorem \ref{th243}, Theorem \ref{th22} and Theorem \ref{th444} it is consistent that there is a set
of real numbers $X$ such that $B(X)$ in
$S_{1}(\mathcal{S},\mathcal{S})$ and not in either of
$S_{fin}(\mathcal{D},\mathcal{D})$ or
$S_{1}(\mathcal{A},\mathcal{A})$.

 $\bullet$ According to Theorem 27 in \cite{scts} and Theorem \ref{th35} the minimal cardinality of a set of
real numbers $X$ such that $B(X)$ not having property
$S_{fin}(\mathcal{D},\mathcal{D})$  is $\mathfrak{d}$ while the
minimal cardinality of a set of real numbers $X$ such that $B(X)$
not having property $S_{1}(\mathcal{A},\mathcal{A})$ is
$cov(\mathcal{M})$. Since it is consistent that $cov(\mathcal{M})<
\mathfrak{d}$, it is consistent that none of the arrows starting
at the bottom row of Fig. 3 is reversible.

\bigskip

For a collection $\mathcal{J}$ of spaces of all Baire functions,
defined on  Tychonoff spaces $X$ with $iw(X)=\aleph_0$,  let
$nonB(\mathcal{J})$ denote the minimal cardinality for $X$ which
$B(X)$ is not a member of $\mathcal{J}$.

\begin{theorem} For a collection $B(X)$ of spaces of all Baire functions,
defined on  Tychonoff spaces $X$ with $iw(X)=\aleph_0$,

(1) $nonB(S_{1}(\mathcal{D},\mathcal{S}))=\mathfrak{p}$.

(2) $nonB(S_{1}(\mathcal{S},\mathcal{S}))=\mathfrak{b}$.

(3)
$nonB(S_{fin}(\mathcal{D},\mathcal{D}))=nonB(S_{1}(\mathcal{S},\mathcal{D}))=
nonB(S_{1}(\mathcal{S},\mathcal{A}))=\mathfrak{d}$.

(4) $nonB(S_{1}(\mathcal{D},\mathcal{D})) =
nonB(S_{1}(\mathcal{A},\mathcal{A}))=cov(\mathcal{M})$.

\end{theorem}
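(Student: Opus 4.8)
The plan is to derive each of the four equalities from the characterization theorems of the preceding sections together with the already-established critical cardinalities of the Borel selection principles, the latter taken from \cite{scts} (and, in the $\gamma$-cover cases, from the theory of $QN$-spaces in \cite{busu} and \cite{tszd}). The common mechanism is this: for a Tychonoff space $X$ with $iw(X)=\aleph_0$ the space $B(X)$ is separable by Theorem \ref{th30}, so each function-space property below becomes equivalent to a covering property of $X$ --- via Theorem \ref{th22} for $S_{1}(\mathcal{D},\mathcal{D})$, Theorem \ref{th35} for $S_{fin}(\mathcal{D},\mathcal{D})$, Theorem \ref{th212} for $S_{1}(\mathcal{D},\mathcal{S})$, Theorem \ref{th444} for $S_{1}(\mathcal{S},\mathcal{S})$, Theorem \ref{th111} for $S_{1}(\mathcal{S},\mathcal{D})$, Theorem \ref{th173} for $S_{1}(\mathcal{S},\mathcal{A})$ and Theorem \ref{th243} for $S_{1}(\mathcal{A},\mathcal{A})$ --- possibly conjoined with the $O$-property. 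I would then record that, since a set of real numbers has $iw=\aleph_0$, and since any space of cardinality below the relevant critical cardinal already satisfies the corresponding Borel covering property (the lower-bound half of the computations in \cite{scts}, which in these cases depends only on $|X|$ and not on metrizability), the value of $non$ of each such covering property restricted to the class $\{X:\ iw(X)=\aleph_0\}$ coincides with its classical value over sets of real numbers computed in \cite{scts}.

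With this in hand the ``easy'' cases are immediate. For (1), Theorem \ref{th212} gives that $B(X)\models S_{1}(\mathcal{D},\mathcal{S})$ iff $X\models S_{1}(\mathcal{B}_{\Omega},\mathcal{B}_{\Gamma})$ (separability being automatic), whence $nonB(S_{1}(\mathcal{D},\mathcal{S}))=non(S_{1}(\mathcal{B}_{\Omega},\mathcal{B}_{\Gamma}))=\mathfrak{p}$. For the two $\mathcal{D}$-cases and the $\mathcal{A}$-case in (3) and (4) no $O$-condition enters (Theorem \ref{th243} carries none at all, and in Theorems \ref{th35} and \ref{th22} the side condition is just $iw(X)=\aleph_0$), so they reduce directly to $non(S_{fin}(\mathcal{B}_{\Omega},\mathcal{B}_{\Omega}))=\mathfrak{d}$, $non(S_{1}(\mathcal{B}_{\Omega},\mathcal{B}_{\Omega}))=cov(\mathcal{M})$ and $non(S_{1}(\mathcal{B},\mathcal{B}))=cov(\mathcal{M})$ from \cite{scts}; this settles $nonB(S_{fin}(\mathcal{D},\mathcal{D}))=\mathfrak{d}$, $nonB(S_{1}(\mathcal{D},\mathcal{D}))=cov(\mathcal{M})$ and $nonB(S_{1}(\mathcal{A},\mathcal{A}))=cov(\mathcal{M})$.

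For the remaining $\mathcal{S}$-indexed cases --- $S_{1}(\mathcal{S},\mathcal{S})$ in (2), and $S_{1}(\mathcal{S},\mathcal{D})$ and $S_{1}(\mathcal{S},\mathcal{A})$ in (3) --- I would first observe that if $B(X)$ is not sequentially separable then $\mathcal{S}=\emptyset$ and the selection principle holds vacuously; hence $B(X)$ can fail such a principle only when $B(X)$ is sequentially separable, that is (Theorem \ref{th37}) only when $X\models O$, and in that case Theorems \ref{th444}, \ref{th111} and \ref{th173} identify the principle with $X\models S_{1}(\mathcal{B}_{\Gamma},\mathcal{B}_{\Gamma})$, $X\models S_{1}(\mathcal{B}_{\Gamma},\mathcal{B}_{\Omega})$ and $X\models S_{1}(\mathcal{B}_{\Gamma},\mathcal{B})$ respectively. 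Thus $nonB$ of each equals $\min\{|X|:\ iw(X)=\aleph_0,\ X\models O,\ X\not\models\Phi\}$ for the appropriate Borel $\gamma$-principle $\Phi$. Taking a minimum over a smaller class can only increase it, so $nonB\ge non(\Phi)$, where $non(S_{1}(\mathcal{B}_{\Gamma},\mathcal{B}_{\Gamma}))=\mathfrak{b}$ (by the $QN$-space results of \cite{busu}, \cite{tszd}) and $non(S_{1}(\mathcal{B}_{\Gamma},\mathcal{B}_{\Omega}))=non(S_{1}(\mathcal{B}_{\Gamma},\mathcal{B}))=\mathfrak{d}$ (by \cite{scts}), giving the lower bounds $\mathfrak{b}$, $\mathfrak{d}$, $\mathfrak{d}$. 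For the matching upper bounds one must produce a witness that additionally satisfies $O$: since $O$ is invariant under Baire isomorphisms and, for a set of real numbers, amounts to being Baire isomorphic to a $\sigma$-set, it suffices to check that the extremal sets of real numbers separating these $\gamma$-principles from the trivial one --- constructed in \cite{scts} and in the $QN$ literature --- can be chosen to be $\sigma$-sets, equivalently, that a minimal-cardinality counterexample to $\Phi$ can be realized inside a $\sigma$-set. Verifying this compatibility of the size-$\mathfrak{b}$ and size-$\mathfrak{d}$ combinatorial witnesses with the $\sigma$-set requirement is the one genuinely delicate point of the argument; the rest is bookkeeping with the equivalences already proved, and once it is in place items (2) and (3) are complete, item (4) having been dealt with above.
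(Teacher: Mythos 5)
Your proposal follows the same route as the paper: the paper's entire proof is precisely the citation chain you describe --- Theorem 27 of \cite{scts} for the critical cardinalities of the Borel covering principles, followed by Theorems \ref{th212}, \ref{th444}, \ref{th35}, \ref{th111}, \ref{th173}, \ref{th22} and \ref{th243} to transfer each value to the corresponding property of $B(X)$. Your treatment of (1), (4) and the $S_{fin}(\mathcal{D},\mathcal{D})$ part of (3) is exactly the paper's argument. The one place where you go beyond the paper is the point you yourself flag: for the $\mathcal{S}$-indexed principles the characterization theorems only identify ``$B(X)\models\Phi$ and sequentially separable'' with ``$X\models\Phi'$ and $X\models O$'', so (reading the principle as vacuously true when $\mathcal{S}=\emptyset$) the minimum defining $nonB$ is taken over spaces satisfying $O$, and the upper bound therefore needs a witness of the critical cardinality that is in addition Baire isomorphic to a $\sigma$-set. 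The paper does not discharge this step at all; it simply writes $non(S_1(\mathcal{B}_\Gamma,\mathcal{B}_\Gamma))=\mathfrak{b}$, hence $nonB(S_1(\mathcal{S},\mathcal{S}))=\mathfrak{b}$, and likewise for the two $\mathfrak{d}$-valued cases in (3). So the ``genuinely delicate point'' you isolate is a real gap relative to a fully rigorous argument, but it is a gap in the paper's own proof as much as in your write-up: to close it one must check that the unbounded (size $\mathfrak{b}$) and dominating-type (size $\mathfrak{d}$) witnesses used in \cite{scts} can be realized inside $\sigma$-sets, or else argue that the paper's $nonB$ is implicitly computed over the class of $X$ with $B(X)$ sequentially separable. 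Apart from that shared caveat, your proof is correct and essentially identical in structure to the paper's.
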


\begin{proof}

By Theorem 27 \cite{scts},

(1) $non(S_1(\mathcal{B}_{\Omega},\mathcal{B}_{\Gamma}
))=\mathfrak{p}$. By Theorem \ref{th212},
$nonB(S_{1}(\mathcal{D},\mathcal{S}))=\mathfrak{p}$.

(2) $non(S_1(\mathcal{B}_{\Gamma} ,\mathcal{B}_{\Gamma}
))=\mathfrak{b}$. By Theorem \ref{th444},
$nonB(S_{1}(\mathcal{S},\mathcal{S}))=\mathfrak{b}$.

(3)
$non(S_{fin}(\mathcal{B}_{\Omega},\mathcal{B}_{\Omega}))=\mathfrak{d}$.
By Theorem \ref{th35},
$nonB(S_{fin}(\mathcal{D},\mathcal{D}))=\mathfrak{d}$.

(4)$non(S_1(\mathcal{B}_{\Gamma}
,\mathcal{B}_{\Omega}))=\mathfrak{d}$. By Theorem \ref{th111},
$nonB(S_{1}(\mathcal{S},\mathcal{D}))=\mathfrak{d}$.

(5)$non(S_1(\mathcal{B}_{\Gamma} ,\mathcal{B}))= \mathfrak{d}$. By
Theorem \ref{th173},
$nonB(S_{1}(\mathcal{S},\mathcal{A}))=\mathfrak{d}$.

(6)
$non(S_1(\mathcal{B}_{\Omega},\mathcal{B}_{\Omega}))=cov(\mathcal{M})$.

By Theorem \ref{th22},
$nonB(S_{1}(\mathcal{D},\mathcal{D}))=cov(\mathcal{M})$.

(7) $non(S_1(\mathcal{B},\mathcal{B}))=cov(\mathcal{M})$.

By Theorem \ref{th243},
$nonB(S_{1}(\mathcal{A},\mathcal{A}))=cov(\mathcal{M})$.
\end{proof}


\bibliographystyle{model1a-num-names}
\bibliography{<your-bib-database>}


\bibliographystyle{plain}





\end{document}